\newcommand{\Z}{\mathbb{Z}}
\newcommand{\Q}{\mathbb{Q}}
\newcommand{\R}{\mathbb{R}}
\newcommand{\C}{\mathbb{C}}
\newcommand{\calF}{\mathcal{F}}
\newcommand{\calO}{\mathcal{O}}
\newcommand{\End}{\text{End}}
\newcommand{\supp}{\text{supp}}
\newcommand{\cspn}{\overline{\text{span}}}
\newcommand{\dist}{\text{dist}}
\newcommand{\nctimes}{\times^\text{nc}}
\newtheorem{thm}{Theorem}[section]
\newtheorem{cor}[thm]{Corollary}
\newtheorem{prop}[thm]{Proposition}
\newtheorem{lem}[thm]{Lemma}
\theoremstyle{definition}
\newtheorem{rem}[thm]{Remark}
\newtheorem{defn}[thm]{Definition}
\newtheorem{eg}[thm]{Example}
\author{Adam Humeniuk}
\address{Adam Humeniuk,
Department of Pure Mathematics,
University of Waterloo, 
200 University Avenue West, 
Waterloo, Ontario, Canada 
N2L 3G1 }
\email{adam.humeniuk@uwaterloo.ca}
\thanks{Author supported by NSERC Alexander Graham Bell Canada Graduate Scholarship-Doctoral.}
\title[C*-envelopes of semicrossed products]{C*-envelopes of semicrossed products by lattice ordered abelian semigroups}
\date{\today}
\begin{document}

\begin{abstract}
A semicrossed product is a non-selfadjoint operator algebra encoding the action of a semigroup on an operator or C*-algebra. We prove that, when the positive cone of a discrete lattice ordered abelian group acts on a C*-algebra, the C*-envelope of the associated semicrossed product is a full corner of a crossed product by the whole group. By constructing a C*-cover that itself is a full corner of a crossed product, and computing the Shilov ideal, we obtain an explicit description of the C*-envelope. This generalizes a result of Davidson, Fuller, and Kakariadis from $\mathbb{Z}_+^n$ to the class of all discrete lattice ordered abelian groups.
\end{abstract}

\subjclass[2010]{47L25, 47L55, 47L65}
\keywords{dynamical systems of operator algebras, crossed product, semicrossed product, Nica-covariant, C*-envelope}

\maketitle

\section{Introduction} \label{sec:intro}

\subsection{Preliminaries}

A semicrossed product is a non-selfadjoint generalization of the crossed product of a C*-algebra by a group. A crossed product $B\rtimes G$ encodes the action of a group $G$ on a C*-algebra $B$, by embedding both into a larger C*-algebra in which the $G$-action is by unitaries. Built similarly, a semicrossed product of a (possibly non-selfadjoint) operator algebra $A$ by an abelian semigroup $P$ encodes a given action of $P$ on $A$ by completely contractive endomorphisms. First introduced by Arveson in \cite{arveson_operator_1967}, and first formally studied by Peters in \cite{peters_semi-crossed_1984} in the case $P=\Z_+$, subsequent work on semicrossed products has focused on conjugacy problems \cites{arveson_operator_1969,davidson_conjugate_2014,davidson_isomorphisms_2008,davidson_operator_2011,davidson_noncommutative_1998,hadwin_operator_1988,kakariadis_representations_2014} and their C*-envelopes \cites{davidson_semicrossed_2017,davidson_dilating_2010,kakariadis_semicrossed_2011,kakariadis_semicrossed_2012,katsoulis_tensor_2006,muhly_tensor_1998}. For a complete survey of the history of semicrossed products, and a thorough discussion of the conjugacy problem, we recommend Davidson, Fuller, and Kakariadis' treatment in \cite{davidson_semicrossed_2018}. For a given action of $P$ on $A$, there are multiple associated semicrossed products $A\times^\calF P$, depending on what family of admissible representations $\calF$ of $P$ one considers. Generally, we have distinct unitary, isometric, and contractive semicrossed products $A\times^\text{un} P$, $A\times^\text{is}P$, and $A\times P$, which satisfy universal properties for ``covariant" contractive/isometric/unitary representations of $P$ with respect to $A$.

Following the programme outlined in \cite{davidson_semicrossed_2017}*{Page 1}, our main question of interest is: If $P$ is a generating subsemigroup of an abelian group $G$, can the C*-envelope of a semicrossed product $A\times^\calF P$ be realized as a full corner of a crossed product $B\rtimes G$ by $G$, for some $G$-C*-algebra $B\supseteq A$? If the action of $P$ on $A$ is by automorphisms, then $A\times^\text{is} P=A\times^\text{un}P$, and the $P$ action extends to $\ast$-automorphisms of the C*-envelope $C_e^\ast(A)$. It follows that
\[
C_e^\ast(A\times^\text{is}P)\cong
C_e^\ast(A)\rtimes G
\]
is a crossed product \cite{davidson_semicrossed_2017}*{Theorem 3.3.1}. If $G=P-P$, and $P$ acts on a C*-algebra $A$ by $\ast$-monomorphisms, then
\begin{equation}\label{eq:envelope_automorphic_extension}
C_e^\ast(A\times^\text{un}P)\cong
\tilde{A}\rtimes G
\end{equation}
is a crossed product for a certain unique minimal C*-algebra $\tilde{A}\supseteq A$ whose $G$-action extends the action of $P$, called the minimal automorphic extension of $A$. Kakariadis and Katsoulis \cite{kakariadis_semicrossed_2012}*{Theorem 2.6} established \eqref{eq:envelope_automorphic_extension} in the case $P=\Z_+$. Laca \cite{laca_endomorphisms_1999} showed how to build the automorphic dilation $\tilde{A}$ in general, and from this Davidson, Fuller, and Kakariadis establish \eqref{eq:envelope_automorphic_extension} in \cite{davidson_semicrossed_2017}*{Theorem 3.2.3}.

Parrott's example \cite{paulsen_completely_2003}*{Chapter 7} of three commuting contractions without a simultaneous isometric dilation, shows that the dilation theory of representations of any semigroup at least as complicated as $\Z_+^3$ is intractable. To make progress, we need to restrict our class of representations $\calF$ if we wish a nice dilation theory for $A\times^\calF P$. Of interest are lattice ordered abelian groups $(G,P)$. These are pairs consisting of a subsemigroup $P$ of a group $G$, where the induced ordering
\[
g\le h\iff 
h-g\in P
\]
makes $G$ a lattice. In the lattice ordered setting, one studies the more tractable class of Nica-covariant representations, first introduced by Nica in \cite{nica_c*-algebras_1992}. Nica-covariance is a $\ast$-commutation type condition which ensures a nice dilation theory. For instance, Li \cites{li_regular_2016,li_regular_2017} showed that every Nica-covariant representation of $P$ has an isometric dilation.

In the Nica-covariant setting, for injective C*-systems \eqref{eq:envelope_automorphic_extension} holds with $A\nctimes P$ in place of $A\times^\text{un}P$. For non-injective systems, it is not possible to embed $A\nctimes P$ into any crossed product $B\rtimes G$ via inclusions $A\subseteq B$ and $P\subseteq G$, because such a system has no faithful unitary covariant pairs. The best one can do is embed $A\nctimes P$ into a \emph{full corner} of a crossed product. For a lattice ordered abelian group $(G,P)$ and an action of $P$ on a C*-algebra $A$, one expects to prove
\begin{equation}\label{eq:envelope_full_corner}
C_e^\ast(A\nctimes P) \cong
p_A(B\rtimes G)p_A,
\end{equation}
is a full corner of a crossed product of some $G$-C*-algebra $B$. Here $A$ embeds into $B$ non-unitally, and $p_A:=1_A$ is the projection coming from the unit in $A$. In the case $(G,P)=(\Z^n,\Z^n_+)$, the result \eqref{eq:envelope_full_corner} was established in the case $n=1$ by Kakariadis and Katsoulis \cites{kakariadis_semicrossed_2011,kakariadis_semicrossed_2012}, and extended to general $n\ge 1$ by Davidson, Fuller, and Kakariadis \cite{davidson_semicrossed_2017}*{Theorem 4.3.7}. Their construction of the $G$-C*-algebra $B$ was in two stages. First, one builds a bigger C*-algebra $B_0\supseteq A$ which has an injective $P$-action dilating the $P$-action on $A$. This is accomplished by a tail-adding technique. Then one takes the minimal automorphic dilation $B:=\tilde{B_0}$.

\subsection{Main Results}

We establish that \eqref{eq:envelope_full_corner} holds for any discrete lattice ordered abelian group $(G,P)$, when $A$ is a C*-algebra (Corollary \ref{cor:envelope_full_corner}). Our approach differs from Davidson, Fuller, and Kakariadis' construction for $P=\Z_+^n$. First, we define a notion of a \emph{Nica-covariant automorphic dilation} of $A$, which is a certain $G$-C*-algebra $B$ with a non-unital embedding $A\subseteq B$. This definition is meant to capture a sufficient set of conditions to get a completely isometric embedding
\[
A\nctimes P\subseteq p_A(B\rtimes G)p_A,
\]
with $p_A:=1_A$. When the dilation $B$ is \emph{minimal}, this is a C*-cover. Then, we show that the Shilov ideal in such a cover has the form $p(I\rtimes G)p$, for a unique maximal $G$-invariant ideal $I\triangleleft B$ not intersecting $A$. Upon taking a quotient by the Shilov ideal,
\[
C_e^\ast(A\nctimes P)\cong
(p_A+I)\left(\frac{B}{I}\rtimes G\right)(p_A+I)
\]
is a full corner of a crossed product. Then it suffices to show that any C*-algebra $A$ with $P$-action has at least one minimal Nica-covariant automorphic dilation. We build one via a direct product construction (Proposition \ref{prop:product_dilation_nc}).

A semicrossed product is a special instance of the tensor algebra of a C*-correspondence \cites{davidson_c*-envelopes_2008,fowler_discrete_1998,kakariadis_contributions_2012,muhly_tensor_1998} (when $P=\Z_+$) or a product system \cites{dinh_discrete_1991,fowler_compactly-aligned_1998,fowler_discrete_2002,fowler_representations_2003,sims_c*-algebras_2007}. Katsoulis and Kribs \cite{katsoulis_tensor_2006} showed that the C*-envelope of the tensor algebra of a C*-correspondence $X$ is the associated Cuntz-Pimsner algebra $\calO_X$, a generalization of the usual crossed product. In \cite{dor-on_tensor_2018}, Dor-On and Katsoulis extend this result and show that the C*-envelope of the Nica tensor algebra $\mathcal{NT}_X^+$ associated to a product system $X$ over $P$ coincides with the associated Cuntz-Nica-Pimsner algebra $\mathcal{NO}_X$ considered by Carlsen, Larsen, Sims, and Vittadello \cite{carlsen_co-universal_2011}, and also coincides with an associated covariance algebra $A\times_X P$ defined by Sehnem \cite{sehnem_c*-algebras_2019}. Our result shows further that, when this product system arises from a C*-dynamical system, this same C*-envelope has the structure of a corner of a crossed product, and so is Morita equivalent to a crossed product.

Before proceeding, we should also direct the reader to the extensive literature on C*-algebras associated to semigroups and semigroup dynamical systems, including \cites{huef_nuclearity_2019,laca_endomorphisms_1999,laca_semigroup_1996,li_semigroup_2012,murphy_crossed_1994,nica_c*-algebras_1992,zahmatkesh_partial-isometric_2017,zahmatkesh_nica-toeplitz_2019}. Following Nica \cite{nica_c*-algebras_1992}, Laca and Raeburn \cite{laca_semigroup_1996} demonstrated for quasi-lattice ordered $(G,P)$ that the universal C*-algebra $C^\ast(G,P)$ for Nica-covariant representations of $P$ has the structure of a semigroup crossed product. Interestingly, we will see (Remark \ref{rem:laca_raeburn}) that our direct product construction of an automorphic dilation reduces to Laca-Raeburn's in the case where $P$ acts on $\C$ trivially.

\subsection{Structure of This Paper} Throughout this section, $(G,P)$ is a (discrete) lattice ordered abelian group, and $P$ acts on a C*-algebra $A$ by $\ast$-endomorphisms. In Section \ref{sec:background}, we review the construction of the semicrossed product, and necessary background on ordered groups and C*-envelopes. Section \ref{sec:main_results} contains our main results. We define the notion of a \emph{minimal Nica-covariant automorphic dilation}, construct such a canonical dilation which we call the \emph{product dilation}, and show that any such dilation always yields a C*-cover of the Nica-covariant semicrossed product $A\nctimes P$ via full corner of a crossed product  (Proposition \ref{prop:automorphic_dilation_embedding}). We show the Shilov ideal arises from a unique maximal $G$-invariant $A$-boundary ideal in any such C*-cover in Theorem \ref{thm:envelope}, and hence show that the C*-envelope of $A\nctimes P$ is a full corner of a crossed product (Corollary \ref{cor:envelope_full_corner}). In two immediate applications, we show that Theorem \ref{thm:envelope} reduces to the known result \eqref{eq:envelope_automorphic_extension} for $A\nctimes P$ in the injective case (Proposition \ref{prop:envelope_injective}), and we compute the unique maximal boundary ideal in the product dilation in the case $P=\Z_+$ (Proposition \ref{prop:envelope_Z}).

Section \ref{sec:Shilov_ideal} is devoted to explicitly computing the Shilov ideal in the C*-cover arising from the product dilation for any Nica-covariant semicrossed product $A\nctimes P$. We do so by describing a unique maximal $G$-invariant boundary ideal $I$ in the product dilation $B$. Then
\[
C_e^\ast(A\nctimes P)\cong
p_A\left(\frac{B}{I}\rtimes G\right)p_A
\]
is a full corner by $p_A:=1_A+I$. Using the explicit construction of $I$ from Section \ref{sec:Shilov_ideal}, in Section \ref{sec:Zn} we show that the $G$-C*-algebra $B/I$ in the case $P=\Z_+^n$ is equivariantly $\ast$-isomorphic to the construction given by Davidson, Fuller, and Kakariadis in \cite{davidson_semicrossed_2017}*{Section 4.3}. So, our description of the C*-envelope reduces to the known result when $P=\Z_+^n$. In Section \ref{sec:applications_examples}, we give some applications both of Theorem \ref{thm:envelope} and the explicit description of $I$ from Section \ref{sec:Shilov_ideal}. In Section 6.1, we establish a simplicity result for the C*-envelope in the commutative case analogous to \cite{davidson_semicrossed_2017}*{Corollary 4.4.9}. In Section 6.2, we show that for totally ordered groups $(G,P)$ which are direct limits of ordered subgroups $(G,P)=\bigcup_\lambda (G_\lambda,P_\lambda)$, such as $\Q=\bigcup_n \Z/n!$, we have
\[
C_e^\ast(A\nctimes P)=
\varinjlim_{\lambda} C_e^\ast(A\nctimes P_\lambda)
\]
naturally, as long as $P$ acts on $A$ by surjections. This result is sharp and fails for non-totally ordered groups and non-surjective actions.

\subsection*{Acknowledgements} The author would like to thank Kenneth Davidson and Matthew Kennedy for their helpful comments and useful discussions. The author is also grateful to the referee and to Evgenios Kakariadis for some helpful suggestions that improved the paper.

\section{Background} \label{sec:background}

In this paper, a (discrete, unital) \textbf{semigroup} $P$ is a set equipped with an associative binary operation, and we require that $P$ contains a two-sided identity element. We are primarily interested in abelian semigroups. In the abelian setting, we will always denote the semigroup operation by $+$ and the identity element by $0$. A semigroup homomorphism is a function between semigroups preserving the semigroup operations and the identity.

If $A$ is a C*-algebra, an \textbf{ideal} $I\triangleleft A$ always means a closed, two-sided ideal. We make frequent use of the following two inductivity properties of ideals in C*-algebras. Firstly, if \[
A=
\overline{\bigcup_{\lambda \in \Lambda}A_\lambda} \cong
\varinjlim_{\lambda \in \Lambda}A_\lambda
\]
is an internal direct limit of C*-subalgebras $A_\lambda$, and $I\triangleleft A$ is an ideal, then
\[
I=
\overline{\bigcup_{\lambda \in \Lambda}I\cap A_\lambda}.
\]
In particular, $I=\{0\}$ if and only if $I\cap A_\lambda=\{0\}$ for all $\lambda\in \Lambda$. Secondly, if $\{I_\lambda \mid \lambda \in \Lambda\}$ is a family of ideals in $A$ that is directed under inclusion, then $I:=\overline{\bigcup_{\lambda \in \Lambda}I_\lambda}$ is also an ideal in $A$.

Let $P$ be a semigroup. An \textbf{(operator algebra) dynamical system} $(A,\alpha,P)$ over $P$ consists of an operator algebra $A$ and a semigroup action $\alpha$ of $P$ on $A$ by completely contractive algebra endomorphisms. That is, there is a distinguished (unital) semigroup homomorphism
\[
p\mapsto \alpha_p:P\to \End(A).
\]
We do not require the $\alpha_p$ to be automorphisms. We will say that $(A,\alpha,P)$ is \textbf{injective/surjective/automorphic} if each $\alpha_p$ is injective/surjective/automorphic. When $A$ has an identity $1_A$ and each $\alpha_p$ is unital, we call $(A,\alpha,P)$ a \textbf{unital} dynamical system. If $A$ is a C*-algebra, and hence each $\alpha_p$ is an $\ast$-endomorphism, then $(A,\alpha,P)$ is a \textbf{C*-dynamical system}.

Let $G$ be an abelian group. A subsemigroup $P\subseteq G$ is a \textbf{positive cone} if $P\cap (-P)=\{0\}$, and a \textbf{spanning cone} if in addition $G=P-P$. Any positive cone $P\subseteq G$ induces a partial order on $G$ by defining
\[
g\le h \iff
h-g \in P.
\]
This ordering respects the group operation $+$. A \textbf{lattice ordered abelian group} $(G,P)$ consists of an abelian group $G$ and a spanning cone $P\subseteq G$ such that the partial order $\le$ induced by $P$ on $G$ makes $G$ into a lattice. That is, for any $g,h\in G$, the $\{g,h\}$ has a least upper bound $g\vee h$ and a greatest lower bound $g\wedge h$. If $(G,P)$ is a lattice ordered abelian group, we also refer to $P$ as a \textbf{lattice ordered abelian semigroup}.

\begin{eg}
The pair $(\Z^n,\Z^n_+)$ forms a lattice ordered abelian group. Here, a  dynamical system $(A,\alpha,\Z^n_+)$ consists of a choice of $n$ commuting completely contractive endomorphisms of $A$, which we usually just write as $\alpha_1,\ldots,\alpha_n\in \End(A)$.
\end{eg}

\begin{eg}
Any \textbf{totally ordered} group $(G,P)$ is automatically lattice ordered. For instance, $(\Q,\Q_+)$ and $(\R,\R_+)$ are both totally ordered groups. If $P\subseteq \Z^n$ is the set of elements larger than $(0,\ldots,0)$ in the lexicographic ordering of $\Z^n$, then $(\Z^n,P)$ is totally ordered, and the induced ordering is lexicographic.
\end{eg}

A representation $T:P\to B(H)$ is a (unital) semigroup homomorphism, and we usually write $T(p)=T_p$. The representation $T$ is \textbf{contractive/isometric/unitary} whenever each $T_p$ is contractive/isometric/unitary. If $(G,P)$ is a lattice ordered group, a contractive representation $T:P\to B(H)$ is \textbf{Nica-covariant} if whenever $p,q\in P$ satisfy $p\wedge q=0$, we have $T_p T_q^\ast=T_q^\ast T_p$, so $T_p$ and $T_q$ not only commute, but $\ast$-commute \cite{nica_c*-algebras_1992}. If $V:P\to B(H)$ is an isometric representation, $V$ is Nica-covariant if and only if
\[
V_pV_p^\ast \, V_qV_q^\ast =
V_{p\vee q}V_{p\vee q}^\ast.
\]
That is, the range projections of the $V_p$'s give a lattice homomorphism $P\to \text{proj}(H)$. A representation $T$ of $\Z_+^n$ is Nica-covariant if and only if the generators $T_1,\ldots,T_n$ $\ast$-commute, and in this case we can find a simultaneous dilation to isometries $V_1,\ldots,V_n$, which yield an isometric Nica-covariant representation $V$ that dilates $T$ \cite{davidson_semicrossed_2017}*{Theorem 2.5.10}. More generally, for any lattice ordered abelian semigroup $P$, any contractive Nica-covariant representation $T:P\to B(H)$ has an isometric Nica-covariant co-extension. For any lattice ordered abelian semigroup $P$, Li \cite{li_regular_2016} showed that any Nica-covariant representation $T:P\to B(H)$ extends to a completely positive definite function on the whole group, and so $T$ co-extends to an isometric Nica-covariant representation of $P$ by \cite{davidson_semicrossed_2017}*{Theorem 2.5.10}

Let $(A,\alpha,P)$ be a dynamical system over an abelian semigroup $P$. A \textbf{covariant pair} 
\[
(\pi,T):(A,P)\to B(H)
\]
consists of a $\ast$-homomorphism $\pi:A\to B(H)$, and a representation $T:P\to B(H)$, such that, if $a\in A$ and $p\in P$,
\begin{equation}\label{eq:covariant_pair}
\pi(a)T_p=
T_p\pi(\alpha_p(a)).
\end{equation}
We say $(\pi,T)$ is \textbf{unitary/isometric/contractive/Nica-covariant} when $T$ is so. Let $\calF$ be a sufficiently well behaved family of representations of $P$ on Hilbert space (cf. \cite{davidson_semicrossed_2017}*{Definition 2.1.1}). For our purposes, $\calF$ is always one of ``$\text{un}$" (unitary representations), ``$\text{is}$" (isometric representations), ``$\text{c}$" (contractive representations), or when $P$ is a lattice ordered abelian semigroup, ``$\text{nc}$" (Nica-covariant representations). 

The \textbf{semicrossed product}
\[
A\times_\alpha^\calF P
\]
is an operator algebra defined by the following universal property \cite{davidson_semicrossed_2017}*{Section 3.1}. There is a covariant pair $(i,v):(A,P)\to A\times_\alpha^\calF P$ such that whenever $(\pi,T):(A,P)\to B(H)$ is a covariant pair with $T\in \calF$, there is a unique completely contractive homomorphism
\begin{equation}\label{eq:semicrossed_covariance}
\pi\times T:A\times_\alpha^\calF P\to B(H)
\end{equation}
with $(\pi\times T)\circ i = \pi$ and $(\pi\times T)\circ v=T$. Concretely, $A\times_\alpha^\calF P$ is densely spanned by formal monomials $v_p a$, for $p\in P$ and $a\in A$, which satisfy the relation
\[
(v_p a)\cdot (v_qb) =
v_{p+q}\alpha_q(a)b.
\]
Indeed, one can construct $A\times^\calF_\alpha P$ by starting with the algebraic tensor product
\[
\C[P]\odot A,
\]
defining a multiplication relation
\[
(\delta_p\otimes a)\cdot(\delta_q\otimes b)=
\delta_{p+q}\otimes (\alpha_q(a)b),
\]
and completing in the universal operator algebra norm defined by
\[
\|X\|:=
\sup\left\{\left\|(\pi\times T)^{(n)}(X)\right\|\;\middle|\; (\pi,T) \text{ is a covariant pair with }T\in \calF\right\},
\]
for any $X\in M_n(\C[P]\odot A)$. Here $\pi\times T:\delta_p\otimes a\mapsto T_p\pi(a)$ defines a homomorphism on $\C[P]\odot A$. When the action $\alpha$ is clear, we usually just write $A\times^\calF P$. We do not omit $\calF$, because it is standard that
\[
A\times_\alpha P:=
A\times_\alpha^\text{c} P
\]
always denotes the contractive semicrossed product. Note that commutativity of the operation in $P$ is used to prove the multiplication on $\C[P]\odot A$ is associative.

We are primarily interested in the Nica-covariant semicrossed product $A\nctimes P$, when $(G,P)$ is a lattice ordered abelian group. By \cite{davidson_semicrossed_2017}*{Proposition 4.2.1} and \cite{li_regular_2016}, $A\nctimes P$ is also universal for \emph{isometric} Nica-covariant pairs, and these completely norm the algebra $A\nctimes P$. In fact, there is a distinguished isometric Nica-covariant pair for any pair $(A,P)$. Let $\pi:A\to B(H)$ be any completely isometric representation. Define a pair $(\tilde{\pi},V):(A,P)\to B(H\otimes \ell^2(P))$ by
\[
\tilde{\pi}(a)(x\otimes \delta_p) =
\alpha_p(a)x\otimes \delta_p,\quad
V_q(x\otimes \delta_p) =
x\otimes \delta_{p+q}.
\]
Then $(\tilde{\pi},V)$ is an isometric Nica-covariant pair, and we call $\tilde{\pi}\times V:A\nctimes P\to B(H)$ the \textbf{Fock representation} (induced by $\pi$) for $A\nctimes P$. By \cite{davidson_semicrossed_2017}*{Theorem 4.2.9}, any Fock representation is completely isometric. This is a key tool which makes it easy to prove that $A\nctimes P$ embeds completely isometrically into a crossed product.

Let $G$ be an abelian group and $(B,\beta,G)$ a C*-dynamical system over $G$. In this paper, we use the nonstandard convention that the crossed product $B\rtimes_\beta G$ is the universal C*-algebra generated by monomials
\[
u_ga,\quad 
g\in G, a\in B
\]
satisfying $u_ga\cdot u_hb=u_{g+h}\beta_h(a)b$, or when $B$ is unital,
\[
u_g^\ast au_g =
\beta_g(a).
\]
Usually one takes the convention that $u_g a u_g^\ast = \beta_g(a)$. This backwards convention is only valid because $G$ is abelian, so $g\mapsto u_g^\ast$ defines a unitary representation of $G$. Clearly this construction is isomorphic to the usual crossed product, so we lose no generality. What we gain is an alignment with the semicrossed covariance relations \eqref{eq:covariant_pair} and \eqref{eq:semicrossed_covariance}. Indeed
\[
B\rtimes_\beta G\cong
B\times^\text{un}_\beta G\cong
B\times^\text{is}_\beta G
\]
is also a semicrossed product, and a C*-algebra with the obvious $\ast$-structure. As with semicrossed products, we usually write $B\rtimes G$ when the action $\beta$ is clear.

Generally, for any dynamical system $(A,\alpha,P)$, the semicrossed product $A\times_\alpha^\calF P$ is a (non-selfadjoint) operator algebra, even when $A$ is a C*-algebra. Let $A$ be any operator algebra. A \textbf{C*-cover} $\varphi:A\to B$ for $A$ consists of a C*-algebra $B$, and a unital completely isometric homomorphism $\varphi$ such that $B=C^\ast(\varphi(A))$. The \textbf{C*-envelope} $C_e^\ast(A)$ is a co-universal or terminal C*-cover $\iota:A\to C_e^\ast(A)$. That is, whenever $\varphi:A\to B$ is a C*-cover, there is a $\ast$-homomorphism $\pi:B\to C_e^\ast(A)$ such that
\[\begin{tikzcd}
    A \arrow[r,"\varphi"] \arrow[dr,swap,"\iota"]& B \arrow[d,"\pi"] \\
    & C_e^\ast(A)
\end{tikzcd}\]
commutes. The homomorphism $\pi$ is necessarily unique and surjective. The C*-envelope exists and is unique up to a $\ast$-homomorphism fixing $A$ \cite{hamana_injective_1979}. In fact, it can be produced from any C*-cover. If $\varphi:A\to B$ is a C*-cover, a \textbf{boundary ideal} $I\triangleleft B$ is an ideal such that the quotient map $q:B\to B/I$ is completely isometric on $\varphi(A)$. Existence of the C*-envelope implies that there is a unique maximal boundary ideal in $B$ for $A$ called the \textbf{Shilov ideal}, and $q\varphi:A\to B\to B/I$ is a C*-envelope for $A$ \cite{arveson_subalgebras_1969}.

\section{Main Results} \label{sec:main_results}

Let $(G,P)$ be a lattice ordered abelian group, and let $(A,\alpha,P)$ be a unital C*-dynamical system over $P$. Our goal is to embed the Nica-covariant semicrossed product $A\nctimes_\alpha P$ into a crossed product $B\rtimes_\beta G$. Here, $A$ should be a C*-subalgebra of $B$ and the action $\beta$ of $G$ on $A$ should extend or dilate the action $\alpha$ of $P$. Write
\[
B\rtimes_\beta G =\cspn\{u_g b\mid g\in G,\, b\in B\}.
\]
We might hope to embed $A\nctimes P$ in $B\rtimes G$ via a map of the form $\iota\times u$, where $\iota:A\to B$ is some unital $\ast$-monomorphism that intertwines $\alpha$ and $\beta$. However, this is impossible whenever any $\alpha_p$ has kernel. Indeed, if $a\in \ker \alpha_p\subseteq A$ is nonzero, then we would require $\iota(a)\ne 0$, but
\[
0 =
\iota(\alpha_p(a)) =
u_p^\ast \iota(a)u_p.
\]
This is impossible when $u_p$ is unitary.

In the non-injective case, the best we can do is embed $A\nctimes P$ into a corner of $B\rtimes G$. We do this by taking a nonunital embedding $\iota:A\to B$. Then, $p_A:=\iota(1_A)$ is a projection in $B$. Consequently,
\[
up_A:p\mapsto u_pp_A
\]
defines an isometric representation of $P$ in the corner $p_A(B\rtimes G)p_A$. The following definition is meant to capture a set of sufficient conditions for $(\iota,up_A)$ to be a Nica-covariant covariant pair, and give an embedding $\iota\times up_A:A\nctimes P\to p_A(B\rtimes G)p_A$ (Proposition \ref{prop:automorphic_dilation_embedding}).

\begin{defn}\label{def:automorphic_dilation}
Let $(G,P)$ be a lattice ordered abelian group. Suppose $(A,\alpha,P)$ is a C*-dynamical system over $P$. An \textbf{automorphic dilation} $(B,\beta,G)$ is a C*-dynamical system $(B,\beta,G)$ together with
\begin{itemize}
\item [(1)] a $\ast$-monomorphism $\iota:A\to B$, such that
\item [(2)] for all $a,b\in A$ and $p\in P$,
\[
\iota(a)\beta_p(\iota(b))=
\iota(a\alpha_p(b)).
\]
\end{itemize}
By taking adjoints we have that $\beta_p(\iota(A))\iota(b)=\iota(\alpha_p(a)b)$. Moreover we say that $(B,\beta,G)$ is a \textbf{Nica-covariant} automorphic dilation if in addition
\begin{itemize}
\item [(3)] for all $a,b\in A$ and $g,h\in G$, we have
\[
\beta_g(\iota(a))\,\beta_h(\iota(b)) =
\beta_{g\wedge h}(\iota(\alpha_{g-g \wedge h}(a)\alpha_{h-g\wedge h}(b))).
\]
\end{itemize}
We call an automorphic dilation $(B,\beta,G)$ \textbf{minimal} if
\begin{itemize}
\item [(4)] $\iota(A)$ generates $B$ as a $G$-C*-algebra, i.e.
\[
B=
C^\ast\left(\bigcup_{g\in G}\beta_g\iota(A)\right).
\]
\end{itemize}
\end{defn}

We are primarily concerned with minimal Nica-covariant automorphic dilations, which satisfy all of (1)-(4). Note that if the automorphic dilation $(B,\beta,G)$ is both minimal and Nica-covariant then property (3) above implies that
\[
\sum_{g\in G} \beta_g\iota(A)
\]
is a $\ast$-subalgebra, and hence
\[
B=
\overline{\sum_{g\in G} \beta_g\iota(A)}.
\]
We are also mostly concerned with the unital case.

\begin{rem}\label{rem:unital_automorphic_dilation}
Let $(G,P)$ be a lattice ordered abelian group, and let $(A,\alpha,P)$ be a unital C*-dynamical system. Suppose $(B,\beta,G)$ is an automorphic C*-dynamical system, with a (possibly nonunital) $\ast$-monomorphism $\iota:A\to B$. Setting $p_A:=\iota(1_A)$, it is straightforward to check that properties (2) and (3) in Definition \ref{def:automorphic_dilation} are equivalent to:
\begin{itemize}
\item [(2')] For all $a\in A$ and $p\in P$,
\[
p_A\beta_p(\iota(a))=\iota(\alpha_p(a))\quad (=\beta_p(\iota(a))p_A),
\]
and
\item [(3')] for all $g,h\in G$,
\[
\beta_g(p_A)\beta_h(p_A)=\beta_{g\wedge h}(p_A).
\]
\end{itemize}
Clearly if (2) and (3) hold, so do (2') and (3'). Conversely, if both (2') and (3') hold, then (2) holds because $\iota(a)=\iota(a)p_A$. Given $a,b\in A$ and $g,h\in G$, using both (2') and (3') we find
\begin{align*}
\beta_g(\iota(a))\beta_h\iota(b)) &=
\beta_g(\iota(a))(\beta_g(p_A)\beta_h(p_A))\beta_h\iota(b)) \\ &=
\beta_g(\iota(a))\beta_{g\wedge h}(p_A)\beta_h(\iota(b)) \\ &=
\beta_{g\wedge h}(\beta_{g-g\wedge h}(a)p_A\beta_{h-g\wedge h}(b)) \\ &=
\beta_{g\wedge h}(\iota(\alpha_{g-g\wedge h}(a)\alpha_{h-g\wedge h}(b))),
\end{align*}
showing (3) holds.
\end{rem}

The reason we assign property (3) the name ``Nica-covariant" is because in the unital case, the identity $\beta_g(p_A)\beta_h(p_A)=\beta_{g\wedge h}(p_A)$ ensures that the isometric semigroup representation $p\mapsto u_pp_A\in p_A(B\rtimes G)p_A$ is Nica-covariant. Indeed,
\[
(u_pp_A)(u_pp_A)^\ast =
u_p p_A u_p^\ast =
\beta_{-p}(p_A).
\]
So if (3) holds, the element $(u_pp_A)(u_pp_A)^\ast \cdot (u_qp_A)(u_qp_A)^\ast$ equals
\[
\beta_{-p}(p_A)\beta_{-q}(p_A) =
\beta_{(-p)\wedge (-q)}(p_A) =
\beta_{- p\vee q}(p_A) =
(u_{p\vee q} p_A)(u_{p\vee q}p_A)^\ast.
\]
In a minimal Nica-covariant automorphic dilation, the projections $\beta_p(p_A)$ for $p\in P$ are central and in fact form an approximate identity.

\begin{lem}\label{lem:p_A_approx_identity}
Let $(G,P)$ be a lattice ordered abelian group. Suppose $(A,\alpha,P)$ is a unital C*-dynamical system, with $(B,\beta,G)$ a minimal Nica-covariant automorphic dilation. Considering $P$ (a lattice) as a directed set, the net
\[
(\beta_p(p_A))_{p\in P}
\]
is an increasing approximate identity for $B$, consisting of central projections.
\begin{proof}
Suppose $p\le q$ in $P$. Then
\[
\beta_p(p_A)\beta_q(p_A)=
\beta_{p\wedge q}(p_A) =
\beta_p(p_A).
\]
Therefore $\beta_p(p_A)\le \beta_q(p_A)$, since both are projections. These projections are central, because for an element of the form $\beta_g\iota(a)$, where $a\in A$ and $g\in G$, we have
\[
\beta_p(p_A)\beta_g(\iota(a))=
\beta_{p\wedge g}\iota(\alpha_{g-p\wedge g}(a)) =
\beta_g(\iota(a))\beta_p(p_A).
\]
Here, we have used property (3) in Definition \ref{def:automorphic_dilation} and the fact that each $\alpha_p$ fixes $1_A$. Since $(G,P)$ is lattice ordered, any element $g\in G$ is dominated by an element $p=g\vee 0\in P$. Further, when $p\ge g$, we have $p\wedge g=g$ and the same computation shows
\[
\beta_p(p_A)\beta_g\iota(a) =
\beta_g\iota(a).
\]
Thus $(\beta_p(p_A))_{p\in P}$ is an approximate identity for $\beta_g\iota(A)$, which commutes with $\beta_g\iota(A)$. Since the automorphic dilation $(B,\beta,G)$ is minimal,
\[
B=\overline{\sum_{g\in G}\beta_g\iota(A))}.
\]
Thus each $\beta_p(p_A)$ is central. Since the net $(\beta_p(p_A))_p$ is norm-bounded, a standard $\varepsilon/3$ argument shows it is an approximate identity on all of $B$.
\end{proof}
\end{lem}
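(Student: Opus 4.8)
The plan is to verify the three structural claims about the net $(\beta_p(p_A))_{p\in P}$ in turn — that each entry is a projection, that the net is increasing, and that each entry is central — and then to upgrade this to the approximate-identity statement using minimality together with a norm-boundedness argument. Each step is essentially bookkeeping with the lattice operations of $(G,P)$ and property (3) of Definition \ref{def:automorphic_dilation}, so I do not expect a genuine obstacle; the only subtle points are noticing where lattice-orderedness (rather than a bare positive cone) enters, and that one must work inside the dense $\ast$-subalgebra $\sum_{g\in G}\beta_g\iota(A)$ rather than all of $B$ directly.

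First, $p_A=\iota(1_A)$ is a self-adjoint idempotent and each $\beta_p$ is a $\ast$-automorphism, so $\beta_p(p_A)$ is a projection for every $p\in P$. For monotonicity, suppose $p\le q$ in $P$; then $p\wedge q=p$, and property (3') of Remark \ref{rem:unital_automorphic_dilation} gives $\beta_p(p_A)\beta_q(p_A)=\beta_{p\wedge q}(p_A)=\beta_p(p_A)$. Since for projections $e,f$ one has $ef=e$ precisely when $e\le f$, this yields $\beta_p(p_A)\le\beta_q(p_A)$, so the net is increasing.

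For centrality, I would first invoke the observation recorded just after Definition \ref{def:automorphic_dilation} that, when the dilation is both minimal and Nica-covariant, $\sum_{g\in G}\beta_g\iota(A)$ is a $\ast$-subalgebra whose closure is $B$. Hence it suffices to commute each $\beta_p(p_A)$ past a generator $\beta_g(\iota(a))$, $a\in A$, $g\in G$. Applying property (3) to the pair $1_A,a$ with indices $p,g$, and using that the system is unital so $\alpha_r(1_A)=1_A$ for all $r\in P$, I obtain $\beta_p(p_A)\beta_g(\iota(a))=\beta_{p\wedge g}(\iota(\alpha_{g-p\wedge g}(a)))$. Running the same computation with the two factors reversed gives exactly the same element for $\beta_g(\iota(a))\beta_p(p_A)$, so $\beta_p(p_A)$ commutes with everything in $\sum_{g}\beta_g\iota(A)$, hence with all of $B$.

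Finally, for the approximate-identity property, fix $g\in G$. Lattice-orderedness ensures $g\vee 0\in P$, and for any $q\in P$ with $q\ge g\vee 0$ we have $q\ge g$, so $q\wedge g=g$ and the centrality computation collapses to $\beta_q(p_A)\beta_g(\iota(a))=\beta_g(\iota(\alpha_0(a)))=\beta_g(\iota(a))$. Thus the net eventually fixes every generator $\beta_g(\iota(a))$, hence eventually fixes every element of the dense subalgebra $\sum_g\beta_g\iota(A)$; since $\|\beta_p(p_A)\|\le 1$ for all $p$, the usual $\varepsilon/3$ estimate extends this to $\lim_p\beta_p(p_A)b=b$ for every $b\in B$. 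The only point requiring slight care is that the reduction to the dense subalgebra $\sum_g\beta_g\iota(A)$ is available only because property (3) forces this sum to be a subalgebra; with minimality alone one would be stuck arguing directly in $B=C^\ast(\bigcup_g\beta_g\iota(A))$, which is a priori harder to handle.
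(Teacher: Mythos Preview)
Your proposal is correct and follows essentially the same approach as the paper's proof: both use property (3) (or its unital reformulation (3')) to verify monotonicity and centrality on generators $\beta_g\iota(a)$, then invoke minimality and a bounded-net $\varepsilon/3$ argument to pass from the dense $\ast$-subalgebra $\sum_g\beta_g\iota(A)$ to all of $B$. Your additional remarks about where Nica-covariance is needed to make that sum a $\ast$-subalgebra are accurate and in the spirit of the paper's observation following Definition \ref{def:automorphic_dilation}.
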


The key observation is that any C*-dynamical system over a lattice ordered abelian semigroup admits a minimal Nica-covariant automorphic dilation. In fact, we can build one with an infinite product construction.

\begin{defn}\label{def:product_dilation}
Let $(G,P)$ be a lattice ordered abelian group, and $(A,\alpha,P)$ a C*-dynamical system. We construct a minimal Nica-covariant automorphic dilation as follows. Define the $\ast$-monomorphism
\[
\iota:A\to \prod_G A
\]
by
\[
\iota(a)_g=
\begin{cases}
    \alpha_g(a) & g\in P, \\
    0 & g\not \in P.
\end{cases}
\]
Throughout, $[x]_g$ or simply $x_g$ always denotes the $g$'th element of a tuple $x\in \prod_GA$. Then, $G$ acts on $\prod_G A$ by the ``left-shift" $\beta:G\to \End(\prod_GA)$, where
\[
[\beta_g(x)]_h=
x_{h+g}.
\]
Set
\begin{equation}\label{eq:B_def}
B:=
C^\ast\left(\bigcup_{g\in G} \beta_g\iota(A)\right) =
\overline{\sum_{g\in G} \beta_g\iota(A)}.
\end{equation}
Then, $(B,\beta,G)$ is a minimal Nica-covariant automorphic dilation of $(A,\alpha,P)$, which we call the \textbf{product dilation} of $(A,\alpha,P)$.
\end{defn}

\begin{prop}\label{prop:product_dilation_nc}
The product dilation $(B,\beta,G)$ is a minimal Nica-covariant automorphic dilation of $(A,\alpha,P)$.
\end{prop}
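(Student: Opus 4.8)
The plan is to verify properties (1)--(4) of Definition \ref{def:automorphic_dilation} by direct coordinatewise computation in the $\ell^\infty$-product $\prod_G A$, where multiplication, involution and norm are all taken coordinatewise. First I would record the routine facts that make the setup legitimate: the left shift $\beta_g$ is a bijective coordinate permutation with inverse $\beta_{-g}$, is isometric (hence restricts to the $\ell^\infty$-product), is a $\ast$-homomorphism, and satisfies $\beta_g\beta_h=\beta_{g+h}$ since $[\beta_g\beta_h(x)]_k=x_{k+g+h}$; so $\beta$ is a $G$-action by $\ast$-automorphisms of $\prod_G A$, and since $B$ is by construction the closure of $\sum_{g\in G}\beta_g\iota(A)$ it is $\beta$-invariant, making $(B,\beta,G)$ a C*-dynamical system over $G$. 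Property (1): $\iota$ is multiplicative and $\ast$-preserving coordinatewise because each $\alpha_g$ is a $\ast$-endomorphism with $\alpha_g(a)\alpha_g(b)=\alpha_g(ab)$, it is contractive since $\|\iota(a)\|=\sup_{g\in P}\|\alpha_g(a)\|\le\|a\|$, and injective because $0\in P$ and $\alpha_0=\id$ force $\iota(a)_0=a$; hence $\iota$ is a $\ast$-monomorphism.

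For property (2), fix $a,b\in A$ and $p\in P$ and compute the $g$-th coordinate of $\iota(a)\beta_p(\iota(b))$, which is $\iota(a)_g\,\iota(b)_{g+p}$: if $g\notin P$ this is $0=\iota(a\alpha_p(b))_g$, and if $g\in P$ then $g+p\in P$ since $P$ is a semigroup, so it equals $\alpha_g(a)\alpha_{g+p}(b)=\alpha_g(a\alpha_p(b))=\iota(a\alpha_p(b))_g$, using that $\alpha$ is a semigroup homomorphism. For property (3), set $m:=g\wedge h$ and compute the $k$-th coordinate of $\beta_g(\iota(a))\beta_h(\iota(b))$, namely $\iota(a)_{k+g}\,\iota(b)_{k+h}$; this is nonzero exactly when $k+g\in P$ and $k+h\in P$, i.e. $k\ge(-g)\vee(-h)=-m$, i.e. $k+m\in P$, and in that case the decompositions $k+g=(k+m)+(g-m)$ and $k+h=(k+m)+(h-m)$ with $g-m,h-m\in P$ (since $m\le g$ and $m\le h$) give $\iota(a)_{k+g}\iota(b)_{k+h}=\alpha_{k+m}(\alpha_{g-m}(a)\alpha_{h-m}(b))$. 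The $k$-th coordinate of $\beta_m\bigl(\iota(\alpha_{g-m}(a)\alpha_{h-m}(b))\bigr)$ is $\alpha_{k+m}(\alpha_{g-m}(a)\alpha_{h-m}(b))$ when $k+m\in P$ and $0$ otherwise, matching exactly; this is (3).

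Finally, for minimality (property (4)): $\beta_g(\iota(a))^\ast=\beta_g(\iota(a^\ast))$ so $\sum_{g\in G}\beta_g\iota(A)$ is self-adjoint, and property (3) shows it is closed under multiplication, hence it is a $\ast$-subalgebra and $C^\ast\bigl(\bigcup_{g\in G}\beta_g\iota(A)\bigr)=\overline{\sum_{g\in G}\beta_g\iota(A)}=B$, which is exactly (4) and also justifies the second equality in \eqref{eq:B_def}. I do not expect a genuine obstacle: the entire argument is bookkeeping with coordinatewise operations. The one place that needs a moment of care is the support computation in (3), where one must use the lattice-group identity $(-g)\vee(-h)=-(g\wedge h)$ to see that ``$k+g\in P$ and $k+h\in P$'' is equivalent to ``$k+(g\wedge h)\in P$'', together with the fact that $g-(g\wedge h)\in P$, which is what matches the two sides on the nose.
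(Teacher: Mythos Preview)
Your proof is correct and follows essentially the same coordinatewise verification as the paper's own argument; the key computations for properties (2) and (3), including the lattice identity $(-g)\vee(-h)=-(g\wedge h)$, are identical. You are simply more thorough than the paper in spelling out the preliminary checks (that $\beta$ is a $G$-action, that $\iota$ is a $\ast$-monomorphism, and that $\sum_g\beta_g\iota(A)$ is a $\ast$-subalgebra), which the paper leaves implicit.
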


\begin{proof}
Given $a,b\in A$, $p\in P$, and $g\in G$, we compute
\[
[\iota(a)\beta_p\iota(b)]_g =
\begin{cases}
    \alpha_g(a)\alpha_{g+p}(b) & g\ge 0, \\
    0 & \text{else},
\end{cases}
\]
which equals $[\iota(a\alpha_p(b))]_g$. Thus $\iota(a)\beta_p\iota(b)=\iota(a\alpha_p(b))$. So, $(B,\beta,G)$ is an automorphic dilation of $(A,\alpha,P)$. By \eqref{eq:B_def}, this dilation is minimal. Let $a,b\in A$ and $g,h,k\in G$. Then
\[
[\beta_g\iota(a)\beta_h\iota(b)]_k  =
\begin{cases}
    \alpha_{g+k}(a)\alpha_{h+k}(b) & k\ge -g \text{ and } k\ge -h, \\ 
    0 & \text{else.}
\end{cases}
\]
Because $k\ge -g$ and $k\ge -h$ if and only if $k\ge (-g)\vee (-h)=-(g\wedge h)$, it follows that
\[
\beta_g\iota(a)\beta_h\iota(b)=
\beta_{g\wedge h}\iota(\alpha_{g-g\wedge h}(a)\alpha_{h-g\wedge h}(b)),
\]
so the dilation is Nica-covariant.
\end{proof}

\begin{rem}\label{rem:zahmatkesh}
While finalizing this paper, the author was made aware that the product dilation defined here was defined first by Zahmatkesh for totally ordered abelian groups in \cite {zahmatkesh_partial-isometric_2017}, and for general lattice ordered abelian groups in \cite{zahmatkesh_nica-toeplitz_2019}. In Proposition \ref{prop:automorphic_dilation_embedding}, we prove that a full corner of the crossed product associated to the product dilation is a C*-cover of the semicrossed product $A\nctimes P$. Zahmatkesh proves in \cite{zahmatkesh_nica-toeplitz_2019} that this same full corner is the universal C*-algebra associated to Nica-Toeplitz covariant representations of $(A,\alpha,P)$.
\end{rem}

\begin{eg}\label{eg:Z_product_dilation}
It is most instructive to consider the product dilation of a unital system in the case $(G,P)=(\Z,\Z_+)$. Here, we embed $A$ in $\prod_\Z A$ via
\[
\iota(a) :=
(\ldots,0,0,a,\alpha(a),\alpha^2(a),\ldots),
\]
the ``$a$" occurring at index $0$. Then, $\Z$ acts on $\prod_\Z A$ by the backwards bilateral shift $\beta$. This is an automorphic dilation, because $p_A=(\ldots,0,0,1,1,1,\ldots)$ and 
\[
p_A\beta\iota(a) =
(\ldots,0,0,\alpha(a),\alpha^2(a),\ldots) =
\iota\alpha(a).
\]
\end{eg}

\begin{rem}\label{rem:laca_raeburn}
When $A=\C$ and $P$ acts trivially, the product dilation $(B,\beta,G)$ is the C*-algebra $B_P$ that Laca and Raeburn define in \cite{laca_semigroup_1996}*{Section 2}. In terms of their notation, $1_0=p_A$, and for $p\in P$, $1_p=\beta_{-p}(p_A)$. Nica-covariance of the dilation $B_P$ is seen in Equation (1.2) in \cite{laca_semigroup_1996}.
\end{rem}

As promised, the Nica-covariant semicrossed product $A\nctimes P$ embeds into the crossed product of any Nica-covariant automorphic dilation.

\begin{prop}\label{prop:automorphic_dilation_embedding}
Let $(G,P)$ be a lattice ordered abelian group. Let $(A,\alpha,P)$ be a unital C*-dynamical system. Suppose $(B,\beta,G)$ is a Nica-covariant automorphic dilation of $(A,\alpha,P)$, with $\ast$-embedding $\iota:A\to B$. With $p_A=\iota(1_A)$, there is a completely isometric homomorphism
\[
\varphi=\iota\times up_A:A\nctimes_\alpha P\to B\rtimes_\beta G,
\]
where $(up_A)_p=u_pp_A$. Moreover, if $(B,\beta,P)$ is a minimal automorphic dilation, then
\[
C^\ast(\varphi(A\nctimes_\alpha P))=
p_A(B\rtimes_\beta G)p_A
\]
is a full corner of $B\rtimes_\beta G$.
\end{prop}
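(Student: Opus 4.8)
The plan is to first check that the pair $(\iota, up_A)$ is an isometric Nica-covariant covariant pair taking values in the corner $p_A(B\rtimes_\beta G)p_A$, so that $\varphi$ exists and is completely contractive by the universal property of $A\nctimes_\alpha P$; then to upgrade to a complete isometry using Fock representations; and finally, under minimality, to identify $C^\ast(\varphi(A\nctimes_\alpha P))$ with the full corner by a direct computation.

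\emph{Existence and complete contractivity.} Since $\iota(a) = p_A\iota(a)p_A$, the map $\iota$ lands in the corner, whose unit is $p_A$. Property (3$'$) of Remark~\ref{rem:unital_automorphic_dilation} (with $g=0$) gives $\beta_q(p_A)p_A = p_A$, so $p\mapsto u_pp_A$ is a unital semigroup homomorphism into the corner: $(u_pp_A)^\ast(u_pp_A) = p_Au_p^\ast u_pp_A = p_A$ and $(u_pp_A)(u_qp_A) = u_{p+q}\beta_q(p_A)p_A = u_{p+q}p_A$, so it is an isometric representation of $P$. It is Nica-covariant since, as in the discussion after Remark~\ref{rem:unital_automorphic_dilation}, $(u_pp_A)(u_pp_A)^\ast = u_pp_Au_p^\ast = \beta_{-p}(p_A)$ and $\beta_{-p}(p_A)\beta_{-q}(p_A) = \beta_{-(p\vee q)}(p_A)$ by (3$'$). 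The covariance relation holds as well: $\iota(a)(u_pp_A) = u_p\beta_p(\iota(a))p_A = u_p\iota(\alpha_p(a)) = (u_pp_A)\iota(\alpha_p(a))$, using (2$'$). Since $A\nctimes_\alpha P$ is universal for isometric Nica-covariant pairs and these completely norm it (\cite{davidson_semicrossed_2017}*{Proposition 4.2.1}, \cite{li_regular_2016}), the universal property produces a completely contractive homomorphism $\varphi = \iota\times up_A : A\nctimes_\alpha P \to p_A(B\rtimes_\beta G)p_A \subseteq B\rtimes_\beta G$.

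\emph{Complete isometry.} The key input is that every Fock representation of $A\nctimes_\alpha P$ is completely isometric (\cite{davidson_semicrossed_2017}*{Theorem 4.2.9}). Fix a faithful representation $\pi : A\to B(H)$ with induced Fock pair $(\tilde\pi, V)$ on $H\otimes\ell^2(P)$. I would then produce a faithful $\ast$-representation $\sigma$ of $B\rtimes_\beta G$ on a Hilbert space $\mathcal{L}$ together with a subspace $\mathcal{K}\subseteq\mathcal{L}$, isomorphic to $H\otimes\ell^2(P)$, that is co-invariant for $\sigma(\varphi(A\nctimes_\alpha P))$ and such that the compression $X\mapsto P_{\mathcal{K}}\,\sigma(\varphi(X))|_{\mathcal{K}}$ is unitarily equivalent to $\tilde\pi\times V$. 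Given this, co-invariance makes the compression a homomorphism, so for every $X\in M_n(A\nctimes_\alpha P)$,
\[
\|X\| = \bigl\|(\tilde\pi\times V)^{(n)}(X)\bigr\| \le \bigl\|\sigma^{(n)}(\varphi^{(n)}(X))\bigr\| = \bigl\|\varphi^{(n)}(X)\bigr\| \le \|X\|,
\]
forcing equality. For $\sigma$ I would take a regular representation of $B\rtimes_\beta G$ induced from a faithful representation $\pi_B$ of $B$ arranged so that $H$ is identified with the range of $\pi_B(p_A)$ with $\pi_B(\iota(a))$ restricting to $\pi(a)$ there, and build $\mathcal{K}$ from the cone $P\subseteq G$. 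The hard part will be verifying that $\mathcal{K}$ is genuinely co-invariant and --- crucially --- that the compressed representation is \emph{literally} a Fock representation; this requires careful bookkeeping with the defining relations (2) and (3) of the dilation together with Lemma~\ref{lem:p_A_approx_identity}. One cannot hope to replace ``compression to a co-invariant subspace'' by an honest $\ast$-homomorphism $B\rtimes_\beta G\to B(H\otimes\ell^2(P))$ factoring $\varphi$ through a Fock representation: already for $(A,\mathrm{id},\Z)$, where $B\rtimes_\beta G\cong A\otimes C(\mathbb{T})$ and a Fock shift is not unitary, this is impossible, so the argument must genuinely be dilation-theoretic.

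\emph{The full corner.} Now suppose $(B,\beta,G)$ is minimal. By Lemma~\ref{lem:p_A_approx_identity}, $p_A = \beta_0(p_A)$ is central in $B$ and $(\beta_p(p_A))_{p\in P}$ is an approximate identity. Since $\iota(A)$ and the $u_pp_A$ lie in the corner, $C^\ast(\varphi(A\nctimes_\alpha P))\subseteq p_A(B\rtimes_\beta G)p_A$. For the reverse inclusion, note first that $p_AB = p_ABp_A$ (by centrality) is the closed span of the elements $\beta_{-s}\iota(c) = (u_sp_A)\iota(c)(u_sp_A)^\ast$ ($s\in P$, $c\in A$) by property (3), hence $p_AB\subseteq C^\ast(\varphi(A\nctimes_\alpha P))$. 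Writing any $g\in G$ via its canonical decomposition $g = g^+ - g^-$ with $g^\pm\in P$, $g^+\wedge g^- = 0$, and using $u_{g^-}^\ast b = \beta_{g^-}(b)u_{g^-}^\ast$, centrality of $\beta_{g^\pm}(p_A)$, and (3$'$), one computes for $b\in B$ that
\[
p_A u_g b\, p_A = (u_{g^+}p_A)\,\bigl(p_A\beta_{g^-}(b)\bigr)\,(u_{g^-}p_A)^\ast,
\]
all three factors of which lie in $C^\ast(\varphi(A\nctimes_\alpha P))$ (the middle one because $p_A\beta_{g^-}(b)\in p_AB$). Since $p_A(B\rtimes_\beta G)p_A$ is the closed span of such elements, it equals $C^\ast(\varphi(A\nctimes_\alpha P))$. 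Finally this corner is full: the ideal of $B\rtimes_\beta G$ generated by $p_A$ contains $u_{-p}p_Au_{-p}^\ast = \beta_p(p_A)$ for every $p\in P$, hence the approximate identity $(\beta_p(p_A))_{p\in P}$, hence $B$, and therefore all of $B\rtimes_\beta G$.
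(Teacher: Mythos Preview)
Your approach is essentially the same as the paper's: verify that $(\iota, up_A)$ is an isometric Nica-covariant covariant pair, obtain complete isometry by compressing a regular representation of $B\rtimes_\beta G$ down to a Fock representation, and then compute the full corner using centrality of $p_A$ and minimality. The only organizational difference is in the middle step: the paper starts directly with a faithful representation $\pi$ of $B$ on $H$ (not of $A$) and restricts the regular representation on $H\otimes\ell^2(G)$ to $H\otimes\ell^2(P)$, asserting this yields a Fock representation; your caution here is warranted, and the cleanest fix---implicit in both arguments---is to pass to $\pi(p_A)H\otimes\ell^2(P)$, where property~(2$'$) gives $\pi(\beta_q\iota(a))|_{\pi(p_A)H}=\pi(\iota\alpha_q(a))|_{\pi(p_A)H}$ so that the compression genuinely is the Fock representation built from $\pi\circ\iota$.
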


\begin{proof}
As shown after Remark \ref{rem:unital_automorphic_dilation}, Nica-covariance of the dilation $(B,\beta,G)$ ensures that $up_A:P\to p_A(B\rtimes G)p_A$ is an isometric Nica-covariant representation of $P$. Further, because $p_A=\iota(1_A)$, $\iota$ maps $A$ into $p_A(B\rtimes G)p_A$. The pair $(\iota,up_A)$ is covariant, as for $a\in A$ and $p\in P$,
\[
\iota(a)u_pp_A =
u_p\beta_p\iota(a)p_A =
u_p\iota(\alpha_p(a)) =
u_pp_A\iota(\alpha_p(a)).
\]
By the universal property, there exists a completely contractive homomorphism
\[
\varphi=
\iota\times up_A:A\nctimes P\to p_A(B\rtimes G)p_A \subseteq B\rtimes G.
\]

We have to show that $\varphi$ is completely isometric. Fix any faithful nondegenerate representation $\pi:B\to B(H)$. As $G$ is abelian, the left regular representation
\[
U\times \tilde{\pi}:B\rtimes G\to B(H\otimes \ell^2(G))
\]
is faithful. Then $H\otimes \ell^2(P)\subseteq H\otimes \ell^2(G)$ is a $\tilde{\pi}(B)$ and $U(P)$-invariant subspace. Let 
\begin{align*}
\sigma:=\tilde{\pi}\circ \iota\vert_{H\otimes \ell^2(P)}:A &\to B(H\otimes \ell^2(P)), \quad \text{and}\\
V:= U\vert_{H\otimes \ell^2(P)}:P&\to B(H\otimes \ell^2(P)).
\end{align*}
Then, it is immediate that $(\sigma,V)$ is a Nica-covariant covariant pair for $(A,P)$, and by definition, $\sigma\times V$ is the Fock representation of $A\nctimes P$ on $H\otimes \ell^2(P)$. By \cite{davidson_semicrossed_2017}*{Theorem 4.2.9}, the Fock representation is completely isometric. Let $\kappa:B(H\otimes \ell^2(G))\to B(H\otimes \ell^2(P))$ be the compression map. The diagram
\[\begin{tikzcd}
    A\nctimes P \arrow[r,"\varphi"] \arrow[d,"\sigma\times V"]& B\rtimes G \arrow[d,"\pi\times U"] \\
    B(H\otimes \ell^2(P)) & B(H\otimes \ell^2(G)) \arrow[l,swap,"\kappa"]
\end{tikzcd}\]
commutes. As the vertical maps are complete isometries, and $\kappa$ is a complete contraction, it follows that $\varphi$ is completely isometric, as claimed.

Now suppose that $(B,\beta,G)$ is minimal. We claim that the corner $p_A(B\rtimes G)p_A$ is full and generated by $\varphi(A\nctimes P)$. This is a full corner, because $p_A(B\rtimes G)p_A$ contains $A\subseteq p_ABp_A$, and as $A$ generates $B$ as a $G$-C*-algebra, the ideal that $A$ generates in $B\rtimes G$ is everything. By minimality,
\[
B=
\overline{\sum_{g\in G}\beta_g\iota(A)},
\]
so $B$ is densely spanned by monomials $x=u_g\beta_h\iota(a))$ for $a\in A$ and $g,h\in G$. Given such a monomial, as $p_A$ is central in $B$,
\begin{align*}
p_Axp_A &= p_A u_gp_A\beta_h\iota(a)p_A \\ &=
p_Au_gp_Au_h^\ast \iota(a)u_hp_A \\ &=
(u_{g_-} p_A)^\ast(u_{g_+} p_A)(u_hp_A)^\ast \iota(a)(u_hp_A).
\end{align*}
Here, since $P$ is a spanning cone we have written $g=g_+-g_-$, where $g_\pm\in P$. Thus, $x\in C^\ast(\iota,up_A)$ and $p_A(B\rtimes G)p_A\subseteq C^\ast(\iota,up_A)$. Conversely, since $(\iota,up_A)$ is a Nica-covariant isometric pair, by \cite{davidson_semicrossed_2017}*{Proposition 4.2.3}, $C^\ast(\iota,up_A)$ is densely spanned by monomials $y=(u_pp_A)\iota(a)(u_q p_A)^\ast$, for $a\in A$ and $p,q\in P$. Given $p\in P$, we have 
\[
p_Au_p p_A=u_p\beta_p(p_A)p_A=u_p\beta_{p\wedge 0}(p_A) =u_pp_A,
\]
and by taking adjoints $p_Au_p^\ast =p_Au_p^\ast p_A$. Then, for such a monomial $y$, we find
\begin{align*}
y&=
u_p p_A\iota(a)p_Au_q^\ast \\ &=
p_A u_p p_A\iota(a)p_Au_q^\ast p_A =
p_Ayp_A.
\end{align*}
This proves $C^\ast(\iota,up_A)=p_A(B\rtimes G)p_A$, as desired.
\end{proof}

Proposition \ref{prop:automorphic_dilation_embedding} asserts that $p_A(B\rtimes G)p_A$ is a C*-cover of $A\nctimes P$. To find the C*-envelope $C_e^\ast(A\nctimes P)$, it suffices to describe the Shilov ideal. In Theorem \ref{thm:envelope} we will show that the Shilov ideal arises as a corner of a crossed product $I\rtimes G$, where $I\triangleleft B$ is some $G$-invariant ideal in $B$.  

\begin{defn}\label{def:boundary_ideal}
Let $A$ be an operator algebra, $B$ a C*-algebra, and suppose there is a completely isometric homomorphism $\iota:A\to B$. A (closed) ideal $I\triangleleft B$ is called an \textbf{$A$-boundary ideal} (with respect to $\iota$) if the quotient map $B\to B/I$ restricts to be completely isometric on $A$.
\end{defn}

Note that if $A$ is a C*-algebra in Definition \ref{def:boundary_ideal}, then $I$ is a boundary ideal if and only if the quotient map $B\to B/I$ is faithful on $I$. This occurs if and only if $A\cap I=\{0\}$.

\begin{rem}\label{rem:quotient_dilation}
It is routine to check that if $(B,\beta,G)$ is a C*-dynamical system, and $I\triangleleft B$ is a $\beta$-invariant ideal, then $(B/I,\tilde{\beta},G)$ is also a C*-dynamical system. Here $\tilde{\beta}_g(b+I):=\beta_g(b)+I$ is well defined, by invariance of $I$. The quotient map $q:B\to B/I$ is $G$-equivariant.

Further, suppose that $(G,P)$ is a lattice ordered abelian group, and $(B,\beta,G)$ is an automorphic dilation of $(A,\alpha,P)$ with inclusion $\iota:A\to B$. If $I\triangleleft B$ is a $\beta$-invariant $A$-boundary ideal (meaning $\iota(A)\cap I=\{0\}$), then $(B/I,\tilde{\beta},G)$ is also an automorphic dilation of $(A,\alpha,P)$, because $q\iota$ is faithful on $A$. Moreover, if $(B,\beta,G)$ is Nica-covariant or minimal, then so too is $(B/I,\tilde{\beta},G)$, which easily follows from equivariance of $q$.
\end{rem}

The following lemma summarizes that under reasonable hypotheses we can ``commute" taking quotients with either taking corners or crossed products.

\begin{lem}\label{lem:ideal_facts}
\begin{inparaenum}
\item [(i)] Suppose $C$ is a C*-algebra, and $p\in C$ is a projection. Let $J\triangleleft pCp$ be an ideal. If $K=\langle J\rangle_C=\overline{CJC}$ is the ideal $J$ generates in $C$, then $J=pKp$. Moreover, there is a canonical isomorphism
\[
\frac{pCp}{J}\cong
(p+K)\left(\frac{C}{K}\right)(p+K).
\]\medskip

\item [(ii)] Suppose $(B,\beta,G)$ is an automorphic C*-dynamical system over an abelian group $G$. Let $I\triangleleft B$ be a $G$-invariant ideal. Then the natural map $B\rtimes G\to (B/I)\rtimes G$ induces an isomorphism
\[
\frac{B\rtimes_\beta G}{I\rtimes_\beta G}\cong \frac{B}{I}\rtimes_{\tilde{\beta}} G.
\]
\end{inparaenum}
\end{lem}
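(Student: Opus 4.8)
The plan is to prove the two parts independently; both are standard facts about ideals interacting with corners and crossed products, so the work is in assembling well-known pieces rather than in any single hard step.

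For part (i), I would begin by showing $J = pKp$. The inclusion $J \subseteq pKp$ is immediate since $J \subseteq K$ and $pJp = J$. For the reverse, note that $K = \overline{CJC}$, so a typical element of $pKp$ is a limit of sums $p c_1 j c_2 p$ with $c_i \in C$, $j \in J$; writing $j = p j p$ and using that $pc_1p, p c_2 p \in pCp$ while $J$ is an ideal of $pCp$, each term $p c_1 p \cdot j \cdot p c_2 p$ lies in $J$, and $J$ is closed, giving $pKp \subseteq J$. For the isomorphism, I would define the map $pCp \to (p+K)(C/K)(p+K)$ as the restriction of the quotient $q : C \to C/K$ (noting $q(p) = p + K$ is a projection and $q(pCp) = (p+K)(C/K)(p+K)$), check its kernel is exactly $pCp \cap K = p K p = J$ by the first part, and conclude by the first isomorphism theorem. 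The only mild subtlety is confirming surjectivity onto the corner $(p+K)(C/K)(p+K)$, which follows since $q$ is surjective and intertwines compression by $p$ with compression by $p+K$.

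For part (ii), the natural map is the $*$-homomorphism $\Phi : B \rtimes_\beta G \to (B/I) \rtimes_{\tilde\beta} G$ induced by the $G$-equivariant quotient $q : B \to B/I$ (equivariance is noted in Remark \ref{rem:quotient_dilation}): concretely $\Phi(u_g b) = u_g\, q(b)$. It is surjective because the monomials $u_g (b+I)$ span a dense subalgebra of the target and each is hit. The content is to identify $\ker \Phi$ with $I \rtimes_\beta G$, viewed inside $B \rtimes_\beta G$ as the closed span of $\{u_g b : g \in G,\ b \in I\}$ (this is an ideal since $I$ is $G$-invariant, and it genuinely embeds as the crossed product $I \rtimes_\beta G$ by the standard fact that crossed products by discrete groups preserve short exact sequences of the coefficient algebras). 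Clearly $I \rtimes G \subseteq \ker\Phi$. For the reverse inclusion $\ker \Phi \subseteq I \rtimes G$, the clean argument is to pass to a concrete picture: realize $B \rtimes G$ and $(B/I)\rtimes G$ on $\ell^2(G)$-amplifications via a faithful representation, use the faithful conditional expectation onto $B$ (resp. $B/I$) given by evaluating the coefficient at $g = 0$, and observe that $E$ intertwines $\Phi$ with $q \circ E$; then $x \in \ker\Phi$ forces $q(E(u_{-g} x)) = 0$, i.e. $E(u_{-g} x) \in I$ for all $g$, which says every Fourier coefficient of $x$ lies in $I$, hence $x \in I \rtimes G$ by faithfulness of $E$. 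Then the first isomorphism theorem gives $(B\rtimes G)/(I \rtimes G) \cong (B/I)\rtimes G$.

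The main obstacle, such as it is, lies in part (ii): making the identification $\ker \Phi = I \rtimes G$ airtight requires invoking exactness of crossed products by discrete (here abelian) groups and the faithfulness of the canonical conditional expectation, rather than a bare algebraic manipulation, since the closures involved are not transparent at the level of finitely-supported functions. Everything in part (i) is elementary once the $J = pKp$ identity is in hand.
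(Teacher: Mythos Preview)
Your proof of part (i) is essentially identical to the paper's: the same computation $p(ajb)p = (pap)j(pbp) \in J$ for the nontrivial inclusion, followed by restriction of the quotient map and the first isomorphism theorem.

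For part (ii), the paper's proof is a single sentence: it cites that abelian groups are exact (Brown--Ozawa, Theorem 5.1.10), which immediately gives the short exact sequence $0 \to I\rtimes G \to B\rtimes G \to (B/I)\rtimes G \to 0$. Your approach is compatible but more elaborate: you unpack the argument via Fourier coefficients and the conditional expectation. One small caution: the step ``every Fourier coefficient of $x$ lies in $I$, hence $x \in I\rtimes G$ by faithfulness of $E$'' is not quite right as written. Faithfulness of $E$ on $B\rtimes G$ alone does not give this; what you actually need is that $E$ descends to a \emph{faithful} expectation on $(B\rtimes G)/(I\rtimes G)$, and establishing that is precisely the content of exactness. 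You correctly flag this in your final paragraph, so the overall argument is sound---but it is worth being aware that the Fourier-coefficient sketch does not bypass exactness, it merely repackages where the appeal to it occurs.
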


\begin{proof}
(i) Since $J\subseteq K$, certainly $J=pJp\subseteq pKp$. Conversely, for any term $ajb$, for $a,b\in C$ and $j\in J\subseteq pAp$, the product
\[
p(ajb)p=
pa(pjp)bp=
(pap)j(pbp)
\]
lies in $J$, since $J\triangleleft pCp$. Thus $pKp=J$. Restricting the quotient map $C\to C/K$ gives a $\ast$-homomorphism with range $(p+K)(C/K)(p+K)$ and kernel $K\cap pCp=pKp=J$, so the stated isomorphism follows.
\bigskip

(ii) This follows because $G$ is abelian, and hence an exact group \cite{brown_c*-algebras_2008}*{Theorem 5.1.10}.
\end{proof}

Recall that when $G$ is an abelian group, the compact dual group $\widehat{G}$ has a natural gauge action $\gamma$ on any crossed product $B\rtimes G$, which satisfies
\[
\gamma_\chi(u_gb)=
\chi(g)u_gb.
\]
Consequently, there is a faithful expectation
\[
E_{\widehat{G}}:B\rtimes G \to B\rtimes G
\]
with range $B$, defined by the formula
\[
E_{\widehat{G}}(x) =
\int_{\widehat{G}}\gamma_\chi(x)\; d\chi.
\]
Here $d\chi$ denotes integration against Haar measure.

\begin{lem}\label{lem:invariant_ideal}
Suppose $(B,\beta,G)$ is an automorphic C*-dynamical system over an abelian group $G$. Let $J\triangleleft B\rtimes_\beta G$ be an ideal. Then $J$ is invariant under the gauge action of $\widehat{G}$ if and only if $J=I\rtimes_\beta G$, where $I=J\cap B\triangleleft B$ is a $\beta$-invariant ideal in $B$.
\begin{proof}
Since $\widehat{G}$ acts diagonally on the spanning monomials $u_gb$ in $B\rtimes G$, any ideal of the form $I\rtimes_\beta G$ is $\widehat{G}$-invariant. Conversely, let $J\triangleleft B$ be $\widehat{G}$-invariant. Then $I:= J\cap B\triangleleft B$ is a $G$-invariant ideal, since the action $\beta$ is implemented by unitaries in $B\rtimes_\beta G$. Then, $I\subseteq J$ implies $I\rtimes_\beta G\subseteq J$.

For the reverse inclusion, as in Lemma \ref{lem:ideal_facts}.(ii), there is a canonical onto $\ast$-homomorphism $\pi:B\rtimes_\beta G\to (B/I)\rtimes_{\tilde{\beta}} G$ with kernel $I\rtimes_\beta G$. Given $x\in J$, because $J$ is closed and $\widehat{G}$-invariant $E_{\widehat{G}}(x^\ast x)\in J\cap B=I$ and hence $\pi(E_{\widehat{G}}(x^\ast x))=0$. Since $\pi$ is $\widehat{G}$-equivariant, we find
\[
0=
\pi(E_{\widehat{G}}(x^\ast x)) =
E_{\widehat{G}}(\pi(x^\ast x)). 
\]
As the expectation $E_{\widehat{G}}$ is faithful, $\pi(x)=0$ and $x\in \ker \pi=I\rtimes_\beta G$. Therefore $J=I\rtimes_\beta G$.
\end{proof}
\end{lem}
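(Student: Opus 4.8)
The plan is to prove the two implications separately, the ``if'' direction being essentially formal and the ``only if'' direction resting on the faithful gauge expectation $E_{\widehat{G}}$.

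For the ``if'' direction, suppose $J=I\rtimes_\beta G$ for some $\beta$-invariant ideal $I\triangleleft B$. Since the gauge automorphism $\gamma_\chi$ sends each spanning monomial $u_g b$ to $\chi(g)u_g b$, and $I\rtimes_\beta G=\cspn\{u_g b\mid g\in G,\,b\in I\}$, the ideal $J$ is manifestly $\widehat{G}$-invariant. To see that the $I$ appearing here is exactly $J\cap B$, I would use that $E_{\widehat{G}}$ annihilates $u_g b$ for $g\neq 0$ and fixes $B$ pointwise, so $E_{\widehat{G}}(I\rtimes_\beta G)=I$; then any $x\in B\cap(I\rtimes_\beta G)$ satisfies $x=E_{\widehat{G}}(x)\in I$, while $I\subseteq B\cap(I\rtimes_\beta G)$ is trivial.

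For the converse, assume $J$ is $\widehat{G}$-invariant and set $I:=J\cap B$. First I would check that $I$ is a $\beta$-invariant ideal of $B$: it is an ideal of $B$ as the intersection of an ideal of $B\rtimes_\beta G$ with the C*-subalgebra $B$, and since $\beta_g$ is implemented by the unitary multiplier $u_g$ via $\beta_g(b)=u_g^\ast b u_g$, we get $\beta_g(I)\subseteq u_g^\ast J u_g\cap B\subseteq J\cap B=I$. The inclusion $I\rtimes_\beta G\subseteq J$ is then immediate, since $\cspn\{u_g b\mid b\in I\}\subseteq J$: each $b\in I\subseteq J$ and left multiplication by the multiplier $u_g$ preserves the ideal $J$.

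The substance is the reverse inclusion $J\subseteq I\rtimes_\beta G$. I would invoke Lemma \ref{lem:ideal_facts}(ii) to obtain the canonical surjection $\pi\colon B\rtimes_\beta G\to (B/I)\rtimes_{\tilde\beta}G$ with $\ker\pi=I\rtimes_\beta G$, and show $\pi$ vanishes on $J$. Given $x\in J$, the element $E_{\widehat{G}}(x^\ast x)=\int_{\widehat{G}}\gamma_\chi(x^\ast x)\,d\chi$ is a norm-limit of convex combinations of the elements $\gamma_\chi(x^\ast x)$, each of which lies in $J$ because $J$ is closed and $\widehat{G}$-invariant; since moreover $E_{\widehat{G}}$ has range $B$, it follows that $E_{\widehat{G}}(x^\ast x)\in J\cap B=I$, so $\pi(E_{\widehat{G}}(x^\ast x))=0$. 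Because $\pi$ is equivariant for the two gauge actions, it intertwines $E_{\widehat{G}}$ with the gauge expectation $E_{\widehat{G}}'$ on $(B/I)\rtimes_{\tilde\beta}G$, so $0=\pi(E_{\widehat{G}}(x^\ast x))=E_{\widehat{G}}'(\pi(x)^\ast\pi(x))$, and faithfulness of $E_{\widehat{G}}'$ forces $\pi(x)=0$, i.e.\ $x\in I\rtimes_\beta G$. The main obstacle I anticipate is precisely this last step: making rigorous that $E_{\widehat{G}}$ carries the $\widehat{G}$-invariant closed ideal $J$ into itself (a Haar-averaging argument), and tracking that $\pi$ transports one gauge expectation to the other so that faithfulness on the quotient can be applied; the remaining manipulations are routine crossed-product bookkeeping.
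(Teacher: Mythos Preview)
Your proposal is correct and follows essentially the same route as the paper: both directions are handled identically, with the key step being to use the quotient map $\pi$ from Lemma~\ref{lem:ideal_facts}(ii), observe $E_{\widehat{G}}(x^\ast x)\in J\cap B=I$ by gauge-invariance of $J$, and then use equivariance of $\pi$ together with faithfulness of the gauge expectation on the quotient to conclude $\pi(x)=0$. You supply a bit more detail than the paper (the Haar-averaging justification that $E_{\widehat{G}}(J)\subseteq J$, and the explicit check that $I=J\cap B$ in the ``if'' direction), but there is no substantive difference in strategy.
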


We can now identify the Shilov ideal in $C^\ast(\varphi(A\nctimes P))=p_A(B\rtimes G)p_A$, for any minimal Nica-covariant automorphic dilation $(B,\beta,G)$.

\begin{thm}\label{thm:envelope}
Let $(G,P)$ be a lattice ordered abelian group, and let $(A,\alpha,P)$ be a unital C*-dynamical system over $P$. Suppose $(B,\beta,G)$ is any minimal Nica-covariant automorphic dilation of $(A,\alpha,P)$, with $\ast$-embedding $\iota:A\to B$. Then, there is a unique maximal $\beta$-invariant $A$-boundary ideal $I\triangleleft B$. Further, if $p_A=\iota(1_A)\in B$ and $\varphi=\iota\times up_A:A\nctimes_\alpha P\to B\rtimes_\beta G$ is the completely isometric embedding from Proposition \ref{prop:automorphic_dilation_embedding}, then
\[
p_A(I\rtimes_\beta G)p_A \triangleleft 
p_A(B\rtimes_\beta G)p_A =
C^\ast(\varphi(A\nctimes_\alpha P))
\]
is the Shilov ideal for $A\nctimes_\alpha P$. Consequently
\[
C_e^\ast(A\nctimes_\alpha P)\cong
(p_A+I)\left(\frac{B}{I}\rtimes_{\tilde{\beta}} G\right)(p_A+I)
\]
is a full corner of a crossed product.
\end{thm}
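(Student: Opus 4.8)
The plan is to reduce everything to the existence of a Shilov ideal at the level of the full corner $C := p_A(B\rtimes_\beta G)p_A$ (which, by Proposition \ref{prop:automorphic_dilation_embedding}, is a C*-cover of $A\nctimes_\alpha P$), and then transport that ideal back to a $\beta$-invariant ideal of $B$ via the correspondence theory of Lemma \ref{lem:ideal_facts} and Lemma \ref{lem:invariant_ideal}. Concretely: let $\calJ\triangleleft C$ be the Shilov ideal of $A\nctimes_\alpha P$ inside $C$, which exists and is the unique maximal $A$-boundary ideal by \cite{arveson_subalgebras_1969}. Let $K := \langle \calJ\rangle_{B\rtimes G}$ be the ideal it generates in $B\rtimes G$. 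By Lemma \ref{lem:ideal_facts}.(i), $\calJ = p_A K p_A$ and $C/\calJ \cong (p_A + K)\big((B\rtimes G)/K\big)(p_A+K)$.

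The crux is to show that $K$ is gauge-invariant, i.e.\ invariant under the $\widehat{G}$-action $\gamma$; once that is known, Lemma \ref{lem:invariant_ideal} gives $K = I\rtimes_\beta G$ for the $\beta$-invariant ideal $I := K\cap B\triangleleft B$, and then Lemma \ref{lem:ideal_facts}.(ii) and .(i) assemble the stated description of $C_e^\ast(A\nctimes_\alpha P)$ as the full corner $(p_A+I)\big((B/I)\rtimes_{\tilde\beta}G\big)(p_A+I)$. To see gauge-invariance of $K$, I would argue that the Shilov ideal $\calJ$ itself is invariant under the gauge action \emph{restricted to the corner} $C$: the gauge automorphisms $\gamma_\chi$ fix $p_A\in B$ (since $\gamma_\chi(u_0 b) = b$), hence restrict to automorphisms of $C$; moreover each $\gamma_\chi$ fixes $\iota(A)\subseteq B\subseteq C$ pointwise, so it carries $\varphi(A\nctimes P)$ onto itself — indeed $\gamma_\chi(u_p p_A) = \chi(p)\,u_p p_A$, so $\gamma_\chi\circ\varphi = \varphi\circ(\text{gauge on }A\nctimes P)$, a completely isometric automorphism of the semicrossed product. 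Therefore $\gamma_\chi(\calJ)$ is again an $A$-boundary ideal of $C$ of the same ``size'', and by uniqueness/maximality of the Shilov ideal, $\gamma_\chi(\calJ) = \calJ$ for every $\chi$. Passing to the generated ideal, $\gamma_\chi(K) = \langle\gamma_\chi(\calJ)\rangle = K$, so $K$ is gauge-invariant as needed.

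It then remains to check that $I = K\cap B$ is an $A$-boundary ideal of $B$ (equivalently $\iota(A)\cap I = \{0\}$) and that it is the \emph{unique maximal} such $\beta$-invariant ideal. The first follows because $\iota(A)\subseteq p_A B p_A \subseteq C$ and $\calJ = p_A K p_A$ is an $A$-boundary ideal of $C$, so $\iota(A)\cap I \subseteq \iota(A)\cap p_A K p_A = \iota(A)\cap \calJ = \{0\}$. For maximality and uniqueness: if $I'\triangleleft B$ is any $\beta$-invariant $A$-boundary ideal, then by Lemma \ref{lem:invariant_ideal} (applied with $J = I'\rtimes G$) and Lemma \ref{lem:ideal_facts}.(i), $p_A(I'\rtimes G)p_A$ is an $A$-boundary ideal of $C$ — one must check $\varphi(A\nctimes P)\cap p_A(I'\rtimes G)p_A = \{0\}$, which reduces to faithfulness of the quotient dilation $B/I'$ noted in Remark \ref{rem:quotient_dilation} together with the fact (Proposition \ref{prop:automorphic_dilation_embedding}) that the corner of the crossed product of the \emph{minimal Nica-covariant automorphic dilation} $B/I'$ is still a C*-cover of $A\nctimes P$ — hence $p_A(I'\rtimes G)p_A \subseteq \calJ = p_A K p_A$, whence $I' = I'\cap B \subseteq (I'\rtimes G)\cap B \subseteq K\cap B = I$. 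So $I$ is the largest $\beta$-invariant $A$-boundary ideal, and in particular unique with the maximality property.

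The main obstacle I expect is the careful bookkeeping in the last paragraph: verifying that corners of crossed products of quotient dilations remain C*-covers (so that their ``Shilov-like'' ideals sit inside $\calJ$), and checking that $p_A(I'\rtimes G)p_A$ genuinely avoids $\varphi(A\nctimes P)$ rather than merely $\iota(A)$. This is where one uses that $A\nctimes P$ is completely normed by \emph{isometric} Nica-covariant pairs and the Fock-representation argument of Proposition \ref{prop:automorphic_dilation_embedding}, applied to the quotient dilation. Everything else — gauge-invariance of $\calJ$, the ideal correspondences — is formal once the right lemmas (Lemmas \ref{lem:ideal_facts}, \ref{lem:invariant_ideal} and Remark \ref{rem:quotient_dilation}) are in hand.
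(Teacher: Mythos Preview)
Your proposal is essentially the same strategy as the paper's proof: take the Shilov ideal $\calJ$ in the corner $C=p_A(B\rtimes G)p_A$, show it is gauge-invariant (because the gauge automorphisms restrict to completely isometric automorphisms of $C$ preserving $\varphi(A\nctimes P)$), generate an ideal $K$ in $B\rtimes G$, apply Lemma~\ref{lem:invariant_ideal} to write $K=I\rtimes G$, and then check that $I$ is the unique maximal $\beta$-invariant $A$-boundary ideal. The gauge-invariance argument and the verification that $I\cap\iota(A)=\{0\}$ match the paper.

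There is, however, one genuine gap in your maximality argument. From $p_A(I'\rtimes G)p_A\subseteq \calJ=p_AKp_A$ you write ``whence $I'=I'\cap B\subseteq (I'\rtimes G)\cap B\subseteq K\cap B=I$'', but the inclusion $(I'\rtimes G)\cap B\subseteq K\cap B$ presupposes $I'\rtimes G\subseteq K$, which does \emph{not} follow formally from a containment of corners. The paper closes this gap differently: after obtaining the corner inclusion it intersects with $B$ (where $p_A$ is central by Lemma~\ref{lem:p_A_approx_identity}) to get $p_A I'\subseteq p_A I$, and then uses that $(\beta_p(p_A))_{p\in P}$ is an approximate identity for $B$ together with $\beta$-invariance of $I'$ and $I$: for $x\in I'$ one has $x\beta_p(p_A)=\beta_p(\beta_{-p}(x)p_A)\in\beta_p(I'p_A)\subseteq\beta_p(I)\subseteq I$, and taking the limit over $p$ gives $x\in I$. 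Alternatively, your line can be repaired by explicitly invoking the Rieffel ideal correspondence for a \emph{full} corner (fullness is part of Proposition~\ref{prop:automorphic_dilation_embedding}), which does give $I'\rtimes G\subseteq K$ from $p_A(I'\rtimes G)p_A\subseteq p_AKp_A$; but you should say so rather than leave the inclusion unjustified.
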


\begin{proof}
Let $\varphi=\iota\times up_A:A\nctimes_\alpha P\to B\rtimes_\beta G$ be the completely isometric representation from Proposition \ref{prop:automorphic_dilation_embedding}. Let $J\triangleleft p_A(B\rtimes_\beta G)p_A$ be the Shilov ideal for $A\nctimes_\alpha P$. Since $\varphi(A\nctimes_\alpha P)=\cspn\{u_p\iota(a)\mid p\in P, a\in A\}$ is invariant under the gauge action of $\widehat{G}$, it follows that $J$ is also $\widehat{G}$-invariant. Let $K=\overline{(B\rtimes_\beta G)J(B\rtimes_\beta G)}$ be the ideal $J$ generates in the entire crossed product $B\rtimes_\beta G$. Since $J$ is $\widehat{G}$-invariant, so too is $K$. By Lemma \ref{lem:invariant_ideal}, we have $K=I\rtimes_\beta G$ for some $\beta$-invariant $I\triangleleft B$. By Lemma \ref{lem:ideal_facts}.(i), we find
\[
J=
p_AKp_A=
p_A(I\rtimes_\beta G)p_A.
\]
Because $\iota(A)\subseteq p_A(B\rtimes_\beta G)p_A$, we also find
\[
I\cap \iota(A)=
K\cap \iota(A) =
p_A(K\cap \iota(A))p_A =
J\cap \iota(A) = \{0\},
\]
since $J$ does not intersect $\varphi(A\nctimes_\alpha P)\supseteq \iota(A)$. Therefore $I$ is a $\beta$-invariant boundary ideal. By Lemma \ref{lem:ideal_facts}, we have a canonical isomorphism
\[
C_e^\ast(A\nctimes_\alpha P)\cong
\frac{p_A(B\rtimes_\beta G)p_A}{p_A(I\rtimes_\beta G)p_A} \cong
(p_A+I)\left(\frac{B}{I}\rtimes_{\tilde{\beta}}G\right)(p_A+I).
\]

To see that $I$ is the unique maximal such ideal, suppose that $R\triangleleft B$ is any $\beta$-invariant $A$-boundary ideal. Then $p_A(R\rtimes_\beta G)p_A\triangleleft p_A(B\rtimes_\beta G)p_A$. By Lemma \ref{lem:ideal_facts} again,
\[
\frac{p_A(B\rtimes_\beta G)p_A}{p_A(R\rtimes_\beta G)p_A} \cong
(p_A+R)\left(\frac{B}{R}\rtimes_{\tilde{\beta}} G\right)(p_A+R).
\]
Then by Remark \ref{rem:quotient_dilation}, $(B/R,\tilde{\beta},G)$ is a minimal Nica covariant automorphic dilation. By Proposition \ref{prop:automorphic_dilation_embedding}, $(p_A+R)((B/R)\rtimes_\beta G)(p_A+R)$ is a C*-cover for $A\nctimes P$. By definition of the C*-envelope, there is an onto $\ast$-homomorphism
\[
\frac{p_A(B\rtimes_\beta G)p_A}{p_A(R\rtimes_\beta G)p_A}\cong
(p_A+R)\left(\frac{B}{R}\rtimes_{\tilde{\beta}} G\right)(p_A+R) \to
C_e^\ast(A\nctimes_\alpha P)\cong
\frac{p_A(B\rtimes_\beta G)p_A}{p_A(I\rtimes_\beta G)p_A},
\]
which fixes $A\nctimes P$. It follows that $p_A(R\rtimes_\beta G)p_A\subseteq p_A(I\rtimes_\beta G)p_A$. Upon intersecting with $B$, in which $p_A$ is central, we find $p_AR\subseteq p_AI$. Since $R$ and $I$ are $\beta$-invariant, and $(\beta_g(p_A))_{g\in G}$ is an approximate identity in $B$, by Lemma \ref{lem:p_A_approx_identity}, it follows that $R\subseteq I$. Indeed, for $x\in R$,
\[
x\beta_g(p_A) =
\beta_g(\beta_{-g}(x)p_A)
\]
lies in $\beta_g(Rp_A)\subseteq  \beta_g(I)\subseteq I$, and converges as a net indexed by $g\in G$ to $x\in R$.
\end{proof}

\begin{cor}\label{cor:no_boundary_ideals}
Suppose $(A,\alpha,P)$ is a unital C*-dynamical system over a lattice ordered abelian group $(G,P)$. If $(B,\beta,G)$ is a minimal Nica-covariant automorphic dilation of $(A,\alpha,P)$, then the C*-cover
\[
\varphi:A\nctimes_\alpha P \to p_A(B\rtimes_\beta G)p_A
\]
is a C*-envelope if and only if $B$ contains no nontrivial $\beta$-invariant $A$-boundary ideals.
\end{cor}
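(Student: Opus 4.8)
The plan is to read this off directly from Theorem~\ref{thm:envelope}, together with the general fact that a C*-cover is the C*-envelope precisely when its Shilov ideal is $\{0\}$. That fact is immediate from co-universality: if $\varphi:A\nctimes_\alpha P\to p_A(B\rtimes_\beta G)p_A$ has Shilov ideal $J$ and is already a C*-envelope, then applying co-universality to the C*-cover $q\varphi$ (with $q$ the quotient by $J$) produces a $\ast$-homomorphism splitting $q$ and fixing $A\nctimes_\alpha P$, which forces $q$ to be injective, i.e. $J=\{0\}$; the converse is trivial. So it suffices to show that the Shilov ideal of $\varphi$ vanishes if and only if $B$ has no nonzero $\beta$-invariant $A$-boundary ideal.

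By Theorem~\ref{thm:envelope}, there is a unique maximal $\beta$-invariant $A$-boundary ideal $I\triangleleft B$, and the Shilov ideal of $\varphi$ is exactly $p_A(I\rtimes_\beta G)p_A$. Since $\{0\}$ is always a $\beta$-invariant $A$-boundary ideal and $I$ is the largest one, the condition ``$B$ has no nonzero $\beta$-invariant $A$-boundary ideal'' is equivalent to ``$I=\{0\}$''. Hence the entire corollary reduces to the claim that $p_A(I\rtimes_\beta G)p_A=\{0\}$ if and only if $I=\{0\}$.

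One direction of this is obvious. For the other, suppose $p_A(I\rtimes_\beta G)p_A=\{0\}$. Since $p_A$ is central in $B$ by Lemma~\ref{lem:p_A_approx_identity}, for every $x\in I$ the element $p_Ax=p_Axp_A$ lies in $p_A(I\rtimes_\beta G)p_A$, so $p_Ax=0$. Using $\beta$-invariance of $I$, replacing $x$ by $\beta_{-g}(x)\in I$ and applying $\beta_g$ gives $\beta_g(p_A)x=0$ for all $g\in G$. Since $(\beta_g(p_A))_{g\in G}$ is an approximate identity for $B$ (again Lemma~\ref{lem:p_A_approx_identity}), passing to the limit yields $x=0$, so $I=\{0\}$.

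There is essentially no obstacle here: the corollary is pure bookkeeping layered on top of Theorem~\ref{thm:envelope}. The only point worth flagging is that $p_A$ is typically not a unit of $B$, so the passage from $p_AI=\{0\}$ to $I=\{0\}$ genuinely requires the approximate-identity property, which is precisely where the minimality and Nica-covariance of the dilation enter, through Lemma~\ref{lem:p_A_approx_identity}.
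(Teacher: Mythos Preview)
Your proof is correct and follows the intended route: the paper states this corollary without proof, treating it as immediate from Theorem~\ref{thm:envelope}. Your argument makes explicit the one nontrivial point the paper suppresses, namely that $p_A(I\rtimes_\beta G)p_A=\{0\}$ forces $I=\{0\}$ via Lemma~\ref{lem:p_A_approx_identity}, which is exactly the approximate-identity trick already used at the end of the proof of Theorem~\ref{thm:envelope}.
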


\begin{cor}\label{cor:envelope_full_corner}
Suppose that $(A,\alpha,P)$ is a unital C*-dynamical system, where $(G,P)$ is a lattice ordered abelian group. The C*-envelope $C_e^\ast(A\nctimes_\alpha P)$ is a full corner of a crossed product of a minimal Nica-covariant automorphic dilation of $(A,\alpha,P)$.
\end{cor}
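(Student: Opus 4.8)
The plan is simply to assemble the pieces already established. First I would invoke Proposition \ref{prop:product_dilation_nc} to note that the class of minimal Nica-covariant automorphic dilations of $(A,\alpha,P)$ is nonempty: the product dilation $(B,\beta,G)$ of Definition \ref{def:product_dilation} is one such. (Any single minimal Nica-covariant automorphic dilation suffices; all we need is existence.) Fix this $(B,\beta,G)$, with $\ast$-embedding $\iota\colon A\to B$ and $p_A=\iota(1_A)$.

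Next, apply Proposition \ref{prop:automorphic_dilation_embedding} to $(B,\beta,G)$: the map $\varphi=\iota\times up_A$ is a completely isometric homomorphism of $A\nctimes_\alpha P$ into $B\rtimes_\beta G$ whose image generates the full corner $p_A(B\rtimes_\beta G)p_A$. Thus $\varphi\colon A\nctimes_\alpha P\to p_A(B\rtimes_\beta G)p_A$ is a C*-cover. Then apply Theorem \ref{thm:envelope}: there is a unique maximal $\beta$-invariant $A$-boundary ideal $I\triangleleft B$, its induced ideal $p_A(I\rtimes_\beta G)p_A$ is the Shilov ideal of this cover, and consequently
\[
C_e^\ast(A\nctimes_\alpha P)\cong (p_A+I)\left(\frac{B}{I}\rtimes_{\tilde{\beta}}G\right)(p_A+I).
\]

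It remains to recognize the right-hand side as a full corner of the crossed product of a \emph{minimal Nica-covariant automorphic dilation}. By Remark \ref{rem:quotient_dilation}, since $I$ is a $\beta$-invariant $A$-boundary ideal (i.e.\ $\iota(A)\cap I=\{0\}$), the quotient system $(B/I,\tilde{\beta},G)$ is again a minimal Nica-covariant automorphic dilation of $(A,\alpha,P)$, with $\ast$-embedding $q\circ\iota\colon A\to B/I$ where $q\colon B\to B/I$ is the quotient map, and with $(q\circ\iota)(1_A)=p_A+I$. Applying Proposition \ref{prop:automorphic_dilation_embedding} once more, now to the dilation $(B/I,\tilde{\beta},G)$, shows that $(p_A+I)((B/I)\rtimes_{\tilde{\beta}}G)(p_A+I)$ is a \emph{full} corner and is the image of a C*-cover of $A\nctimes_\alpha P$. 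Combined with the displayed isomorphism, this exhibits $C_e^\ast(A\nctimes_\alpha P)$ as a full corner of the crossed product of the minimal Nica-covariant automorphic dilation $(B/I,\tilde{\beta},G)$, which is the claim.

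There is essentially no obstacle here beyond bookkeeping; the one point that requires care, and the reason the corollary is not literally immediate from Theorem \ref{thm:envelope}, is that Theorem \ref{thm:envelope} only produces a \emph{corner} of $(B/I)\rtimes_{\tilde{\beta}}G$, so one must separately verify that this corner is full and that $(B/I,\tilde{\beta},G)$ is still minimal and Nica-covariant — both handled by re-invoking Remark \ref{rem:quotient_dilation} and Proposition \ref{prop:automorphic_dilation_embedding} for the quotient dilation.
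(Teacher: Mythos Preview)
Your proposal is correct and follows essentially the same route as the paper: cite the product dilation (Proposition \ref{prop:product_dilation_nc}) to get existence, apply Theorem \ref{thm:envelope}, and then invoke Remark \ref{rem:quotient_dilation} to see that $(B/I,\tilde{\beta},G)$ is again a minimal Nica-covariant automorphic dilation. One small correction: the statement of Theorem \ref{thm:envelope} already asserts that the resulting corner is \emph{full}, so your final paragraph's claim that the theorem ``only produces a corner'' is not quite accurate---your re-application of Proposition \ref{prop:automorphic_dilation_embedding} to the quotient dilation is a legitimate way to see fullness, but it is not an additional step the paper regards as missing.
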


\begin{proof}
To apply Theorem \ref{thm:envelope}, it is enough to note that $(A,\alpha,P)$ has at least one minimal Nica-covariant automorphic dilation. The product dilation $(B,\beta,G)$ from Definition \ref{def:product_dilation} suffices. Then
\[
C_e^\ast(A\nctimes_\alpha P)\cong
(p_A+I)\left(\frac{B}{I}\rtimes G\right)(p_A+I),
\]
and by Remark \ref{rem:quotient_dilation}, $(B/I,\tilde{\beta},G)$ is itself a minimal Nica-covariant automorphic dilation.
\end{proof}

\begin{rem}\label{rem:commutative_envelope}
Note that when $A\cong C(X)$ is a commutative C*-algebra, the product dilation $B\subseteq \prod_{g\in G} A$ is also commutative. Consequently the minimal Nica-covariant automorphic dilation in Corollary \ref{cor:envelope_full_corner} is a quotient of the product dilation, and hence also commutative.
\end{rem}

Corollary \ref{cor:envelope_full_corner} extends even to nonunital systems. To show this, we use essentially the same unitization technique as in \cite{davidson_semicrossed_2017}*{Section 4.3}.

\begin{cor}\label{cor:non_unital}
Let $(G,P)$ be a lattice ordered abelian group, and $(A,\alpha,P)$ a (possibly nonunital) C*-dynamical system. The C*-envelope of $A\nctimes_\alpha P$ is a full corner of a crossed product associated to a minimal Nica-covariant automorphic dilation of $(A,\alpha,P)$.
\end{cor}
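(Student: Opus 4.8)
The plan is to reduce to the unital Theorem~\ref{thm:envelope} by passing to the forced unitization. Write $A^1:=A\oplus\C 1$ and extend each $\alpha_p$ to the unital $\ast$-endomorphism $\alpha^1_p(a+\lambda 1)=\alpha_p(a)+\lambda 1$ of $A^1$, giving a unital C*-dynamical system $(A^1,\alpha^1,P)$. A direct computation on the dense monomials $v_pa$ shows that $\calA:=A\nctimes_\alpha P$ is a closed two-sided ideal of $\calA^1:=A^1\nctimes_{\alpha^1}P$; and restricting covariant pairs (equivalently, comparing Fock representations via \cite{davidson_semicrossed_2017}*{Theorem~4.2.9}) shows the inclusion $\calA\hookrightarrow\calA^1$ is completely isometric. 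Following the unitization technique of \cite{davidson_semicrossed_2017}*{Section~4.3}, it then suffices to show that $C^\ast_e(\calA)$ is canonically the ideal generated by (the image of) $\calA$ inside $C^\ast_e(\calA^1)$, and to identify this ideal as a full corner of a crossed product of a minimal Nica-covariant automorphic dilation of $(A,\alpha,P)$.

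For the structural part, let $(B^1,\beta^1,G)$ be the product dilation of the unital system $(A^1,\alpha^1,P)$ (Definition~\ref{def:product_dilation}, Proposition~\ref{prop:product_dilation_nc}), with $\ast$-embedding $\iota^1\colon A^1\to B^1$ and central projection $q:=\iota^1(1_{A^1})$. By Corollary~\ref{cor:envelope_full_corner} there is a unique maximal $\beta^1$-invariant $A^1$-boundary ideal $I^1\triangleleft B^1$ with $C^\ast_e(\calA^1)\cong q((B^1/I^1)\rtimes G)q$ a full corner. Since $A\subseteq A^1$ and $\iota^1|_A=\iota$ is exactly the embedding of the product dilation of $(A,\alpha,P)$, that product dilation $B=\overline{\sum_{g\in G}\beta^1_g\iota(A)}$ sits inside $B^1$, and the Nica-covariance identity~(3) of Definition~\ref{def:automorphic_dilation}, already verified for $B^1$, forces $B$ to be a $G$-invariant ideal of $B^1$. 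Put $I:=B\cap I^1\triangleleft B$, a $\beta$-invariant ideal with $I\cap\iota(A)\subseteq I^1\cap\iota^1(A^1)=\{0\}$, so by Remark~\ref{rem:quotient_dilation} $(B/I,\tilde\beta,G)$ is again a minimal Nica-covariant automorphic dilation of $(A,\alpha,P)$.

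It remains to identify the ideal $L$ generated by $\iota(A)$ in $C^\ast_e(\calA^1)=q((B^1/I^1)\rtimes G)q$. Because $B\triangleleft B^1$, Lemmas~\ref{lem:ideal_facts} and~\ref{lem:invariant_ideal} give $B\rtimes G\triangleleft B^1\rtimes G$ with $(B^1/I^1)\rtimes G\cong(B^1\rtimes G)/(I^1\rtimes G)$, and $q$ is a multiplier of $B\rtimes G$ and of $(B/I)\rtimes G$. Since $\iota(A)$ generates $B$ as a $G$-C*-algebra (minimality, property~(4)), the ideal it generates in $B^1\rtimes G$ is all of $B\rtimes G$, so its image in $(B^1/I^1)\rtimes G$ is $(B\rtimes G)/(I\rtimes G)\cong(B/I)\rtimes G$; combining this with the identity $\iota(a)=q\iota(a)q$ and Lemma~\ref{lem:ideal_facts}(i) shows $L=q((B/I)\rtimes G)q$, which is a full corner since it contains $\iota(A)$ and $\iota(A)$ generates $(B/I)\rtimes G$ as an ideal. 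With the transfer fact of the first paragraph this gives
\[
C^\ast_e(A\nctimes_\alpha P)\ \cong\ q((B/I)\rtimes_{\tilde\beta}G)q,
\]
a full corner of the crossed product of the minimal Nica-covariant automorphic dilation $(B/I,\tilde\beta,G)$ of $(A,\alpha,P)$.

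The main obstacle is the transfer fact invoked in the first and third paragraphs: that for a closed two-sided ideal $\calA$ of an operator algebra $\calA^1$, the C*-envelope $C^\ast_e(\calA)$ is canonically the ideal generated by $\calA$ in $C^\ast_e(\calA^1)$. When all the $\alpha_p$ are nondegenerate, $\calA=A\nctimes_\alpha P$ is approximately unital and this is a standard fact about C*-envelopes of approximately unital ideals; in general it is obtained from the description of the C*-envelope of a (possibly non-approximately-unital) operator algebra through its unitization, exactly as in \cite{davidson_semicrossed_2017}*{Section~4.3}. The remaining steps---verifying $\calA\triangleleft\calA^1$ and the complete isometry, that $B\triangleleft B^1$, the multiplier calculus for $q$ through the quotients, and fullness of the corner---are routine given Lemma~\ref{lem:ideal_facts}, Lemma~\ref{lem:invariant_ideal}, Remark~\ref{rem:quotient_dilation}, and the explicit formulas of Definition~\ref{def:product_dilation}.
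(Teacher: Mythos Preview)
Your strategy parallels the paper's up to a point: both unitize to $(A^1,\alpha^1,P)$ (the paper writes $(\tilde A,\tilde\alpha,P)$), form its product dilation $B^1$, and identify the product dilation $B$ of $(A,\alpha,P)$ as a $G$-invariant ideal in $B^1$, eventually with $I=B\cap I^1$. The divergence is in how $C_e^\ast(\calA)$ for $\calA=A\nctimes_\alpha P$ is actually pinned down. The paper does \emph{not} first compute $C_e^\ast(A^1\nctimes_{\alpha^1}P)$ and then descend; instead it shows directly that $q(B\rtimes_\beta G)q$ (with $q=\iota^1(1_{A^1})$) is a C*-cover of $\calA$ --- the argument of Proposition~\ref{prop:automorphic_dilation_embedding} carries over because $q$ is a multiplier of the ideal $B\rtimes G\triangleleft B^1\rtimes G$ --- and then reruns the Shilov-ideal computation of Theorem~\ref{thm:envelope} verbatim in this cover to get the Shilov ideal as $q(I\rtimes G)q$ for the unique maximal $\beta$-invariant $A$-boundary ideal $I\triangleleft B$. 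Only afterwards does it observe $I=B\cap I^1$.

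Your substitute for that direct Shilov computation is the ``transfer fact'': that for a closed two-sided ideal $\calA\triangleleft\calA^1$, the C*-envelope $C_e^\ast(\calA)$ coincides with the ideal generated by $\calA$ in $C_e^\ast(\calA^1)$. This is the gap. It is not a standard result, and crucially $\calA^1=A^1\nctimes_{\alpha^1}P$ is \emph{not} the forced unitization $\calA\oplus\C 1$: it contains all the isometries $v_p=v_p 1_{A^1}$ for $p\in P$, which lie outside $\calA+\C 1$, so Meyer's description of C*-envelopes via unitization does not apply directly. The reference to \cite{davidson_semicrossed_2017}*{Section~4.3} does not supply it either: the unitization technique there, as here, is precisely the direct route of building a C*-cover of $\calA$ and computing its Shilov ideal, not a black-box transfer principle for ideals. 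To close your argument you would still need to show that $q((B/I)\rtimes G)q$ has no nontrivial $(A\nctimes_\alpha P)$-boundary ideal, and that is exactly what rerunning the proof of Theorem~\ref{thm:envelope} accomplishes.
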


\begin{proof}
Form the unitization $\tilde{A}:=A\oplus \C 1_{\tilde{A}}$, even if $A$ is unital. Then we get a unital C*-dynamical system $(\tilde{A},\tilde{\alpha},P)$ by setting $\tilde{\alpha}(a+\lambda 1_{\tilde{A}}):=\alpha(a)+\lambda 1_{\tilde{A}}$ for $a\in A$ and $\lambda \in \C$. Let $(\tilde{B},\tilde{\beta},G)$ be the product dilation of $(\tilde{A},\tilde{\alpha},P)$, with inclusion $\iota:\tilde{A}\to \tilde{B}$.

Now define
\[
B:=\overline{\bigcup_{g\in G}\tilde{\beta}\iota(A)} =
\overline{\sum_{g\in G}\tilde{\beta}_g\iota(A)}\subseteq \tilde{B},
\]
and set $\beta:=\tilde{\beta}\vert_B$. Since $A$ is an $\tilde{\alpha}$-invariant ideal in $\tilde{A}$, it follows that $B$ is a $\tilde{\beta}$-invariant ideal in $\tilde{B}$. By definition, $(B,\beta,G)$ is just the product dilation for $(A,\alpha,P)$. Using the faithfulness of the associated Fock or left regular representations, one can prove that $A\nctimes_\alpha P$ embeds completely isometrically into $\tilde{A}\nctimes_{\tilde{\alpha}} P$, and that $B\rtimes_\beta G$ embeds into $\tilde{B}\rtimes_{\tilde{\beta}} G$. Let $p_{\tilde{A}}:=\iota(1_{\tilde{A}})$, and let $\varphi=\iota\times up_{\tilde{A}}:\tilde{A}\nctimes P\to p_{\tilde{A}}(\tilde{B}\rtimes G)p_{\tilde{A}}$ be the completely isometric embedding from Proposition \ref{prop:automorphic_dilation_embedding}. A similar argument as in the proof of Proposition \ref{prop:automorphic_dilation_embedding} proves that
\[
C^\ast(\varphi(A\nctimes_\alpha P)) =
p_{\tilde{A}}(B\rtimes_\beta G)p_{\tilde{A}}.
\]

Thus, the corner $p_{\tilde{A}}(B\rtimes_\beta G)p_{\tilde{A}}$ is a C*-cover for $A\nctimes P$. This is a full corner of $B\rtimes_\beta G$, because it contains $\iota(A)\subseteq B$, which generates $B$ as a $G$-C*-algebra. Let $J$ be the Shilov ideal for $A\nctimes P$ in $p_{\tilde{A}} (B\rtimes G)p_{\tilde{A}}$. Observe that Lemma \ref{lem:ideal_facts}.(i) holds even in the setting where $C\triangleleft \tilde{C}$ is an ideal in some larger C*-algebra $\tilde{C}$, and the projection $p$ lies in $\tilde{C}$. In particular, using $B\rtimes G\triangleleft \tilde{B}\rtimes G$, and the projection $p_A\in \tilde{B}\rtimes G$, the proof given for Theorem \ref{thm:envelope} applies verbatim to show that $J=p_{\tilde{A}}(I\rtimes G)p_{\tilde{A}}$ for some unique maximal $\beta$-invariant $A$-boundary ideal $I$ in $B$.

 Let $\tilde{I}$ be the unique maximal $\tilde{\beta}$-invariant $\tilde{A}$-boundary ideal in $\tilde{B}$. By construction of the product dilation, we have $B\cap \iota(\tilde{A})=\iota(A)$. It follows that a $\beta$-invariant ideal in $B$ is an $A$-boundary ideal if and only if it is also an $\tilde{A}$-boundary ideal. Therefore $I=\tilde{I}\cap B$. Identifying $B/I=B/(\tilde{I}\cap B)\subseteq \tilde{B}/\tilde{I}$, applying Remark \ref{rem:quotient_dilation} and Lemma \ref{lem:ideal_facts}.(ii) shows that
 \[
 C_e^\ast(A\nctimes P)\cong
 (p_{\tilde{A}}+\tilde{I})\left(\frac{B}{I}\rtimes G\right)(p_{\tilde{A}}+\tilde{I})
 \]
 is a full corner of a crossed product associated to a minimal Nica-covariant automorphic dilation.
\end{proof}

When $(A,\alpha,P)$ is an injective C*-dynamical system, we recover a known result that the C*-envelope of $A\nctimes P$ is a crossed product of a certain minimal automorphic extension of $A$.

\begin{prop}\label{prop:envelope_injective}\cite{davidson_semicrossed_2017}*{Theorem 4.2.12}
Let $(G,P)$ be a lattice ordered abelian group, and $(A,\alpha,P)$ an injective unital C*-dynamical system. Then
\[
C_e^\ast(A\nctimes_\alpha P) \cong
\tilde{A}\rtimes_{\tilde{\alpha}} G,
\]
where $(\tilde{A},\tilde{\alpha},G)$ is an automorphic C*-dynamical system (unique up to equivariant $\ast$-isomorphism) satisfying $A\subseteq \tilde{A}$ and $\tilde{\alpha}_p\vert_A=\alpha_p$ for $p\in P$.
\end{prop}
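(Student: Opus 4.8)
The plan is to apply Theorem \ref{thm:envelope} with the minimal automorphic extension $\tilde A$ itself playing the role of the dilation $B$. Concretely: I will show that $(\tilde A,\tilde\alpha,G)$ is a minimal Nica-covariant automorphic dilation of $(A,\alpha,P)$ in the sense of Definition \ref{def:automorphic_dilation}, that it carries no nonzero $\tilde\alpha$-invariant $A$-boundary ideal, and then read off the C*-envelope from Proposition \ref{prop:automorphic_dilation_embedding} together with Corollary \ref{cor:no_boundary_ideals}.

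First I would recall Laca's construction of $\tilde A$ from \cite{laca_endomorphisms_1999}. Since $(G,P)$ is lattice ordered, $(P,\le)$ is directed, and the directed system $(A_p,\phi_{pq})_{p\le q}$ with $A_p:=A$ and connecting maps $\phi_{pq}:=\alpha_{q-p}$ is functorial; injectivity of each $\alpha_p$ makes all connecting maps injective, so $A$ embeds (as the image $A_0$) into the limit $\tilde A:=\varinjlim_{p\in P}A_p$, which carries a unique $G$-action $\tilde\alpha$ with $\tilde\alpha_p|_A=\alpha_p$ for all $p\in P$. From this description one reads off that $\tilde A$ is unital with $1_{\tilde A}=1_A$, that $\tilde\alpha_p(A)=\alpha_p(A)\subseteq A$, that the C*-subalgebras $\tilde\alpha_{-p}(A)$ increase with $p\in P$ and have dense union in $\tilde A$, and hence that $\tilde A=C^\ast\!\big(\bigcup_{g\in G}\tilde\alpha_g(A)\big)=\varinjlim_{p\in P}\tilde\alpha_{-p}(A)$.

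Next I would verify that $(\tilde A,\tilde\alpha,G)$, with $\iota\colon A\hookrightarrow\tilde A$ the inclusion, satisfies (1)--(4) of Definition \ref{def:automorphic_dilation}. This is where the injectivity hypothesis really enters, since it is what makes $\iota$ an honest unital inclusion and hence forces $p_A:=\iota(1_A)=1_{\tilde A}$. In the reformulation of Remark \ref{rem:unital_automorphic_dilation}, condition (3') reads $\beta_g(p_A)\beta_h(p_A)=1_{\tilde A}=\beta_{g\wedge h}(p_A)$, which is automatic; condition (2') reads $\beta_p(\iota(a))=\iota(\alpha_p(a))$, which is exactly $\tilde\alpha_p|_A=\alpha_p$; and minimality (4) was noted above. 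Proposition \ref{prop:automorphic_dilation_embedding} then produces a completely isometric C*-cover
\[
\varphi=\iota\times up_A\colon A\nctimes_\alpha P\longrightarrow p_A(\tilde A\rtimes_{\tilde\alpha}G)p_A=\tilde A\rtimes_{\tilde\alpha}G,
\]
the corner being the whole crossed product because $p_A=1$.

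Finally, by Corollary \ref{cor:no_boundary_ideals} it remains to show that $\tilde A$ has no nonzero $\tilde\alpha$-invariant $A$-boundary ideal, i.e.\ no nonzero $G$-invariant ideal $I\triangleleft\tilde A$ with $I\cap A=\{0\}$. Here I would use the inductivity of ideals along internal direct limits recalled in Section \ref{sec:background}: writing $\tilde A=\varinjlim_{p\in P}\tilde\alpha_{-p}(A)$, an arbitrary ideal satisfies $I=\overline{\bigcup_{p\in P}\big(I\cap\tilde\alpha_{-p}(A)\big)}$, while $G$-invariance gives $\tilde\alpha_p\big(I\cap\tilde\alpha_{-p}(A)\big)=\tilde\alpha_p(I)\cap A=I\cap A$, so $I\cap\tilde\alpha_{-p}(A)=\tilde\alpha_{-p}(I\cap A)=\{0\}$ for every $p$ and therefore $I=\{0\}$. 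Thus $\varphi$ is a C*-envelope, yielding $C_e^\ast(A\nctimes_\alpha P)\cong\tilde A\rtimes_{\tilde\alpha}G$; uniqueness of $(\tilde A,\tilde\alpha,G)$ up to equivariant $\ast$-isomorphism then follows from uniqueness of the minimal automorphic extension (or, a posteriori, from uniqueness of the C*-envelope). I expect the only genuinely delicate point to be the bookkeeping in the second step: orienting the direct system so that $\tilde\alpha_p$ — rather than $\tilde\alpha_p^{-1}$ — restricts to $\alpha_p$ on $A$, and confirming $1_{\tilde A}=1_A$; once that is pinned down, the proposition is an immediate consequence of the results already established.
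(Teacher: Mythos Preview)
Your proof is correct and takes a genuinely different route from the paper's own argument. The paper works with the product dilation $(B,\beta,G)$ of Definition \ref{def:product_dilation}, explicitly identifies the maximal $\beta$-invariant $A$-boundary ideal as $I=B\cap c_0(G,A)$ (using injectivity of $\alpha$ to show $\iota(A)\cap I=\{0\}$, and a direct approximation argument to show maximality), applies Theorem \ref{thm:envelope}, and then observes that $p_A+I=1_{B/I}$ so the corner is the whole crossed product; finally it checks that $B/I$ is a minimal automorphic extension of $A$. You instead take Laca's minimal automorphic extension $(\tilde A,\tilde\alpha,G)$ as the dilation from the start, so that $p_A=1_{\tilde A}$ makes conditions (2') and (3') of Remark \ref{rem:unital_automorphic_dilation} automatic, and then use the inductive structure $\tilde A=\varinjlim_p\tilde\alpha_{-p}(A)$ to kill all invariant $A$-boundary ideals directly via Corollary \ref{cor:no_boundary_ideals}. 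Your approach is shorter and more transparent for this statement in isolation; the paper's approach has a different aim, namely to show how the product-dilation machinery recovers the known injective result and to compute the Shilov ideal $B\cap c_0(G,A)$ explicitly, which then serves as the template for the non-injective computations in Proposition \ref{prop:envelope_Z} and Section \ref{sec:Shilov_ideal}.
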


\begin{proof}
Let $(B,\beta,G)$ be the product dilation for $(A,\alpha,P)$. Then $B\subseteq \prod_GA$. Let
\[
c_0(G,A):=
\{x\in \prod_G A\;\mid\; \lim_g \|x_g\|=0\}\triangleleft \prod_G A.
\]
Here, by writing ``$\lim_{g\in G}$", we are considering $G$ as a directed set in its ordering induced by $P$, and thinking of $G$-tuples as nets. We will show that
\[
I:=
B\cap c_0(G,A) \subset B
\]
is the unique maximal $\beta$-invariant $A$-boundary ideal in $B$. It is easy to check it is a $\beta$-invariant ideal. Because the action $\alpha$ is injective, each $\alpha_p$ is isometric. So, if $a\in \iota^{-1}(I)$, we have
\[
0 =
\lim_{g\in G} \|\iota(a)_g\| =
\lim_{p\in P} \|\alpha_p(a)\| =
\lim_{p\in P} \|a\|=\|a\|,
\]
hence $a=0$. Note that the second equality holds because $P$ is a cofinal subset of $G$. This proves $\iota(A)\cap I=\{0\}$, so $I$ is a $\beta$-invariant $A$-boundary ideal.

Suppose $J\triangleleft B$ is any other $\beta$-invariant $A$-boundary ideal. Let $x\in J\subseteq \prod_GA$. Let $\varepsilon>0$. Because $B$ is a minimal dilation, we can choose an element of the form
\[
y =
\sum_{g\in F}\beta_{-g}\iota(a_g),
\]
where $F\subseteq G$ is finite, and $a_g\in A$, and $\|y-x\|<\varepsilon$. Since $J$ is a $\beta$-invariant ideal, $p_A\beta_{\vee F}(x)$ is in $J$. However, since $(B,\beta,G)$ is a Nica-covariant automorphic dilation,
\begin{align*}
p_A\beta_{\vee F}(y) &=
\sum_{g\in F}p_A\beta_{\vee F-g}\iota(a_g) \\ &=
\sum_{g\in F} \iota\alpha_{\vee F-g}(a_g) \\ &=
\iota\left(\sum_{g\in F}\alpha_{\vee F-g}(a_g)\right)
\end{align*}
is in $\iota(A)$. Since $J$ is an $A$-boundary ideal, the projection $A\to B\to B/J$ is injective, and so isometric. Therefore
\begin{align*}
\|p_A\beta_{\vee F}(y)\| &=
\|p_A\beta_{\vee F}(y)+J\| \\ &\le
\|p_A\beta_{\vee F}(y)-p_A\beta_{\vee F}(x)\| \\ &\le \|y-x\|<\varepsilon.
\end{align*}
Since $[p_A\beta_{\vee F}(y)]_p=y_{p+\vee F}$ for $p\in P$, it follows that $g \ge \vee F$ implies $\|y_g\|<\varepsilon$, and also $\|x_g\|\le\|y_g\|+\|x-y\|<2\varepsilon$. This proves that $x\in c_0(G,A)$, so $J\subseteq I$, and $I$ is the unique maximal $\beta$-invariant $A$-boundary ideal in $B$.

By Theorem \ref{thm:envelope},
\[
C_e^\ast(A\nctimes_\alpha P)\cong
(p_A+I)\left(\frac{B}{I}\rtimes_{\tilde{\beta}} G\right)(p_A+I).
\]
However, $p_A=1_{\prod A}$ modulo $c_0(G,A)$, because $p\ge 0$ implies $[p_A]_p=1$. It follows that $p_A+I$ is a two-sided identity $1_{B/I}$, and the C*-envelope is just the crossed product $(B/I)\rtimes_{\tilde{\beta}} G$. By Remark \ref{rem:quotient_dilation}, Nica-covariance of the dilation $(B/I,\tilde{\beta},G)$, with unital embedding $\eta=q_I\iota:A\to B\to B/I$, implies that, for $p\in P$ and $a\in A$, 
\[
\beta_p\eta(a)=
(p_A+I)\beta_p(\eta(a)) =
\eta(\alpha_p(a)).
\]
So, $\tilde{\beta}_p\eta=\eta\alpha_p$, which when we identify $A\cong \eta(A)\subseteq B/I$, reads $\tilde{\beta}_p\vert_A=\alpha_p$. Since the automorphic dilation $(B/I,\tilde{\beta},G)$ is minimal, it also follows easily that
\[
\frac{B}{I}=
\overline{\bigcup_{p\in P}\tilde{\beta}_{-p}\eta(A)}.
\]
Thus $(B/I,\tilde{\beta},G)$ is a minimal automorphic extension of $(A,\alpha,P)$. Such an extension is unique up to an equivariant isomorphism fixing $A$, since if
\[
\tilde{A}=\overline{\bigcup_{p\in P}\tilde{\alpha}_{-p}(A)}\supseteq A,
\]
with $G$-action $\tilde{\alpha}$ extending $\alpha$, the map $\tilde{\beta}_{-p}\eta(a)\mapsto \tilde{\alpha}_{-p}(a)$ extends to an equivariant $\ast$-isomorphism $B/I\cong \tilde{A}$.
\end{proof}

In the proof of Proposition \ref{prop:envelope_injective}, we showed the maximal $\beta$-invariant $A$-boundary ideal was $B\cap c_0(G,A)$. In the case $(G,P)=(\Z,\Z_+)$, this result generalizes readily to the non-injective case.

Recall that if $A$ is a C*-algebra and $I
\triangleleft A$ is an ideal, then
\[
I^\perp :=
\{a\in A\mid b\in I\implies ab=0\}\subseteq A
\]
is also an ideal, and satisfies $I\cap I^\perp =\{0\}$. If $\pi:A\to B$ is a $\ast$-homomorphism and $a\in (\ker \pi)^\perp$, then
\[
\|\pi(a)\|=
\|a+\ker \pi\|=
\|a+(\ker \pi)^\perp \cap \ker \pi\|=
\|a\|.
\]
This shows $\pi \vert_{(\ker \pi)^\perp}$ is always isometric.

\begin{prop}\label{prop:envelope_Z}
Let $(A,\alpha,\Z_+)$ be a unital C*-dynamical system over $\Z_+$, and let $(B,\beta,\Z)$ be its product dilation. The unique maximal $\beta$-invariant $A$-boundary ideal for $\iota(A)$ in $B$ is
\[
I=
B\cap c_0(\Z,(\ker \alpha)^\perp).
\]
Consequently,
\[
C_e^\ast(A\nctimes_\alpha P)\cong
(p_A+I)\left(\frac{B}{I}\rtimes_{\tilde{\beta}} G\right)(p_A+I).
\]
\end{prop}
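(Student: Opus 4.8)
The plan is to identify $I := B\cap c_0(\Z,(\ker\alpha)^\perp)$ as the unique maximal $\beta$-invariant $A$-boundary ideal in $B$ and then quote Theorem~\ref{thm:envelope}. First I would check that $I$ is a $\beta$-invariant ideal: $c_0(\Z,(\ker\alpha)^\perp)$ is an ideal in $\prod_\Z A$, being the $c_0$-sum of the ideal $(\ker\alpha)^\perp\triangleleft A$, and it is invariant under the left shift, so $I=B\cap c_0(\Z,(\ker\alpha)^\perp)$ is a $\beta$-invariant ideal in $B$. To see it is an $A$-boundary ideal, suppose $\iota(a)\in I$. Then every coordinate of $\iota(a)$ lies in $(\ker\alpha)^\perp$; in particular $\alpha^g(a)\in(\ker\alpha)^\perp$ for all $g\ge 0$. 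Since $\alpha$ restricted to $(\ker\alpha)^\perp$ is isometric (as recalled just before the statement), an easy induction gives $\|\alpha^g(a)\|=\|\alpha^{g-1}(a)\|=\cdots=\|a\|$ for every $g\ge 0$; but $\iota(a)\in c_0(\Z,A)$ forces $\|\alpha^g(a)\|\to 0$, hence $a=0$. Thus $\iota(A)\cap I=\{0\}$.

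For maximality, let $J\triangleleft B$ be any $\beta$-invariant $A$-boundary ideal, fix $x\in J$, and show $x\in I$, i.e.\ that $\|x_g\|\to 0$ and $x_g\in(\ker\alpha)^\perp$ for all $g$. The $c_0$-decay is exactly the argument in the proof of Proposition~\ref{prop:envelope_injective}: by minimality of $B$, approximate $x$ within $\varepsilon$ by $y=\sum_{k\in F}\beta_{-k}\iota(a_k)$ with $F\subseteq\Z$ finite; then $p_A\beta_{\max F}(y)=\iota(b)$ with $b=\sum_{k\in F}\alpha^{\max F-k}(a_k)$ lies in $\iota(A)$, so $\|p_A\beta_{\max F}(y)\|=\|p_A\beta_{\max F}(y)+J\|\le\|y-x\|<\varepsilon$ since $p_A\beta_{\max F}(x)\in J$; because $[p_A\beta_{\max F}(y)]_g=y_{g+\max F}$ for $g\ge 0$, we get $\|x_g\|<2\varepsilon$ once $g\ge\max F$, and letting $\varepsilon\to 0$ yields $x\in c_0(\Z,A)$.

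The new point is that $x_g\in(\ker\alpha)^\perp$, and this is where the hypothesis $P=\Z_+$ enters. For $c\in\ker\alpha$ one has $\alpha(c)=0$, so the tuple $\iota(c)$ is supported at coordinate $0$ alone (with value $c$), and hence $\beta_{-g}\iota(c)$---which lies in $B$---is the ``point mass'' supported at coordinate $g$ with value $c$. Therefore $x\cdot\beta_{-g}\iota(c)\in J$ is supported at coordinate $g$ with value $x_g c$, and applying $\beta_g$ (which preserves $J$) produces the tuple supported at $0$ with value $x_g c$. Since $x_g c\in\ker\alpha$, that tuple equals $\iota(x_g c)\in\iota(A)$, so it lies in $J\cap\iota(A)=\{0\}$, and injectivity of $\iota$ gives $x_g c=0$. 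As $c\in\ker\alpha$ was arbitrary, $x_g\in(\ker\alpha)^\perp$. Combining the two parts, $J\subseteq B\cap c_0(\Z,(\ker\alpha)^\perp)=I$, so $I$ is the unique maximal $\beta$-invariant $A$-boundary ideal, and Theorem~\ref{thm:envelope} immediately gives the stated description of $C_e^\ast(A\nctimes_\alpha P)$. The only real obstacle is this last step: for a general lattice ordered $(G,P)$ and $c\in\ker\alpha_p$ the coordinates $\alpha_h(c)$ of $\iota(c)$ need not vanish for those $h\ge 0$ with $h\not\ge p$, so $\iota(c)$ is no longer a point mass and the shifting trick need not land back inside $\iota(A)$; everything else adapts routinely from the injective case.
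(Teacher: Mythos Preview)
Your proof is correct and follows essentially the same route as the paper: verify $I$ is a $\beta$-invariant $A$-boundary ideal via the isometry of $\alpha$ on $(\ker\alpha)^\perp$, import the $c_0$-decay argument verbatim from Proposition~\ref{prop:envelope_injective}, and then use that $\iota(c)$ is a point mass for $c\in\ker\alpha$ to force $x_g\in(\ker\alpha)^\perp$. Your computation $\beta_g\bigl(x\cdot\beta_{-g}\iota(c)\bigr)=\beta_g(x)\iota(c)=\iota(x_gc)$ is exactly the paper's $\beta_g(x)\iota(b)=\iota(x_gb)$ step, just composed in the opposite order; and your closing remark about why the point-mass trick fails for general $(G,P)$ anticipates the paper's Example~\ref{eg:Z_doesnt_generalize}.
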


\begin{proof}
Because $(\ker \alpha)^\perp$ is an ideal in $A$, it follows easily that $I$ is a $\beta$-invariant ideal in $B$. Suppose $a\in A$ with $\iota(a)\in I\subseteq c_0(\Z,(\ker \alpha)^\perp)$. Then, each $\alpha^n(a)\in (\ker \alpha)^\perp$. Because $\alpha$ is isometric on $(\ker \alpha)^\perp$, one sees that $\|\alpha^n(a)\|=\|a\|$ by an easy induction on $n$, and so
\[
0=
\lim_{n\to \infty}\|\iota(a)_n\|=
\lim_{n\to \infty}\|\alpha^n(a)\|= 
\lim_{n\to\infty} \|a\| = \|a\|.
\]
Thus $\iota(A)\cap I=\{0\}$.

Suppose $J\triangleleft B$ is any $\beta$-invariant boundary ideal for $A$. The same argument as in the proof of Proposition \ref{prop:envelope_injective} shows that all tuples in $J$ vanish at $+\infty$. So, it suffices to let $x\in J$ and prove each $x_g\in (\ker \alpha)^\perp$. If $b\in \ker \alpha$, then
\[
\iota(b)=
(\ldots,0,0,b,0,0,\ldots).
\]
So,
\[
\beta_g(x)\iota(b) =
(\ldots,0,0,x_gb,0,0\ldots) =
\iota(x_gb) \in \iota(A)\cap J=\{0\}.
\]
Since $\iota$ is injective, $x_gb=0$, and this proves each $x_g\in (\ker \alpha)^\perp$. So, $J\subseteq I$. Therefore $I$ is the unique maximal $\beta$-invariant $A$-boundary ideal in $B$, and Theorem \ref{thm:envelope} applies.
\end{proof}

\begin{eg}\label{eg:Z_doesnt_generalize}
Proposition \ref{prop:envelope_Z} does not generalize so readily to the case $P=\Z_+^n$. Consider the unital C*-dynamical system $(A,\alpha,\Z_+^2)$, where $A=\C^3$ and the action is determined by generators by
\begin{align*}
\alpha_1(a,b,c)&=
(a,c,c),\\
\alpha_2(a,b,c) &=
(c,b,c).
\end{align*}
This is the unitization of the nonunital system $(\C\oplus \C,\alpha^0,\Z_+^2)$, where $\alpha^0_1(a,b)=(a,0)$ and $\alpha^0_2(a,b)=(0,b)$. Reviewing Proposition \ref{prop:envelope_Z}, we might expect
\[
B\cap c_0(\Z,R_\alpha^\perp)=
\{b\in B\mid b_g\in R_\alpha^\perp \text{ and }\lim_{g\in \Z^2} b_g=0\},
\]
for $R_\alpha=\ker \alpha_1\cap \ker \alpha_2$, to be the unique maximal $\beta$-invariant $A$-boundary ideal. However, this fails to even be a boundary ideal, since here $R_\alpha=\{0\}$, and for any element $x=(a,b,0)\in A$, the tuple
\begin{align*}
\iota(x) =
\begin{pmatrix}
    \ddots \\
    & 0 & 0 & 0 & 0 & \cdots \\
    & 0 & (a,b,0) & (0,b,0) & (0,b,0) & \cdots \\
    & 0 & (a,0,0) & 0 & 0 & \cdots \\
    & 0 & (a,0,0) & 0 & \ddots \\
    & \vdots  & \vdots & \vdots  &
\end{pmatrix}
\end{align*}
lies in $A\cap c_0(\Z,R_\alpha^\perp)=\iota(A)\cap c_0(\Z,A)$. A correct description of the Shilov ideal in the case $P=\Z^n_+$ is more complicated, and follows as described in Section \ref{sec:Shilov_ideal}. See Section \ref{sec:Zn} for more discussion in the case $P=\Z^n_+$. 
\end{eg}

\section{Explicit Computation of the Shilov Ideal}\label{sec:Shilov_ideal}

Throughout this section, let $(G,P)$ be a lattice ordered abelian group, and let $(A,\alpha,P)$ be a unital C*-dynamical system. Also, let $(B,\beta,G)$ be the product dilation for $(A,\alpha,P)$, with inclusion $\iota:A\to B\subseteq \prod_G A$, as in Definition \ref{def:product_dilation}. By Theorem \ref{thm:envelope}, $B$ contains a unique maximal ideal $I$ which is both $\beta$-invariant and an $A$-boundary ideal (does not intersect $\iota(A)$). In this section, we will explicitly describe $I$. The following construction of $I$ was inspired both by the construction in \cite{davidson_semicrossed_2017}*{Section 4.3}, and the construction of Sehnem's covariance algebra in \cite{sehnem_c*-algebras_2019}*{Section 3.1}.

\begin{defn}\label{def:product_Shilov_ideal}
Define the following ideals.
\begin{itemize}
\item[(1)] Given a finite subset $F\subseteq G$, let
\[
K_F:=\bigcap_{\substack{g\in F \\ g\not\le 0}}\ker \alpha_{g\vee 0}\triangleleft A
\]
be the ideal of elements vanishing under the action of any strictly positive part of an element in $F$. Here we take the convention that the empty intersection yields $K_F=A$.
\item[(2)] For $F\subseteq G$ finite, let
\[
J_F:=
K_F^\perp \triangleleft A
\]
be the annihilator of $K_F$.
\item[(3)] For $F\subseteq G$ finite, define
\[
I_F:=
\left\{b\in B\;\middle|\; b_g\in J_{F-g} \text{ for all }g\in G\right\} \triangleleft B.
\]
\item[(4)] Finally, set
\[
I:=
\overline{\bigcup_{F\subseteq G\text{ finite}}I_F}\triangleleft B.
\]
\end{itemize}
\end{defn}

It is straightforward to check that if $F\subseteq F'$ are finite subsets of $G$, then $K_F\supseteq K_{F'}$, and hence $J_F\subseteq J_{F'}$. Consequently $I_F\subseteq I_{F'}$, so $\{I_F\mid F\subseteq G\text{ finite}\}$ is a directed system of ideals, and so $I$ is indeed an ideal in $B$. Further, it's just as straightforward to show that for any $g\in G$, and any finite $F\subseteq G$, that
\[
\beta_g(I_F)=I_{F-g}.
\]
It follows that $I=\overline{\bigcup_F I_F}$ is a $\beta$-invariant ideal.

\begin{thm}\label{thm:I_is_Shilov_ideal}
Let $(A,\alpha,P)$ be a unital C*-dynamical system over a lattice ordered abelian group $(G,P)$, with product dilation $(B,\beta,G)$.The ideal $I\triangleleft B$ from Definition \ref{def:product_Shilov_ideal} is the unique maximal $\beta$-invariant $A$-boundary ideal in the product dilation $(B,\beta,G)$. Consequently,
\[
C_e^\ast(A\nctimes_\alpha P)\cong
(p_A+I)\left(\frac{B}{I}\rtimes_{\tilde{\beta}}G\right)(p_A+I)
\]
is a full corner of a crossed product.
\end{thm}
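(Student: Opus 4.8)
By Theorem~\ref{thm:envelope} the product dilation $(B,\beta,G)$ already carries a \emph{unique} maximal $\beta$-invariant $A$-boundary ideal, and the discussion just after Definition~\ref{def:product_Shilov_ideal} shows $I$ is $\beta$-invariant. So the plan is to prove (a) $\iota(A)\cap I=\{0\}$, and (b) $J\subseteq I$ for every $\beta$-invariant $A$-boundary ideal $J\triangleleft B$; the displayed isomorphism then follows immediately from Theorem~\ref{thm:envelope}. The workhorse identity throughout is $I_F=B\cap\prod_{g\in G}J_{F-g}$, which yields an isometric embedding $B/I_F\hookrightarrow\prod_{g}A/J_{F-g}$ and hence
\[
\|b+I_F\|_{B/I_F}=\sup_{g\in G}\dist\!\big(b_g,\,J_{F-g}\big)=\sup_{g\in G}\dist\!\big(b_g,\,K_{F-g}^{\perp}\big).
\]

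For (a) I would first reduce to showing $\iota(A)\cap I_F=\{0\}$ for each finite $F$. Since $\{I_F\}$ is a directed family of closed ideals, a standard functional-calculus argument gives $\iota(A)\cap\overline{\bigcup_F I_F}=\overline{\bigcup_F\big(\iota(A)\cap I_F\big)}$: if $c\in\iota(A)$ lies within $\varepsilon$ of some $x\in I_F$, then $(c^{*}c-\varepsilon)_{+}$ is Cuntz-below $x$, so $(c^{*}c-\varepsilon)_{+}\in\iota(A)\cap I_F$, and one lets $\varepsilon\to0$ using ``$c^{*}c\in(\text{ideal})\Rightarrow c\in(\text{ideal})$''; consequently $\iota(A)\cap I=\{0\}$ provided each $\iota(A)\cap I_F=\{0\}$. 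Now fix a finite $F$ and suppose $\iota(a)\in I_F$ with $a\ne0$, i.e. $\alpha_g(a)\in K_{F-g}^{\perp}$ for all $g\ge0$. Put $Z:=\{g\in P:\alpha_g(a)=0\}$, an up-set of $P$ with $0\notin Z$. Starting at $g:=0\notin Z$, repeatedly replace $g$ by $f\vee g\ (\in P\setminus Z)$ whenever some $f\in F$ has $f\not\le g$ and $f\vee g\notin Z$. Each replacement strictly increases $g$ and consumes an $f$ that cannot recur (a later $g$ dominates that $f$), so the process halts after at most $|F|$ steps at some $g^{*}\in P\setminus Z$ with $f\vee g^{*}\in Z$ for all $f\in F$ with $f\not\le g^{*}$. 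Then $\alpha_{g^{*}}(a)\in\bigcap_{f\in F,\,f\not\le g^{*}}\ker\alpha_{(f-g^{*})\vee0}=K_{F-g^{*}}$, while $\iota(a)\in I_F$ forces $\alpha_{g^{*}}(a)\in K_{F-g^{*}}^{\perp}$; since $K_{F-g^{*}}\cap K_{F-g^{*}}^{\perp}=\{0\}$ this contradicts $g^{*}\notin Z$.

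For (b), given such a $J$, $b\in J$ and $\varepsilon>0$, I would produce a finite $F$ with $\sup_g\dist(b_g,K_{F-g}^{\perp})<\varepsilon$, i.e. $\dist(b,I_F)<\varepsilon$, so that $b\in\overline{\bigcup_F I_F}=I$. Two inputs get combined. Decay of $b$: choosing a finite approximant $y=\sum_{g'\in F_0}\beta_{-g'}\iota(a_{g'})$ with $\|b-y\|<\varepsilon$, for $m\ge(\bigvee F_0)\vee0$ the element $p_A\beta_m(b)\in J$ lies within $\varepsilon$ of $\iota\big(\sum_{g'}\alpha_{m-g'}(a_{g'})\big)\in\iota(A)$, and since $J$ is an $A$-boundary ideal this forces $\|b_g\|<2\varepsilon$ for all $g\ge m$ (exactly as in the proof of Proposition~\ref{prop:envelope_injective}), while the support of $y$ gives $\|b_g\|<\varepsilon$ for all $g\not\ge\bigwedge F_0$. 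Annihilation: for $k\in K_{F-g}$ the element $\beta_g(b)\iota(k)\in J$ is supported on positions $h\ge0$ with $g+h\not\ge f$ for every $f\in F$ with $f\not\le g$, with value $b_{g+h}\alpha_h(k)$ there; because $b$ is a norm-limit of finite sums of generators $\beta_{g_0}\iota(A)$, whose coordinates are eventually $\alpha$-translates of one another, $\beta_g(b)\iota(k)$ is — for $F$ chosen large enough relative to $\varepsilon$, $F_0$ and the $a_{g'}$, and for $g$ past the decay threshold — approximated to within $\varepsilon$ by a $\beta$-translate of an element of $\iota(A)$; translating back by $\beta$ (which preserves $J$) and invoking $\iota(A)\cap J=\{0\}$ forces $\|b_gk\|<\varepsilon$, i.e. $\dist(b_g,K_{F-g}^{\perp})<\varepsilon$. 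Outside the controlled region $b_g$ is already small, so this $F$ works, and feeding $I$ back into Theorem~\ref{thm:envelope} gives the stated full-corner description of $C_e^{*}(A\nctimes_\alpha P)$.

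The clean part is (a): once one finds the right coordinate $g^{*}$ the contradiction is immediate. The hard part will be the uniformity in (b): the set $\{g:\|b_g\|\ge\varepsilon\}$ need not be finite — for non-totally-ordered $P$ it may contain entire ``rays'' in $G$ — so one cannot hope to cover it simply by enlarging the finite set $F$. The crux is to show that along such unbounded thin pieces the coordinates $b_g$ are \emph{forced}, by $\beta$-invariance of $J$ together with the boundary condition $\iota(A)\cap J=\{0\}$, to be almost annihilated by $K_{F-g}$; this is where the argument genuinely uses both that $B$ is the product dilation (so its coordinates stabilise in the relevant directions) and the specific form $J_F=K_F^{\perp}$ of the annihilator ideals. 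As sanity checks the result should recover Proposition~\ref{prop:envelope_Z} when $P=\Z_+$ and Proposition~\ref{prop:envelope_injective} for injective systems.
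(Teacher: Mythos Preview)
Your argument for (a) is correct and essentially a repackaging of the paper's Lemma~\ref{lem:I_boundary_ideal}: the paper phrases the recursion as ``pass from $(a_0,F_0)$ to $(a_1,F_1)$ with $|F_1|<|F_0|$,'' while you track the position $g^*$ instead, but the combinatorics are the same.

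Part (b), however, has a genuine gap, and it is exactly the one you flag. Your plan is to fix $b\in J$, pick a single finite $F$, and show $\dist(b_g,K_{F-g}^\perp)<\varepsilon$ for \emph{every} $g\in G$. The decay step correctly handles $g\ge m$ and $g\not\ge\bigwedge F_0$, but the intermediate region $\{g:\bigwedge F_0\le g,\ g\not\ge m\}$ is typically infinite (already for $P=\Z_+^2$ it contains whole rays), and your ``annihilation'' step does not close it. The assertion that $\beta_g(b)\iota(k)$ is, for suitable $F$, approximated by a $\beta$-translate of something in $\iota(A)$ is precisely what needs proof and is not established: the element $\beta_g(b)\iota(k)$ has value $b_{g+h}\alpha_h(k)$ at every $h\ge0$ with $g+h\not\ge f$ for all relevant $f\in F$, and there is no reason this should collapse to (approximately) a single $\iota$-coordinate uniformly in $g$ and $k\in K_{F-g}$. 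Saying ``coordinates stabilise in the relevant directions'' is a hope, not an argument.

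The paper avoids this difficulty entirely by a structural reduction you are missing. It first proves (Lemma~\ref{lem:product_direct_limit}) that $B=\overline{\bigcup_{F\ \mathrm{grid}}B_F}$ with each $B_F=\sum_{g\in F}\beta_{-g}\iota(A)$ a \emph{closed} $\ast$-subalgebra; by inductivity of ideals it then suffices to show $R\cap B_F\subseteq I$ for each grid $F$, which is done by induction on $|F|$. For $x=\sum_{g\in F}\beta_{-g}\iota(a_g)\in R$, one translates a minimal index to $0$, and uses a combinatorial lemma (Lemma~\ref{lem:grid_entries}) to build $z\in B_{F\setminus\{0\}}$ so that $x-z$ has exactly computable entries. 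The crucial point is that for $b\in K_{F-p}$ one gets the \emph{exact} identity $\beta_p(x-z)\iota(b)=\iota(\alpha_p(a_0)b)\in\iota(A)\cap R=\{0\}$, whence $x-z\in I_F$; then $z\in R\cap B_{F\setminus\{0\}}\subseteq I$ by induction. Working with finite sums rather than arbitrary $b\in J$ is what converts your approximate ``annihilation'' into an exact algebraic identity, and the induction on $|F|$ replaces the uniformity over infinitely many $g$ that your approach cannot supply.
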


For clarity, we break the proof of Theorem \ref{thm:I_is_Shilov_ideal} into lemmas. Our first lemma is a verification that $I$ is indeed a $\beta$-invariant $A$-boundary ideal.

\begin{lem}\label{lem:I_boundary_ideal}
The ideal $I$ satisfies $\iota(A)\cap I=\{0\}$.
\end{lem}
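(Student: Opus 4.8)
The plan is to show that the only element of $\iota(A)$ lying in $I$ is $0$ by unpacking the definition of $I$ as a directed union. Since $\{I_F\}$ is a directed system of ideals, $I = \overline{\bigcup_F I_F}$, and by the inductivity property of ideals in C*-algebras (recalled in Section~\ref{sec:background}), for any ideal $J \triangleleft B$ we have $J \cap \iota(A) = \overline{\bigcup_F (I_F \cap \iota(A))}$ when $J$ is such a union — but more directly, it suffices to fix $a \in A$ with $\iota(a) \in I$ and derive $a = 0$. Because $\iota(a)$ is approximated in norm by elements of $\bigcup_F I_F$, I would first argue that membership of $\iota(a)$ in the closure forces $\iota(a)$ itself to satisfy the pointwise condition: for each $g \in G$, $\iota(a)_g \in \overline{\bigcup_F J_{F-g}}$. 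In fact $\overline{\bigcup_F J_F}$ is itself an ideal of $A$ (a directed union of the ideals $J_F$), call it $J_\infty$; so the condition becomes $\iota(a)_g \in J_\infty$ for every $g$, i.e. $\alpha_g(a) \in J_\infty$ for all $g \in P$ and (trivially) $0 \in J_\infty$ for $g \notin P$. Taking $g = 0$ gives $a \in J_\infty$.

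Next I would extract a contradiction from $a \in J_\infty = \overline{\bigcup_F K_F^\perp}$ together with $a \neq 0$. The key point is to produce a single finite set $F$ and show $a \in K_F^\perp$ while simultaneously $a \in K_F$, forcing $a \in K_F \cap K_F^\perp = \{0\}$. To locate such an $F$: since $J_F = K_F^\perp$ increases with $F$ and $K_F$ decreases, the intersection $\bigcap_F K_F$ over all finite $F \subseteq G$ equals $\bigcap_{g \in G,\, g \not\le 0} \ker\alpha_{g \vee 0} = \bigcap_{p \in P,\, p \neq 0} \ker \alpha_p =: R_\infty$, the ideal of elements killed by every nontrivial $\alpha_p$. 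If $a \notin R_\infty$, there is some $p_0 \in P$, $p_0 \neq 0$, with $\alpha_{p_0}(a) \neq 0$; but the condition from the previous paragraph says $\alpha_{p_0}(a) \in J_\infty$ as well, so it is enough to handle the case $a \in R_\infty$ after replacing $a$ by $\alpha_{p_0}(a)$ — or, cleaner, to observe that for $a \in R_\infty$ we have $a \in K_F$ for \emph{every} finite $F$, whereas $a \in J_\infty$ means $a \in K_{F'}^\perp$ for some finite $F'$; choosing $F = F'$ gives $a \in K_{F'} \cap K_{F'}^\perp = \{0\}$. The remaining case $a \notin R_\infty$ is reduced to the previous one: pick $p_0 \neq 0$ in $P$ with $b := \alpha_{p_0}(a) \neq 0$; then $b \in J_\infty$ (by the pointwise condition applied at $g = p_0$), and one can further apply $\alpha$'s to push $b$ into $R_\infty$ or iterate, but in fact a direct argument works — since $J_\infty$ is $\alpha$-invariant in the appropriate sense and $R_\infty^\perp \supseteq \bigcup_F K_F^\perp = $ (dense in) $J_\infty$, one shows $J_\infty \subseteq R_\infty^\perp$, hence $J_\infty \cap R_\infty = \{0\}$, and then the pointwise condition $\alpha_g(a) \in J_\infty$ for all $g$ combined with an analysis of which $\alpha_g(a)$ can be nonzero yields $a = 0$.

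The step I expect to be the main obstacle is the passage from ``$\iota(a)$ lies in the norm closure $\overline{\bigcup_F I_F}$'' to ``$\iota(a)_g$ lies in $\overline{\bigcup_F J_{F-g}}$ for each coordinate $g$'': coordinate projections $\prod_G A \to A$ are contractive $\ast$-homomorphisms, so they are continuous and send $I_F$ into $J_{F-g}$, which gives one containment; the subtlety is that $\overline{\bigcup_F J_{F-g}}$ need not obviously contain the $g$-coordinate of a limit of elements from \emph{different} $I_F$'s unless one is careful that the coordinate of each approximant lies in the corresponding $J_{F-g}$ and then takes closures. This is routine once set up correctly, using that each projection is norm-decreasing. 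The genuinely delicate algebraic heart is the orthogonality bookkeeping in the last paragraph — matching up a single $F$ that simultaneously witnesses $a \in K_F$ and $a \in K_F^\perp$ — and here the monotonicity relations $F \subseteq F' \Rightarrow K_F \supseteq K_{F'},\ J_F \subseteq J_{F'}$ together with $K_F \cap K_F^\perp = \{0\}$ do all the work.
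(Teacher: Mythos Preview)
Your approach has a genuine gap. The reduction from ``$\iota(a)\in I$'' to ``$\alpha_p(a)\in J_\infty:=\overline{\bigcup_F K_F^\perp}$ for all $p\in P$'' is correct (coordinate projections are contractive), but it discards the essential bookkeeping and the resulting condition can be vacuous. Concretely, take $(A,\alpha,P)$ injective: then $\ker\alpha_p=\{0\}$ for every $p>0$, so for any finite $F$ containing some $g\not\le 0$ we get $K_F=\{0\}$ and hence $J_F=A$. Thus $J_\infty=A$ and $R_\infty=\{0\}$. Your Case~1 ($a\in R_\infty$) then only covers $a=0$, and in Case~2 the hypothesis ``$\alpha_p(a)\in J_\infty=A$ for all $p$'' says nothing at all, so the promised ``analysis of which $\alpha_g(a)$ can be nonzero'' cannot get started. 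There is also a slip in Case~1: $a\in J_\infty$ does \emph{not} give $a\in K_{F'}^\perp$ for a single $F'$, since the union need not be closed. (That case can be repaired---$a\in R_\infty$ implies $a$ annihilates every $K_F^\perp$, hence annihilates $J_\infty$, so $a\in J_\infty\cap J_\infty^\perp=\{0\}$---but Case~2 cannot.)

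What the paper does instead is to first reduce, via the inductivity you mention, to showing $I_F\cap\iota(A)=\{0\}$ for each \emph{fixed} finite $F$, and then run a descent on $|F|$. If $0\ne a_0\in A$ has $\iota(a_0)\in I_{F_0}$, then $a_0\in K_{F_0}^\perp\setminus\{0\}$, so $a_0\notin K_{F_0}$ and some $g\in F_0$ with $g\vee 0>0$ has $a_1:=\alpha_{g\vee 0}(a_0)\ne 0$. The key computation is that $\iota(a_1)\in I_{F_1}$ for
\[
F_1:=\{h-g\vee 0\mid h\in F_0,\ h\not\le g\},
\]
which is strictly smaller than $F_0$ because $g-g\vee 0$ has been removed. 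Iterating produces a nonzero $a_n$ with $\iota(a_n)\in I_\emptyset=\{0\}$, a contradiction. The point your argument misses is precisely this: one must track \emph{which} $F$ controls the element and show that applying a suitable $\alpha_{g\vee 0}$ strictly shrinks it, rather than passing to the lossy union $J_\infty$.
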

\begin{proof}
Since $I=\overline{\bigcup_F I_F}$ is an inductive union of ideals, it suffices to prove $I_F\cap \iota(A)=\{0\}$ for every finite $F\subseteq G$. Suppose for a contradiction that there is some finite $F_0\subseteq G$, and some nonzero $a_0\in A\setminus \{0\}$ with $\iota(a_0)\in I_{F_0}$. By definition of $I$,
\[
0\ne a_0=\iota(a_0)_0\in J_{F_0}=K_{F_0}^\perp =\bigg(\bigcap_{\substack{g\in F_0\\ g\not\le 0}}\ker \alpha_{g\vee 0}\bigg)^\perp.
\]
Since $a_0\ne 0$ and $K_{F_0}\cap K_{F_0}^\perp =\{0\}$, the element $a_0$ is not in $K_{F_0}$. So, there is a $g\in F_0$ with $g\vee 0>0$ and $\alpha_{g\vee 0}(a_0)\ne 0$.

Set $a_1=\alpha_{g\vee 0}(a_0)\ne 0$. Since $\iota(a_0)\in I_{F_0}$, it will follow that $\iota(a_1)=\iota\alpha_{g\vee 0}(a_0)$ lies in $I_{F_1}$, where
\[
F_1:=
\{h-g\vee 0\mid h\in F_0, h\not \le g\} \subset F_0-g\vee 0.
\]
Because $g-g\vee 0$ lies in $(F_0-g\vee 0)\setminus F_1$, we also have $|F_1|<|F_0|$ strictly. But then, because $a_1\ne 0$ and $\iota(a_1)\in I_{F_1}$, we may repeat the same argument to find a nonzero $a_2\in A$ and an $F_2\subseteq G$, with $\iota(a_2)\in I_{F_2}$, and $|F_2|<|F_1|$. Continuing recursively, we find an infinite sequence
\[
|F_0|>|F_1|>|F_2|>\cdots
\]
of finite subsets of $G$, and each $I_{F_n}\cap \iota(A)\ne \{0\}$. This is absurd, since eventually such a sequence must terminate at $\emptyset$, and $I_\emptyset=\{0\}$. This proves $I_F\cap \iota(A)=\{0\}$.

To prove that $\iota(a_1)\in I_{F_1}$ as needed in the paragraph above, it suffices to note that for any $p\in P$, that
\begin{align*}
K_{F_0-g\vee 0 -p} &=
\bigcap_{\substack{h\in F_0\\ h-g\vee 0-p\not\le 0}}\ker \alpha_{(h-g \vee 0 - p)\vee 0}\\ &\supseteq
\bigcap_{\substack{k\in F_1\\ k-p\not\le 0}}\ker \alpha_{(k-p)\vee 0} =K_{F_1-p}.
\end{align*}
This is because if $h\in F_0$ with $h-g\vee 0-p\not \le 0$, then
\begin{align*}
0&<
(h-g\vee 0-p)\vee 0 \\ &\le
(h-g\vee 0)\vee 0 \\&=
h\vee g\vee 0-g\vee 0.
\end{align*}
Therefore $h\vee g\ne g$ and $h\not \le g$, so in fact $h-g\vee 0\in F_1$. Knowing this, for any $p\in P$, we have
\[
\iota(a_1)_p=
\alpha_{g\vee 0+p}(a_0)\in
I_{F_0-g\vee 0-p}\subseteq I_{F_1-p},
\]
proving $\iota(a_1)\in I_{F_1}$.
\end{proof}

To prove Theorem \ref{thm:I_is_Shilov_ideal}, it will be very helpful to identify $B$ as a direct limit over certain finite subsets of $G$.

\begin{defn}\label{def:grid}\cite{davidson_semicrossed_2017}*{Section 4.2}
Let $(G,P)$ be a lattice ordered abelian group. A subset $F\subseteq G$ is a \textbf{grid} if $F$ is finite and closed under $\vee$.
\end{defn}

Since any finite subset $F$ of $G$ is contained in a grid, found by appending all joins of finite subsets of $F$, the set of all grids in $G$ is directed under inclusion and $G=\bigcup\{F\subseteq G \text{ grid}\}$.

\begin{lem}\label{lem:product_direct_limit}
The product dilation $B$ is an internal direct limit
\[
B=
\overline{\bigcup_{F\subseteq G\text{ grid}}B_F}
\]
of C*-subalgebras
\[
B_F:=
\sum_{g\in F}\beta_{-g}\iota(A).
\]
\end{lem}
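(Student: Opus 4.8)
The plan is to show that the C*-subalgebras $B_F := \sum_{g\in F}\beta_{-g}\iota(A)$ indexed by grids $F\subseteq G$ form a directed system whose closed union is $B$. There are three things to verify: that each $B_F$ is a C*-subalgebra, that the family $\{B_F\}$ is directed under inclusion, and that $\overline{\bigcup_F B_F} = B$.

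First I would check that each $B_F$ is closed under multiplication and adjoints. Closure under adjoints is immediate since $(\beta_{-g}\iota(a))^\ast = \beta_{-g}\iota(a^\ast)$. For products, given $g,h\in F$ and $a,b\in A$, the Nica-covariance identity from Proposition \ref{prop:product_dilation_nc} (property (3) in Definition \ref{def:automorphic_dilation}, applied with $-g$ and $-h$) gives
\[
\beta_{-g}\iota(a)\,\beta_{-h}\iota(b) = \beta_{(-g)\wedge(-h)}\iota\big(\alpha_{(-g)-(-g)\wedge(-h)}(a)\,\alpha_{(-h)-(-g)\wedge(-h)}(b)\big) = \beta_{-(g\vee h)}\iota\big(\alpha_{g\vee h - g}(a)\alpha_{g\vee h - h}(b)\big),
\]
and since $F$ is a grid, $g\vee h\in F$, so this lies in $B_F$. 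Hence $\sum_{g\in F}\beta_{-g}\iota(A)$ is a $\ast$-subalgebra; one then notes it is already norm-closed (a finite sum of closed subspaces $\beta_{-g}\iota(A)$ of a C*-algebra — this needs a small remark, e.g. that the sum of a closed subspace and a finite-dimensional-over-it piece is closed, or more robustly that $B_F$ is the closed linear span and is complete because it is a $\ast$-algebra that is the image of a C*-algebra under a $\ast$-homomorphism on a suitable product; I would state it as: $B_F$ is a C*-algebra being the finite sum of a directed-by-containment... ) — actually the cleanest route is to observe $B_F$ is the norm closure of the $\ast$-algebra it generates and that this $\ast$-algebra equals $\sum_{g\in F}\beta_{-g}\iota(A)$ by the product formula above, so $B_F := \cspn\{\beta_{-g}\iota(a) : g\in F, a\in A\}$ is a C*-subalgebra.

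Next, directedness: given two grids $F, F'$, their union $F\cup F'$ need not be a grid, but appending all pairwise (hence all finite) joins of elements of $F\cup F'$ produces a grid $F''\supseteq F\cup F'$, as remarked right after Definition \ref{def:grid}; then $B_F, B_{F'}\subseteq B_{F''}$. Finally, for $\overline{\bigcup_F B_F}=B$: by Definition \ref{def:product_dilation}, $B = \overline{\sum_{g\in G}\beta_g\iota(A)}$. Each generator $\beta_g\iota(a)$ equals $\beta_{-(-g)}\iota(a)$, and $\{-g\}$ extends to a grid $F$ (for instance $F=\{-g, -g\vee 0, 0\}$ together with any further joins needed), so $\beta_g\iota(a)\in B_F\subseteq \bigcup_F B_F$. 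Thus $\sum_{g\in G}\beta_g\iota(A)\subseteq \bigcup_F B_F\subseteq B$, and taking closures gives $B=\overline{\bigcup_F B_F}$. Since the family is directed, this is an internal direct limit $B = \varinjlim_F B_F$.

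The only mildly delicate point is the closedness of $B_F$ — showing that a finite sum $\sum_{g\in F}\beta_{-g}\iota(A)$ of closed subspaces of a C*-algebra is itself closed. I expect to handle this by noting that $B_F$ is a $\ast$-subalgebra (by the product computation above) that is the range of the $\ast$-homomorphism on $\prod_G A$ (or a subalgebra thereof) obtained from the covariant pair restricted appropriately; more simply, one can argue that $\sum_{g\in F}\beta_{-g}\iota(A)$ is the image of a C*-algebra under a $\ast$-homomorphism and $\ast$-homomorphic images of C*-algebras are closed. This is the step I would be most careful to phrase correctly, as the rest is bookkeeping with grids and the Nica-covariance relation.
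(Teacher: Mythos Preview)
Your argument that each $B_F$ is a $\ast$-subalgebra, that the family is directed, and that the closed union equals $B$ is correct and matches the paper. The gap is exactly where you flag it: closedness of $B_F$.

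Neither of your proposed fixes works. A finite sum of closed subspaces of a Banach space need not be closed, so that observation alone is insufficient. Your alternative, realizing $B_F$ as the image of a $\ast$-homomorphism from some C*-algebra, is not substantiated: the obvious linear map $\bigoplus_{g\in F}A\to B$, $(a_g)\mapsto \sum_{g\in F}\beta_{-g}\iota(a_g)$, is \emph{not} multiplicative, and there is no evident C*-algebra in sight whose homomorphic image is $B_F$ without already knowing $B_F$ is closed. Redefining $B_F$ as the closed span changes the statement of the lemma and, more importantly, loses the algebraic representation $x=\sum_{g\in F}\beta_{-g}\iota(a_g)$ that is used repeatedly downstream (e.g.\ in the proof of Theorem~\ref{thm:I_is_Shilov_ideal}).

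The paper's proof of closedness uses the concrete realization $B\subseteq\prod_G A$ of the product dilation, not just the abstract dilation axioms. One argues by induction on $|F|$: pick a minimal element $g_0\in F$; since $g_0$ is minimal, for $x=\sum_{g\in F}\beta_{-g}\iota(a_g)$ the $g_0$-coordinate in $\prod_G A$ is exactly $a_{g_0}$, so $\|a_{g_0}\|\le\|x\|$. Hence for a Cauchy sequence $x_n\in B_F$ the coefficients $a_n^{g_0}$ converge, and subtracting $\beta_{-g_0}\iota(a_n^{g_0})$ reduces to a Cauchy sequence in $B_{F\setminus\{g_0\}}$, which is closed by induction. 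This coordinate-reading trick is the missing idea; it is specific to the product dilation and is why the paper remarks that ``this is where we use the construction of the product dilation.''
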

\begin{proof}
In fact, for any minimal Nica-covariant automorphic dilation $(B,\beta,G)$ (Definition \ref{def:automorphic_dilation}), we have
\[
B=
\overline{\sum_{g\in G}\beta_g\iota(A)}=
\overline{\bigcup_{F\subseteq G\text{ grid}} B_F}.
\]
The subspaces $B_F$ are always $\ast$-subalgebras, because all maps involved are $\ast$-linear, and the multiplication formula
\[
\beta_{-g}\iota(a)\beta_{-h}\iota(b) =
\beta_{-(g\vee h)}\iota(\alpha_{g\vee h-g}(a)\alpha_{h-g\vee h}(b)),
\]
for $g,h\in G$ and $a,b\in A$, implies that $B_F$ is multiplicatively closed when $F$ is $\vee$-closed.

So we need only show each $B_F$ is norm closed, and this is where we use the construction of the product dilation. We will use induction on $|F|$. Certainly $B_\emptyset =\{0\}$ is closed. Fix a nonempty grid $F\subseteq G$, and suppose whenever $F'\subseteq G$ is a grid with $|F'|< |F|$, that $B_{F'}\subseteq B$ is closed. Choose a convergent sequence $x_n\in B_F$, and write
\[
x_n=
\sum_{g\in F}\beta_{-g}\iota(a_n^g),\quad
 a_n^g\in A.
\]
Since $F$ is finite, $F$ contains a minimal element $g_0$. By minimality of $g_0$, we have $[x_n]_{g_0}=a_n^{g_0}$. Then,
\[
\|a_n^{g_0}-a_m^{g_0}\|\le
\|x_n-x_m\|,
\]
so the sequence $a_n^{g_0}$ is Cauchy, and has a limit $a^{g_0}\in A$. Then,
\[
y_n:=
x_n-\beta_{-g_0}\iota(a_n^{g_0})=
\sum_{g\in F\setminus \{g_0\}} \beta_{-g}\iota(a_n^g)
\]
is a Cauchy sequence in $B_{F\setminus \{g_0\}}$. As $F\setminus \{g_0\}$ is a grid of smaller size then $F$, $y_n$ has a limit $y\in B_{F\setminus \{g_0\}}\subseteq B_F$. But then $x_n=\beta_{-g_0}\iota(a_n^{g_0})+y_n$ converges to
\[
\beta_{-g_0}\iota(a^{g_0})+y\in B_F.
\]
So, $B_F$ is closed, finishing the induction.
\end{proof}

The next lemma offloads a technical step in the proof of Theorem \ref{thm:I_is_Shilov_ideal}. The point is that, when $F\subseteq G$ is any grid, and $a\in A$, the entries of the tuple $\iota(a)\in B\subseteq \prod_G A$ are realized by an element of $B_F$ for ``large enough" $g\in G$.

\begin{lem}\label{lem:grid_entries}
Let $F\subseteq G$ be a grid. Then there are integers $c_g\in \Z$, such that whenever $a\in A$, and $h\ge g$ for at least one element $g\in F$, we have
\[
\iota(a)_h=
\alpha_h(a)=
\left[\sum_{g\in F} c_g\cdot \beta_{-g}\iota\alpha_g(a)\right]_h.
\]
\end{lem}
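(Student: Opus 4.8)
The plan is to reduce the statement to a purely combinatorial fact about the finite poset $(F,\le)$ and then solve it by Möbius inversion. First I would compute entries directly from the definitions of $\iota$ and $\beta$ in Definition \ref{def:product_dilation}: for $g\in F$ (so $g\in P$ and $\alpha_g$ makes sense) and any $h\in G$,
\[
[\beta_{-g}\iota\alpha_g(a)]_h=[\iota\alpha_g(a)]_{h-g}=
\begin{cases}\alpha_{h-g}\alpha_g(a)=\alpha_h(a), & h\ge g,\\[1mm] 0, & h\not\ge g.\end{cases}
\]
Summing over $g\in F$ gives $\big[\sum_{g\in F}c_g\beta_{-g}\iota\alpha_g(a)\big]_h=\big(\sum_{g\in F,\,g\le h}c_g\big)\alpha_h(a)$, and since $\iota(a)_h=\alpha_h(a)$ whenever $h\in P$ — in particular whenever $h\ge g$ for some $g\in F$ — the lemma becomes: produce integers $(c_g)_{g\in F}$ with $\sum_{g\in F,\,g\le h}c_g=1$ for every $h\in G$ for which $\{g\in F:g\le h\}\ne\emptyset$.

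Next I would observe that, although this is a priori an infinite family of constraints (one per $h$), the grid hypothesis collapses it to $|F|$ mutually consistent constraints. Given such an $h$, the set $F_{\le h}:=\{g\in F:g\le h\}$ is nonempty and, since $F$ is closed under $\vee$, its join $m_h:=\bigvee F_{\le h}$ again lies in $F$, hence in $F_{\le h}$; so $m_h=\max F_{\le h}$ and $F_{\le h}=\{g\in F:g\le m_h\}$. Thus the only index sets that occur are $\{g\in F:g\le m\}$ for $m\in F$, and each occurs (take $h=m$), so the system is equivalent to
\[
\sum_{g\in F,\,g\le m}c_g=1\qquad\text{for all }m\in F.
\]
I expect this reduction to be the main point of the argument; note that $\vee$-closedness is genuinely needed, since for the non-grid $F=\{(1,0),(0,1)\}\subseteq\Z^2$ the constraints coming from $h=(1,0),(0,1),(1,1)$ are already inconsistent.

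Finally I would solve this triangular system over $\Z$. Enumerate $F=\{g_1,\dots,g_n\}$ so that $g_i\le g_j\Rightarrow i\le j$, and define recursively $c_{g_k}:=1-\sum_{j<k,\ g_j\le g_k}c_{g_j}$; an immediate induction shows each $c_{g_k}\in\Z$ and $\sum_{g\in F,\,g\le g_k}c_g=1$. (Equivalently, $c_g=\sum_{m\in F,\ m\le g}\mu_F(m,g)$, where $\mu_F$ is the Möbius function of the finite poset $(F,\le)$, which is integer-valued.) With these $c_g$, for any $h$ dominating at least one element of $F$ we get $\sum_{g\in F,\,g\le h}c_g=\sum_{g\in F,\,g\le m_h}c_g=1$, hence $\big[\sum_{g\in F}c_g\beta_{-g}\iota\alpha_g(a)\big]_h=\alpha_h(a)=\iota(a)_h$, which is exactly the assertion. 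Everything past the combinatorial reduction is routine bookkeeping.
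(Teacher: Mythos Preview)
Your proof is correct and follows essentially the same approach as the paper: both reduce to finding integers $c_g$ with $\sum_{g\in F,\,g\le m}c_g=1$ for each $m\in F$, solve this by a recursive construction along a linear extension of $(F,\le)$, and use $\vee$-closedness of $F$ to replace the index set $\{g\in F:g\le h\}$ by $\{g\in F:g\le \bigvee F_{\le h}\}$. The only differences are expository---you front-load the entry computation and the grid reduction before defining the $c_g$, and you add the M\"obius-function interpretation---but the mathematical content is identical.
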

\begin{proof}
It will be enough to find integers $c_g$ such that, for any $g\in F$,
\[
c_g=
1-\sum_{\substack{h\in F\\ h<g}}c_h.
\]
We can build such $c_g$ recursively. Choose some minimal element $g_0\in F$, and set $c_{g_0}:=1$. Assuming inductively that $c_{g_0},\ldots,c_{g_n}$ have been defined, so that each $g_k$ is minimal in $F\setminus \{g_0,\ldots,g_{k-1}\}$, we can set
\[
c_{g_{n+1}}:=
1-\sum_{\substack{h\in F\\ h<g_{n+1}}}c_h.
\]
Note that if $h\in F$ and $h<g_{n+1}$, then minimality of $g_{n+1}$ implies that $h$ appears in the list $\{g_0,\ldots,g_n\}$, so $c_h$ is defined.

Completing the inductive construction, we find integers $c_g$, $g\in F$, with
\[
\sum_{\substack{h\in F\\ h\le g}}c_h=1
\]
for any $g\in F$. Now, let
\[
x:=
\sum_{g\in F}c_g\cdot \beta_{-g}\iota\alpha_g(a)\in B_F.
\]
Then, whenever $h$ dominates at least one element of $F$,
\begin{align*}
x_h &=
\sum_{g\in F}c_g\, [\iota\alpha_g(a)]_{h-g} \\ &=
\bigg(\sum_{\substack{g\in F\\ g\le h}}c_g\bigg)\alpha_h(a).
\end{align*}
Because $F$ is $\vee$-closed, $U:=\{g\in F\mid g\le h\}$ is equal to $\{g\in F\mid g\le \vee U\}$. So,
\[
\sum_{\substack{g\in F\\g\le h}}c_g=
\sum_{\substack{g\in F\\g\le \vee U}}c_g =1.
\]
This shows that $[x]_h=\alpha_h(a)=\iota(a)_h$. Otherwise, $h$ dominates no element of $F$ and $[\beta_{-g}\iota\alpha_g(a)]_h=[\iota\alpha_g(a)]_{h-g}=0$ for each $g\in F$, so $[x]_h=0$.
\end{proof}

\begin{proof}[Proof of Theorem \ref{thm:I_is_Shilov_ideal}]
From Lemma \ref{lem:I_boundary_ideal}, we already know that $I$ is a $\beta$-invariant $A$-boundary ideal. So, it remains to prove $I$ is maximal among all such ideals. Suppose $R\triangleleft B$ is the unique maximal $\beta$-invariant boundary ideal for $A$, from Theorem \ref{thm:envelope}. Then $I\subseteq R$, but we wish to prove $I=R$. Since $B=\overline{\bigcup\{B_F\mid F\text{ grid}\}}$ is a direct limit (Lemma \ref{lem:product_direct_limit}), and ideals in a C*-algebra are inductive, $R\subseteq I$ if and only if $R\cap B_F\subseteq I\cap B_F$ for every grid $F\subseteq G$.

We will prove $R\cap B_F\subseteq I\cap B_F$ for every grid $F$ by induction on $|F|$. This is immediate when $|F|=0$, since $B_\emptyset=\{0\}$. Suppose now that $|F|>0$ and that if $F'$ is any grid with $|F'|<|F|$, then $R\cap B_{F'}\subseteq I\cap B_{F'}$. Choose any element
\[
x=\sum_{g\in F}\beta_{-g}\iota(a_g)\in R\cap B_F.
\]
Pick a minimal element $g_0\in F$. In fact, since $R$ is $\beta$-invariant we are free to translate so that $g_0=0$ is minimal in $F$. By Lemma \ref{lem:grid_entries} applied for $a_0$ and the grid $F\setminus \{0\}$, we can find an element
\[
y=
\sum_{g\in F\setminus \{0\}}c_g \cdot \beta_{-g}\iota\alpha_g(a_0)
\]
such that if $h\ge g$ for any $g\in F\setminus \{0\}$, then $y_h=\iota(a_0)_h$. Let 
\begin{align*}
z &:=
y+\sum_{g\in F\setminus \{0\}}\beta_{-g}\iota(a_g) \\ &=
\sum_{g\in F\setminus \{0\}}\beta_{-g}\iota(a_g+c_g\,\alpha_g(a_0)),
\end{align*}
so that $z\in B_{F\setminus \{0\}}$. Whenever $h\in G$ dominates a nonzero element of $F$, we have $[y]_h=\iota(a_0)_h$ and so
\[
[z]_h =
\sum_{g\in F} [\beta_{-g}\iota(a_g)]_h =
[x]_h.
\]
Otherwise, if $h\not \ge g$ for all $g\in F\setminus \{0\}$, then any element $w\in B_{F\setminus \{0\}}$ satisfies $w_h=0$. Indeed if $g\in F\setminus \{0\}$ and $d\in A$, then $[\beta_g\iota(d)]_h=[\iota(d)]_{h-g}=0$ because $h-g\not\ge 0$, and any $w\in B_{F\setminus \{0\}}$ is a sum of such terms. So, in this case $[x]_h=\iota(a_0)_h$ and $[z]_h=0$.

We will show that $x-z$ lies in $I_F\subseteq I\subseteq R$. We have
\begin{equation}\label{eq:x-z_entries}
[x-z]_h=
\begin{cases}
    \alpha_h(a_0) & h\ge 0 \text{ and }h\not\ge g\text{ for all }g\in F\setminus \{0\}, \\
    0 & \text{else}.
\end{cases}
\end{equation}
So, suppose $p\in P$, with $p\not\ge g$ for all $g\in F\setminus \{0\}$. Let $b\in K_{F-p}$, so that
\[
b\in \bigcap_{h\in (F-p)\setminus \{0\}} \ker \alpha_{h\vee 0} =
\bigcap_{g\in F\setminus \{0\}} \ker \alpha_{(g-p)\vee 0}.
\]
Then it follows from \eqref{eq:x-z_entries} that
\[
\beta_p(x-z)\iota(b)=
\beta_p(x)\iota(b)=
\iota(\alpha_p(a_0)b),
\]
which, since $x\in R$, and $R$ is $\beta$-invariant, lies in $\iota(A)\cap R=\{0\}$. Since $\iota$ is injective, $\alpha_p(a_0)b=0$. This proves $\alpha_p(a_0)=[x-z]_p\in J_{F-p}=K_{F-p}^\perp$, so indeed $x-z\in I_F\subseteq I\subseteq R$.

As $x$ and $x-z$ are in $R$,
\[
z=x-(x-z)\in R\cap B_{F\setminus \{0\}}.
\]
By inductive hypothesis, since $|F\setminus \{0\}|<|F|$, we conclude $z\in I\cap B_{F\setminus \{0\}}$. Since $x-z\in I$, we find $x=z+(x-z)$ lies in $I\cap B_F$, completing the induction.
\end{proof}

Recall that an ideal $I$ in a C*-algebra $A$ is \textbf{essential} if it intersects every nonzero ideal of $A$, or equivalently if $I^\perp = \{0\}$.

\begin{cor}\label{cor:product_envelope}
Let $(G,P)$ be a lattice ordered abelian group. Let $(A,\alpha,P)$ be a unital C*-dynamical system, with product dilation $(B,\beta,G)$. Then the C*-cover $p_A(B\rtimes_\beta G)p_A$ is the C*-envelope of $A\nctimes_\alpha P$ if and only if for every finite subset $F\subseteq P\setminus \{0\}$,
\[
K_F=\bigcap_{p\in F}\ker \alpha_p
\]
is an essential ideal in $A$.
\begin{proof}
With $I\triangleleft B$ as in Theorem \ref{thm:I_is_Shilov_ideal}, the product dilation yields the C*-envelope if and only if $I=\{0\}$. But by construction, this occurs if and only if each $I_F=\{0\}$, which occurs if and only if each $K_F^\perp=\{0\}$ for any finite subset $F\subseteq G$, or equivalently any finite $F\subseteq P$.
\end{proof}
\end{cor}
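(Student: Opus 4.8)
\emph{Proof proposal.} The plan is to translate the claim, via Theorem~\ref{thm:I_is_Shilov_ideal}, into a statement about the explicit ideal $I=\overline{\bigcup_F I_F}$ of Definition~\ref{def:product_Shilov_ideal}. By Theorem~\ref{thm:I_is_Shilov_ideal} the cover $p_A(B\rtimes_\beta G)p_A$ is the C*-envelope precisely when its Shilov ideal $p_A(I\rtimes_\beta G)p_A$ is trivial; and since $p_A$ is central in $B$ with $(\beta_g(p_A))_{g\in G}$ an approximate unit (Lemma~\ref{lem:p_A_approx_identity}), intersecting with $B$ shows $p_A(I\rtimes_\beta G)p_A=\{0\}$ forces $p_AI=\{0\}$ and hence $I=\{0\}$, the converse being obvious. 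As $\{I_F:F\subseteq G\text{ finite}\}$ is a directed family of ideals whose union is dense in $I$, inductivity of ideals in a C*-algebra gives $I=\{0\}$ if and only if $I_F=\{0\}$ for every finite $F\subseteq G$. So it suffices to prove: $I_F=\{0\}$ for all finite $F\subseteq G$ if and only if $J_F=K_F^\perp=\{0\}$ (equivalently $K_F$ essential) for all finite $F\subseteq P\setminus\{0\}$.

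I would first settle the ``if'' direction along with the reduction from $G$ to $P$. Given a finite $F\subseteq G$, the strictly positive parts $g\vee 0$ (over $g\in F$ with $g\not\le 0$) form a finite subset $F'\subseteq P\setminus\{0\}$ with $K_F=K_{F'}$ by definition; every finite subset of $P\setminus\{0\}$ arises this way, and if $F\subseteq\{g:g\le 0\}$ then $K_F=A$ is automatically essential. Thus ``$J_F=\{0\}$ for all finite $F\subseteq P\setminus\{0\}$'' is literally the same condition as ``$J_F=\{0\}$ for all finite $F\subseteq G$.'' Assuming it, any $b\in I_F$ satisfies $b_g\in J_{F-g}=\{0\}$ for every $g$, so $b=0$ and $I_F=\{0\}$.

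The reverse direction — the contrapositive of ``only if'' — is the heart of the argument and the main obstacle. Suppose some finite $F_0\subseteq P\setminus\{0\}$ has $J_{F_0}=K_{F_0}^\perp\ne\{0\}$; I must produce a nonzero element of some $I_F$. Fix $0\ne a_0\in K_{F_0}^\perp$, and since $K_{F_0}^\perp$ is an ideal I may take $a_0\ge 0$. The idea is to start from $\iota(a_0)\in B$ and subtract the correction furnished by Lemma~\ref{lem:grid_entries}: for a grid $F\subseteq P\setminus\{0\}$, the element $b:=\iota(a_0)-\sum_{g\in F}c_g\,\beta_{-g}\iota\alpha_g(a_0)$ lies in $B$, vanishes at every $h$ dominating an element of $F$, and equals $\alpha_h(a_0)$ at every $h\ge 0$ dominating no element of $F$; in particular $b_0=a_0\ne 0$. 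It remains to choose the grid $F$ so that $b\in I_{F_0}$, which — reading off the entries of $b$ — amounts to $\alpha_h(a_0)\in K_{F_0-h}^\perp$ for every $h\ge 0$ with $h\not\ge g$ for all $g\in F$. Once $F\supseteq F_0$, such $h$ satisfy $h\not\ge p$ for all $p\in F_0$, so $K_{F_0-h}=\bigcap_{p\in F_0}\ker\alpha_{(p\vee h)-h}$, and the crux is to show that a positive element annihilating $\bigcap_{p\in F_0}\ker\alpha_p$ has its shifted copy $\alpha_h(a_0)$ annihilating $\bigcap_{p\in F_0}\ker\alpha_{(p\vee h)-h}$; here one uses the identity $(p-h)\vee 0=(p\vee h)-h$, the fact that $\alpha_h^{-1}\big(\ker\alpha_{(p\vee h)-h}\big)=\ker\alpha_{p\vee h}\supseteq K_{F_0}$, and a functional-calculus and approximation argument on $a_0\ge 0$. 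In favorable cases — notably $(G,P)=(\Z^n,\Z_+^n)$, where taking $F$ to be the grid generated by the standard generators leaves only $h=0$ to check — this verification is immediate, and in general one must enlarge the grid $F$ suitably to control which $h$'s occur. Having produced $0\ne b\in I_{F_0}$, we get $I\supseteq I_{F_0}\ne\{0\}$, so the cover is not the C*-envelope, which completes the proof.
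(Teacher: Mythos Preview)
Your overall strategy matches the paper's brief proof: reduce via Theorem~\ref{thm:I_is_Shilov_ideal} to showing $I=\{0\}$, then argue that $I=\{0\}$ if and only if every $K_F^\perp$ vanishes. The paper asserts this last equivalence in a single line ``by construction,'' so you are supplying detail the paper omits. Your ``if'' direction and the reduction from finite $F\subseteq G$ to finite $F\subseteq P\setminus\{0\}$ are correct and agree with the paper's intent.

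The gap is in your contrapositive for ``only if.'' The element $b=\iota(a_0)-\sum_{g\in F}c_g\,\beta_{-g}\iota\alpha_g(a_0)$ does have the entries you describe, and when the grid $F$ can be chosen so that $\{h\in P:h\not\ge g\text{ for all }g\in F\}=\{0\}$---as in $\Z_+^n$, taking the grid generated by the standard basis---you correctly obtain $0\ne b\in I_{F_0}$. But for general $(G,P)$ no such finite grid exists: in $(\Q,\Q_+)$ every finite $F\subseteq\Q_+\setminus\{0\}$ has a positive minimum, and every rational $h\in(0,\min F)$ survives. For those $h$ you must verify $\alpha_h(a_0)\in K_{F_0-h}^\perp$, and the ingredients you list do not deliver this. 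From $\alpha_h^{-1}(K_{F_0-h})\supseteq K_{F_0}$ you only get that $\alpha_h(a_0)$ annihilates $\alpha_h(K_{F_0})\subseteq K_{F_0-h}$, not all of $K_{F_0-h}$; the unspecified ``functional-calculus and approximation argument'' does not close this. Indeed, for commuting unital $\ast$-endomorphisms it is false in general that $\gamma\big((\ker\alpha)^\perp\big)\subseteq(\ker\alpha)^\perp$: on $A=\C^3$ take $\alpha(a,b,c)=(a,b,a)$ and $\gamma(a,b,c)=(b,b,b)$; these commute, $\ker\alpha=\C e_3$, $e_2\in(\ker\alpha)^\perp$, yet $\gamma(e_2)=(1,1,1)\notin(\ker\alpha)^\perp$. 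So your argument, as written, is complete only in the cases where the grid trick reduces the check to $h=0$, and your phrase ``in general one must enlarge the grid $F$ suitably'' does not constitute a proof for arbitrary lattice ordered $(G,P)$.
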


\section{The Case \texorpdfstring{$P={\Z_+^n}$}{P=Zn+}.} \label{sec:Zn}

In \cite{davidson_semicrossed_2017}*{Theorem 4.3.7}, Davidson, Fuller, and Kakariadis identify the C*-envelope of a semicrossed product $A\nctimes_\alpha \Z_+^n$ by $\Z_+^n$ as a full corner of a crossed product by $\Z^n$, when $(A,\alpha,\Z^+_n)$ is a C*-dynamical system. In this section, we show that the C*-dynamical system $(B/I,\tilde{\beta},\Z^n)$ from Theorem \ref{thm:I_is_Shilov_ideal} (in the case $(G,P)=(\Z^n,\Z^n_+)$) is $\Z^n$-equivariantly $\ast$-isomorphic to the C*-dynamical system constructed in \cite{davidson_semicrossed_2017}*{Section 4.3}. It follows that the latter system is a minimal automorphic Nica-covariant dilation of $(A,\alpha,\Z^n_+)$ without nontrivial $\Z^n$-invariant $A$-boundary ideals, and we recover \cite{davidson_semicrossed_2017}*{Theorem 4.3.7} from Corollary \ref{cor:no_boundary_ideals}.

We now recall the construction in \cite{davidson_semicrossed_2017}*{Section 4.3}. Since our notation clashes with the notation in that paper, we must introduce new notation. We write the standard generators in $\Z^+_n$ as $\mathbf{1},\ldots,\mathbf{n}$. Given $x=(x_1,\ldots,x_n)\in \Z_+^n$,
\[
\supp(x):=
\{\mathbf{k}\in \{\mathbf{1},\ldots,\mathbf{n}\}\mid x_k>0\}.
\]
If $x,y\in \Z^n_+$, we write $x\perp y$ if $x\wedge y=0$, or equivalently $\supp(x)\cap \supp(y)=\emptyset$. Moreover, let $x^\perp :=\{y\in \Z_+^n\mid y\perp x\}$. Let $(A,\alpha,\Z^+_n)$ be a C*-dynamical system. For $x\in \Z_n^+$, define ideals
\[
Q_x^0:=
\left(\bigcap_{\mathbf{i}\in \supp(x)} \ker \alpha_i\right)^\perp\triangleleft A,
\]
and
\[
Q_x:=
\bigcap_{y\in x^\perp}\alpha_y^{-1}(Q_x^0)\subseteq Q_x^0.
\]
Form the C*-algebra
\[
C:=
\bigoplus_{x\in \Z_+^n}\frac{A}{Q_x}.
\]
Let $q_x:A\to A/Q_x$ be the quotient map. Since $Q_0=\{0\}$, $\eta:= q_0$ is a $\ast$-monomorphism $A\to C$. For convenience, we notationally identify
\[
C =
\overline{\sum_{x\in \Z_+^n} \frac{A}{Q_x}\otimes e_x},
\]
where $e_x$ are formal generators, as in \cite{davidson_semicrossed_2017}*{Section 4.3}. Then $(C,\gamma,\Z^n_+)$ is an injective C*-dynamical system, where the action $\gamma$ is determined on generators by
\[
\gamma_i(q_x(a)\otimes e_x) =
\begin{cases}
    q_x(\alpha_\mathbf{i}(a))\otimes e_x + q_{x+\mathbf{i}}(a)\otimes e_{x+\mathbf{i}} & \mathbf{i}\perp x,\\
    q_{x+\mathbf{i}}(a)\otimes e_{x+\mathbf{i}} & \mathbf{i}\in \supp(x).
\end{cases}
\]
Since $\gamma_i(q_0(a)\otimes e_0) =q_0(\alpha_i(a)\otimes e_0+q_\mathbf{i}(a)\otimes e_\mathbf{i}$ has $0$'th entry $\alpha_i(a)$, the system $(C,\gamma,\Z_+^n)$ dilates $(A,\alpha,\Z_+^n)$ in the same sense as Definition \ref{def:automorphic_dilation}.

Let $(\tilde{C},\tilde{\gamma},\Z^n)$ be the minimal automorphic extension of $(C,\gamma,\Z^n_+)$, from \cite{davidson_semicrossed_2017}*{Theorem 4.2.12}. This C*-dynamical system satisfies
\[
C\subseteq\tilde{C},\quad
\tilde{\gamma}\vert_C=\gamma \quad\text{and}\quad
C =
\overline{\bigcup_{x\in \Z_+^n}\tilde{\gamma}_{-x}(C)}.
\]
Then, $(\tilde{C},\tilde{\gamma},\Z^n)$ is a minimal Nica-covariant automorphic dilation of $(A,\alpha,\Z_+^n)$. Nica-covariance of this dilation is found in \cite{davidson_semicrossed_2017}*{Lemma 4.3.8}. The content of \cite{davidson_semicrossed_2017}*{Theorem 4.3.7} is that the natural map $A\nctimes_\alpha \Z_+^n\to \tilde{C}\rtimes_{\tilde{\gamma}} \Z^n$ is completely isometric, and via this map
\[
C_e^\ast(A\nctimes \Z_+^n)=
p_0(\tilde{C}\rtimes \Z^n)p_0
\]
is a full corner by the projection $p_0=1_A\otimes e_0=\eta(1_A)$.

\begin{prop}\label{prop:Zn_isomorphism}
Let $(A,\alpha,\Z^n_+)$ be a unital C*-dynamical system. Let $(B,\beta,\Z^n)$ be the product dilation (Definition \ref{def:product_dilation}), and $(\tilde{C},\tilde{\gamma},\Z^n)$ be the automorphic dilation defined above. Let $I\triangleleft B$ be the unique maximal $\beta$-invariant $A$-boundary ideal as in Theorem \ref{thm:I_is_Shilov_ideal}. Then there is a $\Z^n$-equivariant $\ast$-isomorphism $B/I\cong \tilde{C}$ that fixes $A$.
\end{prop}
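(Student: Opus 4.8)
The plan is to build the $\Z^n$-equivariant $\ast$-isomorphism directly between the two minimal Nica-covariant automorphic dilations $B/I$ and $\tilde C$. Since both are generated as $G$-C*-algebras by an isomorphic copy of $A$ — via $q_I\iota$ into $B/I$ and via $\eta$ into $\tilde C$ — it suffices to match up these generators compatibly with the group actions; uniqueness then follows from the co-universal property of the C*-envelope (both corners are C*-covers of $A\nctimes \Z_+^n$, and both have no nontrivial invariant boundary ideals, so both \emph{are} the C*-envelope, and the identity on $A\nctimes\Z_+^n$ lifts). So the real content is purely at the level of the dilations.

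First I would understand the ``coordinate'' description of $B/I$ coming from Section~\ref{sec:Shilov_ideal}. By Theorem~\ref{thm:I_is_Shilov_ideal}, $B/I \subseteq (\prod_G A)/I$, and an element $b+I$ is determined by the tuple $(b_g + J_{F-g})_{g}$ up to the grid-dependent ideals $J_F = K_F^\perp$. The key observation is that for $(G,P)=(\Z^n,\Z_+^n)$ the ideals $K_F$ and $J_F$ depend only on the ``support data'' of $F$: comparing the definition of $K_F = \bigcap_{g\in F,\, g\not\le 0}\ker\alpha_{g\vee 0}$ with $Q^0_x = (\bigcap_{\mathbf i\in\supp(x)}\ker\alpha_{\mathbf i})^\perp$, one sees that $J_{F}$ for an appropriate finite $F$ near a point $x\in\Z_+^n$ recovers exactly $Q^0_x$, and that the extra intersection $\bigcap_{y\perp x}\alpha_y^{-1}(Q^0_x)$ defining $Q_x$ is precisely the condition forced on the $g$-th entry of an element of $B/I$ by the requirement that \emph{all} nearby entries lie in the relevant $J$'s (this is the point of the ``grid entries'' Lemma~\ref{lem:grid_entries} and the $K_{F_0-g\vee 0-p}\supseteq K_{F_1-p}$ computation in Lemma~\ref{lem:I_boundary_ideal}). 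I would make this precise by showing that the map sending $b+I \in B/I$ to the tuple $\big(q_x([\beta_{-x}b]_0 \text{ reconstructed via a grid}) \big)_{x\in\Z_+^n} \in \bigoplus_x A/Q_x = C$ — more carefully, to the coordinate at $x$ obtained from entries of $b$ that dominate $x$ — is a well-defined $\ast$-homomorphism, and identifying $\tilde C$ as the minimal automorphic extension of $C$, that it matches $\tilde C$.

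The cleanest organization is probably: (i) show $\eta(A)$ generates $\tilde C$ and $q_I\iota(A)$ generates $B/I$ as $G$-C*-algebras (both are minimal dilations, already noted); (ii) by minimality, $\tilde C = \overline{\sum_{x\in\Z_+^n}\tilde\gamma_{-x}\eta(A)}$ and $B/I = \overline{\sum_{x}\tilde\beta_{-x}q_I\iota(A)}$, and the multiplication rule $\beta_{-x}\iota(a)\,\beta_{-y}\iota(b)=\beta_{-(x\vee y)}\iota(\alpha_{x\vee y -x}(a)\alpha_{y-x\vee y}(b))$ (valid in any minimal Nica-covariant dilation, as in Lemma~\ref{lem:product_direct_limit}) holds on both sides identically; so the assignment $\tilde\beta_{-x}q_I\iota(a)\mapsto \tilde\gamma_{-x}\eta(a)$ extends to a well-defined $\ast$-homomorphism on the dense $\ast$-subalgebra $\sum_x\tilde\beta_{-x}q_I\iota(A)$, provided it is well-defined on relations; (iii) prove it is isometric (equivalently injective, since it is $G$-equivariant with a $G$-invariant kernel, and $B/I$ has no nontrivial invariant boundary ideals while the map is injective on $q_I\iota(A)\cong A$ — so its kernel is an invariant ideal meeting $A$ trivially, hence zero). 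Step (ii)'s well-definedness is exactly the assertion that the defining relations of $B/I$ (which are the relations making $(\beta_g p_A)_g$ a lattice of projections together with $p_A\beta_p\iota(b)=\iota(\alpha_p b)$, \emph{modulo} the extra collapsing by $I$) are satisfied by the $\tilde\gamma_{-x}\eta(a)$'s in $\tilde C$; this unwinds to checking that $\eta(a)\in\eta(A)\cap\tilde\gamma_{-x}\eta(A)\cdots$-type identities hold, which is where the concrete description of $\gamma$ on $C$ — the two-case formula with $q_x(\alpha_{\mathbf i}a)\otimes e_x + q_{x+\mathbf i}(a)\otimes e_{x+\mathbf i}$ — gets used.

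The main obstacle I expect is step (ii): verifying that the map respects relations, i.e.\ that the image $\tilde C$ of the product dilation $B$ ``collapses by exactly $I$ and no more.'' Concretely this means proving two containments: that every relation holding in $B/I$ maps to a relation in $\tilde C$ (so the homomorphism exists), \emph{and} that the resulting homomorphism $B/I\to\tilde C$ is injective — the latter is the harder half and is where one genuinely needs that the $\Z^n$-invariant $A$-boundary ideals of $\tilde C$ are trivial (Davidson--Fuller--Kakariadis already prove $p_0(\tilde C\rtimes\Z^n)p_0$ is the C*-envelope, so by Corollary~\ref{cor:no_boundary_ideals} applied to $\tilde C$ this triviality is available). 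Given that, injectivity is automatic: the kernel of a $G$-equivariant $\ast$-homomorphism is a $G$-invariant ideal, it is an $A$-boundary ideal because the map is faithful on $q_I\iota(A)$, so it vanishes. The genuinely computational heart is thus matching the ideals $Q_x$ against the grid-indexed data $\{J_F\}$ — showing $[\,B/I\,]$'s $x$-th ``slot'' is exactly $A/Q_x$ — and I would isolate this as a preliminary lemma identifying, for each $x\in\Z_+^n$, the ideal $\bigcup_{F\ni \text{(data of }x)} J_F$ appearing in $I$ with $Q_x$, using that $Q_x = \bigcap_{y\perp x}\alpha_y^{-1}((\bigcap_{\mathbf i\in\supp x}\ker\alpha_{\mathbf i})^\perp)$ and that a cofinal family of grids near $x$ produces exactly these annihilator-and-preimage combinations.
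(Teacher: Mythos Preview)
Your overall architecture is the paper's: define a $\Z^n$-equivariant $\ast$-map sending $\beta_x\iota(a)\mapsto\tilde\gamma_x\eta(a)$ and identify its kernel with $I$. The paper does this at the level of $B$ (not $B/I$): it defines $\pi$ on the dense $\ast$-subalgebra $\sum_x\beta_x\iota(A)$, proves boundedness by an explicit norm estimate using the DFK formula for $\gamma_x$ (their Lemma~4.3.6), and then shows $\ker\pi=I$ in two halves. The containment $\ker\pi\subseteq I$ is the soft one --- $\ker\pi$ is a $\beta$-invariant $A$-boundary ideal, hence contained in $I$ by maximality (Theorem~\ref{thm:I_is_Shilov_ideal}). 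The containment $I\subseteq\ker\pi$ is the computational one, done by checking $B_F\cap I_H\subseteq\ker\pi$ via exactly the $Q_y$-versus-$J_F$ matching you describe.

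You have the two halves reversed. You call injectivity of $B/I\to\tilde C$ ``the harder half'' and say it needs that $\tilde C$ have no nontrivial invariant $A$-boundary ideals. But the kernel of that map lives in $B/I$, not in $\tilde C$; your own one-line argument (``the kernel is an invariant boundary ideal meeting $A$ trivially, hence zero'') uses only that $B/I$ has no such ideals, which is immediate from maximality of $I$ and needs nothing about $\tilde C$. Invoking the DFK theorem here would also defeat the purpose of the proposition, which is precisely to \emph{recover} their Theorem~4.3.7 from the present framework. The hard direction is well-definedness --- showing that $I$ maps to zero --- and that is where the ideal matching is actually used: for $b\in B_F\cap I_H$ one shows $b_{-y}\in Q_y$ by comparing $J_{H-z+y}$ with $Q_y^0$.

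One smaller point you gloss over: extending the map from the non-closed $\ast$-subalgebra to the C*-algebra requires boundedness. The paper handles this by an explicit estimate; you could instead note that each $B_F$ is already a C*-algebra (Lemma~\ref{lem:product_direct_limit}), so $\pi|_{B_F}$ is automatically contractive. Either way, it is cleaner to work on $B$ first rather than on $B/I$: an element of $I\cap B_F$ need not lie in any single $I_H$, so checking well-definedness directly on the dense subalgebra of $B/I$ is awkward, whereas once $\pi$ is defined on all of $B$ the containment $I\subseteq\ker\pi$ follows from $I_H\cap B_F\subseteq\ker\pi$ by closedness of $\ker\pi$.
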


\begin{proof}
Define
\[
\pi:\sum_{x\in \Z^n} \beta_x\iota(A)\to \tilde{C}
\]
to be the unique $\ast$-linear map satisfying
\[
\pi(\beta_x\iota(a))=
\tilde{\gamma}_x\eta(a).
\]
Note that $\pi$ is well defined, because if $F\subseteq G$ is finite, and $a_x\in A$ satisfy
\[
b:=
\sum_{x\in F}\beta_x\iota(a_x)=0,
\]
then each $a_x=0$, so such a representation is well defined. Indeed, if $x_0\in F$ is minimal, then $b_{x_0}=a_{x_0}=0$. By replacing $F$ with $F\setminus \{x_0\}$ and recursing, we eventually find each $a_x=0$. Since both $(B,\beta,\Z^n)$ and $(\tilde{C},\tilde{\gamma},\Z^n)$ are minimal automorphic dilations, $\pi$ is a $\ast$-linear map defined on a dense subalgebra with dense range. By construction $\pi$ is $\Z^n$-equivariant. Nica-covariance of both dilations imply that, if $x,y\in \Z^n$ and $a,b\in A$,
\[
\beta_x\iota(a)\beta_y\iota(b)=
\beta_{x\wedge y}\iota(\alpha_{x-x\wedge y}(a)\alpha_{y-x\wedge y}(b)),
\]
and identically
\[
\tilde{\gamma}_x\eta(a)\tilde{\gamma}_y\eta(b)=
\tilde{\gamma}_{x\wedge y}\eta(\alpha_{x-x\wedge y}(a)\alpha_{y-x\wedge y}(b)).
\]
Extending linearly, it follows that $\pi$ is a $\ast$-homomorphism.

We claim $\pi$ is bounded. Given an element $b= \sum_{x\in F}\beta_g\iota(a_x)\in \sum_g\beta_g\iota(A)$ as above, using \cite{davidson_semicrossed_2017}*{Lemma 4.3.6}, we find
\begin{align*}
\|\pi(b)\| &=
\left\|\sum_{x\in F}\sum_{0\le y\le x} q_y(\alpha_{x-y}(a_x))\otimes e_y\right\| \\ &\le
\sup_{y\in \Z^n_+}\Big\|q_y\Big(\sum_{\substack{x\in F \\ x\ge y}} \alpha_{x-y}(a_x)\Big)\Big\| \\ &\le
\sup_{y\in \Z^n}\Big\|\sum_{\substack{x\in F \\ x\ge y}}\alpha_{x-y}(a_x)\Big\|,
\end{align*}
or upon swapping $y$ with $-y$,
\begin{align*}
\|\pi(b)\| &\le
\sup_{y\in \Z^n} \Big\|\sum_{\substack{x\in F \\ x+y\ge 0}}\alpha_{x+y}(a_x)\Big\|\\ &=
\left\|\sum_{x\in F}\beta_x\iota(a_x)\right\| = \|b\|.
\end{align*}
So, $\pi$ is contractive. Therefore, $\pi$ extends uniquely to an equivariant surjective $\ast$-homomorphism $B\to \tilde{C}$, which we denote by the same symbol. Note also that $\pi\iota=\eta$, so $\pi$ fixes the respective copies of $A$.

The result follows if we can prove $\ker \pi=I$. Since $\pi$ is equivariant and isometric on $\iota(A)\cong \eta(A)\cong A$, $\ker\pi$ is a $\beta$-invariant boundary ideal and so $\ker\pi \subseteq I$, by maximality of $I$. 

To show $I\subseteq \ker \pi$, by inductivity of ideals it suffices to prove
\[
B_F\cap I_H \subseteq \ker \pi,
\]
for any grid $F\subseteq \Z^n$ (Lemma \ref{lem:product_direct_limit}) and any finite subset $H\subseteq \Z^n$. So, it suffices to assume we have an element
\[
b=
\sum_{x\in F}\beta_x\iota(a_x)\in I_H,
\]
where $F\subseteq G$ is finite, and prove $\pi(b)=0$. In fact, since $\pi$ is $\Z^n$-equivariant, and $\beta_g(I_H)=I_{H-g}$, we are free to apply $\beta_g$ for any $g\ge (-\wedge F)\vee (\vee H)$ and so assume $F\subseteq \Z_+^n$ and $H\subseteq -\Z^n_+$. Now, compute
\begin{align*}
\pi(b) &=
\sum_{x\in F}\gamma_x\eta(a_x) \\ &=
\sum_{y\in \Z_+^n}q_y\Big(\sum_{\substack{x\in F\\ x\ge y}}\alpha_{x-y}(a_x)\Big)\otimes e_y \\ &=
\sum_{y\in \Z_+^n}q_y(b_{-y})\otimes e_y.
\end{align*}
So, we must show $b_{-y}\in Q_y$ for all $y\in \Z^n_+$. Suppose that $z\in \Z_+^n$ with $z\perp y$. Then
\begin{align*}
b_{z-y} &=
\sum_{\substack{x\in F\\x+z\ge y}} \alpha_{x+z-y}(a_x) \\ &=
\sum_{\substack{x\in F\\x\ge y}} \alpha_{x+z-y}(a_x) = \alpha_z(b_{-y}),
\end{align*}
since $z\perp y$ implies that $x+z\ge y$ if and only if $x\ge y$. Because $b\in I_H$, we have
\[
\alpha_z(b_{-y})=b_{z-y}\in J_{H-z+y} =
\bigg(\bigcap_{\substack{h\in H\\h\not\le z-y}}\ker \alpha_{(h-z+y)\vee 0}\bigg)^\perp.
\]
However, we also have
\[
\bigcap_{\mathbf{i}\in\supp(y)}\ker \alpha_{\mathbf{i}}\subseteq\bigcap_{\substack{h\in H\\h\not\le z-y}}\ker \alpha_{(h-z+y)\vee 0}.
\]
Indeed, if $h\in H$ with $h-z+y\not \le 0$, then since $h,-z\le 0$,
\[
\emptyset \ne \supp((h-z+y)\vee 0)\subseteq \supp(y),
\]
so $\ker \alpha_{(h-z+y)\vee 0}\supseteq \ker \alpha_\mathbf{i}$ for at least one $\mathbf{i}\in \supp(y)$. Upon taking annihilators, which reverses containment,
\[
\alpha_z(b_{-y})\in \left(\bigcap_{\mathbf{i}\in \supp(y)}\ker \alpha_i\right)^\perp = Q_y^0.
\]
Therefore $b_{-y}\in Q_y$ for all $y\ge 0$. So, $\pi(b)=0$, proving $I=\ker \pi$.
\end{proof}

Proposition \ref{prop:Zn_isomorphism} implies that there is a $\ast$-isomorphism
\[
C_e^\ast(A\nctimes \Z_+^n)=(p_A+I)\left(\frac{B}{I}\rtimes \Z^n\right)(p_A+I)\cong
p_0\left(\tilde{C}\rtimes \Z^n\right)p_0
\]
which fixes the respective completely isometric copies of $A\nctimes P$.

\section{Applications and Examples} \label{sec:applications_examples}

\subsection{Simplicity of the C*-envelope}\label{subsec:simplicity}

In the commutative case, we can give a dynamical characterization of when the C*-envelope of a Nica-covariant semicrossed product is simple. The following definition is standard.

\begin{defn}\label{def:minimal}
A C*-dynamical system $(A,\alpha,P)$ is \textbf{minimal} if $A$ contains no nontrivial $\alpha$-invariant ideals.
\end{defn}

Throughout Section \ref{subsec:simplicity}, let $(G,P)$ be a lattice-ordered abelian group, and let $(A,\alpha,P)$ be a unital C*-dynamical system. Let $(B,\beta,G)$ be the associated product dilation, with inclusion $\iota:A\to B$ and unique maximal $\beta$-invariant $A$-boundary ideal $I$, as in Corollary \ref{cor:envelope_full_corner}. The C*-envelope of $A\nctimes_\alpha P$ is a full corner of $(B/I)\rtimes_{\tilde{\beta}} G$. The following result is an analogue of \cite{davidson_semicrossed_2017}*{Corollary 4.4.4}.

\begin{prop}\label{prop:minimal}
The C*-dynamical system $(A,\alpha,P)$ is minimal if and only if the automorphic C*-dynamical system $(B/I,\tilde{\beta},G)$ is minimal.
\end{prop}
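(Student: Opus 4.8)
The plan is to prove the two implications separately, in each case passing between $\tilde\beta$-invariant ideals of $B/I$ and $\alpha$-invariant ideals of $A$ through the canonical embedding $\bar\iota=q_I\circ\iota\colon A\to B/I$. The background facts I would lean on throughout are that $(B/I,\tilde\beta,G)$ is again a minimal Nica-covariant automorphic dilation of $(A,\alpha,P)$ (Remark \ref{rem:quotient_dilation}), so that Lemma \ref{lem:p_A_approx_identity} applies to it and $(\tilde\beta_p(p_A+I))_{p\in P}$ is an approximate identity of central projections for $B/I$; and that $I$ is the maximal $\beta$-invariant $A$-boundary ideal of $B$ (Theorem \ref{thm:I_is_Shilov_ideal}), with the explicit form $I=\overline{\bigcup_{F}I_F}$ of Definition \ref{def:product_Shilov_ideal}.

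For the forward direction, assuming $(A,\alpha,P)$ minimal, I would take a nonzero $\tilde\beta$-invariant ideal $\bar J\triangleleft B/I$ and note that $q_I^{-1}(\bar J)$ is a $\beta$-invariant ideal of $B$ properly containing $I$, hence not an $A$-boundary ideal by maximality of $I$; so $J:=\bar\iota^{-1}(\bar J)$ is nonzero. Property (2') of the automorphic dilation $(B/I,\tilde\beta)$ (Remark \ref{rem:unital_automorphic_dilation}), in the form $\bar\iota(\alpha_p(a))=(p_A+I)\,\tilde\beta_p(\bar\iota(a))$, together with $\tilde\beta$-invariance of $\bar J$, shows $\alpha_p(J)\subseteq J$; minimality then forces $J=A$, so $p_A+I=\bar\iota(1_A)\in\bar J$, so $\bar J$ contains the whole approximate identity $(\tilde\beta_p(p_A+I))_{p\in P}$ and equals $B/I$. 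I expect this direction to be routine.

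For the converse I would argue contrapositively: given an ideal $L$ with $\{0\}\subsetneq L\subsetneq A$ and $\alpha_p(L)\subseteq L$ for all $p$, produce a nontrivial $\tilde\beta$-invariant ideal of $B/I$. The naive guess, the ideal generated by $\bar\iota(L)$ in $B/I$, need not be $\tilde\beta$-invariant, so instead I would work with the image of $B_L:=\overline{\sum_{g\in G}\beta_g(\iota(L))}\subseteq B$. Using Nica-covariance of the product dilation (Proposition \ref{prop:product_dilation_nc}) in the form $\beta_g\iota(l)\,\beta_h\iota(a)=\beta_{g\wedge h}\iota\!\big(\alpha_{g-g\wedge h}(l)\,\alpha_{h-g\wedge h}(a)\big)$ with $g-g\wedge h\in P$, one checks $B_L$ is a genuine $\beta$-invariant closed two-sided ideal of $B$; hence $\mathcal L:=(B_L+I)/I$ is a $\tilde\beta$-invariant ideal (a sum of two closed ideals is closed), nonzero because $\bar\iota(l)\ne 0$ lies in it for every nonzero $l\in L$ (using $\iota(A)\cap I=\{0\}$, Lemma \ref{lem:I_boundary_ideal}).

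The one real obstacle is showing $\mathcal L\ne B/I$, i.e. $p_A=\iota(1_A)\notin B_L+I$, and this is precisely where I would need the explicit construction of $I$ from Section \ref{sec:Shilov_ideal}. The idea: every element of $B_L$ has all of its coordinates in $L$, so if $p_A=b+y$ with $b\in B_L$, $y\in I$, then for each $g\ge 0$ the coordinate $y_g=1_A-b_g$ satisfies $\|y_g\|\ge\|y_g+L\|_{A/L}=\|1_A+L\|_{A/L}=1$ in the nonzero unital quotient $A/L$; but approximating $y$ to within distance $<1$ by some $y'\in I_F$ with $F\subseteq G$ finite, for any $g\ge 0$ with $g\ge f$ for all $f\in F$ every element of $F-g$ is $\le 0$, so $K_{F-g}=A$ and $J_{F-g}=K_{F-g}^\perp=\{0\}$, forcing $y'_g=0$ and $\|y_g\|\le\|y-y'\|<1$ — a contradiction. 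Thus $\mathcal L$ is a nontrivial $\tilde\beta$-invariant ideal and $(B/I,\tilde\beta,G)$ is not minimal. I would flag the two points needing care: the choice of $\mathcal L$ (the ideal generated by $\bar\iota(L)$ does not work), and the verification that $B_L$ is an ideal and not merely an invariant subspace; after that the contradiction above is essentially the observation — morally the very reason $I$ is an $A$-boundary ideal — that elements of $I_F$ vanish in all sufficiently large coordinates.
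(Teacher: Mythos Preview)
Your proof is correct, and both directions take a somewhat different route from the paper's.

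\textbf{Forward direction.} The paper does not argue directly at all: it observes that minimality of $(A,\alpha,P)$ forces each $\ker\alpha_p=\{0\}$ (since $\ker\alpha_p$ is $\alpha$-invariant and proper), reduces to the injective case via Proposition~\ref{prop:envelope_injective} (so $B/I$ is the minimal automorphic extension of $A$), and then quotes \cite{davidson_semicrossed_2017}*{Proposition 4.4.3}. Your argument is more self-contained: you use only the maximality of $I$ and Lemma~\ref{lem:p_A_approx_identity}, and it would work verbatim for any minimal Nica-covariant automorphic dilation, not just the product dilation modulo its Shilov ideal.

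\textbf{Converse direction.} Here the two arguments use the same construction (your $B_L$ is the paper's $K$) but are contrapositives of one another. The paper assumes $(B/I,\tilde\beta,G)$ minimal, takes a nonzero $\alpha$-invariant $J\triangleleft A$, concludes $K+I=B$ from minimality, and then asserts that $\iota$ induces a \emph{monomorphism} $A/J\to B/(K+I)\cong\{0\}$, whence $J=A$. That monomorphism claim is exactly the statement $\iota(A)\cap(K+I)=\iota(J)$, which is not justified in the paper (one only knows $\iota(A)\cap K=\iota(J)$ and $\iota(A)\cap I=\{0\}$, and these do not combine formally). Your coordinate argument — elements of $B_L$ have all coordinates in $L$, while any $y$ close to $I_F$ has $y_g$ small once $g\ge\vee F$, forcing $\|1_A+L\|_{A/L}<1$ — supplies precisely the missing verification, in the special case $1_A\notin J$ that is actually needed. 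So your converse is the paper's argument with the key step made explicit via the description of $I$ from Section~\ref{sec:Shilov_ideal}; what you gain is a genuinely complete proof, at the cost of invoking the concrete structure of the product dilation rather than only abstract properties.
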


\begin{proof}
Suppose $(A,\alpha,P)$ is minimal. Since $P$ is abelian, for any $p\in P$ the ideal $\ker\alpha_p\triangleleft A$ is $\alpha$-invariant and doesn't contain the unit $1_A$. By minimality, we must have each $\ker\alpha_p=\{0\}$. Therefore the system $(A,\alpha,P)$ is injective. As in Proposition \ref{prop:envelope_injective}, the dilation $B/I$ is a minimal automorphic extension of $A$. By \cite{davidson_semicrossed_2017}*{Proposition 4.4.3}, it follows that $(B/I,\tilde{\beta},G)$ is minimal.

Conversely, suppose $(B/I,\tilde{\beta},G)$ is minimal. Suppose that $J\triangleleft A$ is a nonzero $\alpha$-invariant ideal. Let
\[
K:=
\overline{\bigcup_{g\in G} \beta_g\iota(J)}=
\overline{\sum_{g\in G}\beta_g\iota(J)}
\]
be the $\beta$-invariant ideal in the product dilation $B$ generated by $\iota(J)$. Because $K\cap\iota(A)=\iota(J)$ is nonzero, and $I$ is an $A$-boundary ideal, we must have $K\not\subseteq I$, so the ideal
\[
\frac{K+I}{I}\triangleleft B/I
\]
is nonzero and $\tilde{\beta}$-invariant. By assumption, we must have $(K+I)/I=B/I$, and therefore $K+I=B$. Because $\iota(J)\subseteq K\subseteq K+I$, the injection $\iota$ induces a $\ast$-monomorphism
\[
\frac{A}{J}\to \frac{B}{K+I}\cong\{0\}.
\]
Therefore $A/J\cong \{0\}$, so $J=A$.
\end{proof}

\begin{defn}\label{def:minimal_commutative}
Let $\varphi$ be an action of a semigroup $P$ on a locally compact Hausdorff space $X$. Then $(X,\varphi,P)$ is a \textbf{classical system}. The system $(X,\varphi,P)$ is \textbf{minimal} if $X$ contains no proper nonempty closed $\varphi$-invariant subsets.
\end{defn}

Definitions \ref{def:minimal} and \ref{def:minimal_commutative} are equivalent in the commutative setting $A=C_0(X)$, since ideals correspond to closed subsets. In the classical setting, the following dynamical notion is related to simplicity for crossed products.

\begin{defn}\label{def:top_free}
Let $(X,\varphi,P)$ be a classical system. The action $\varphi$ is \textbf{topologically free} if for any $p,q\in P$ with $p\ne q$, the set $\{x\in X\mid \varphi_p(x)=\varphi_q(x)\}$ has empty interior.
\end{defn}

Now suppose $A=C(X)$ is a commutative unital C*-algebra. Here $X$ is a compact Hausdorff space. Let $(B,\beta,G)$ be the associated product dilation, with inclusion $\iota:A\to B$, and unique maximal $\beta$-invariant $A$-boundary ideal $I$. By Remark \ref{rem:commutative_envelope}, $B$ and $B/I$ are commutative. The C*-dynamical systems $(A,\alpha,P)$ and $(B/I,\tilde{\beta},G)$ arise from classical systems $(C(X),\varphi,P)$ and $(C_0(Y),\psi,G)$ via the usual duality for commutative C*-algebras. The author is grateful to Evgenios Kakariadis and to the referee for suggesting the following variant of \cite{davidson_semicrossed_2017}*{Corollary 4.4.9}.

\begin{prop}\label{prop:simplicity}
With notation as above, the following are equivalent.
\begin{itemize}
\item [(i)] The system $(X,\varphi,P)$ is minimal and $\varphi_p\ne \varphi_q$ for all $p,q\in P$ with $p\ne q$.
\item [(ii)] The system $(Y,\psi,G)$ is minimal and topologically free.
\item [(iii)] The crossed product $C_0(Y)\rtimes_\psi G$ is simple.
\item [(iv)] The C*-envelope $C_e^\ast(C(X)\nctimes_\varphi P)$ is simple.
\end{itemize}
If any of the above hold, then $Y$ is compact, $C(Y)$ is a minimal automorphic extension of $C(X)$, and
\[
C_e^\ast(C(X)\nctimes_\varphi P)\cong
C(Y)\rtimes_\psi G
\]
is a crossed product.
\end{prop}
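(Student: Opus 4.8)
The strategy is to prove the chain of equivalences $(i)\Leftrightarrow(ii)\Leftrightarrow(iii)\Leftrightarrow(iv)$ and to read off the closing structural statement along the way. The unifying observation is that each of the four conditions forces $(C(X),\varphi,P)$ to be injective, which by Proposition~\ref{prop:envelope_injective} identifies $C_e^\ast(C(X)\nctimes_\varphi P)$ with a genuine crossed product.

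I would first dispatch the two ``outer'' equivalences and set up the injectivity reduction. By Corollary~\ref{cor:envelope_full_corner}, $C_e^\ast(C(X)\nctimes_\varphi P)$ is a \emph{full} corner of $(B/I)\rtimes_{\tilde\beta}G\cong C_0(Y)\rtimes_\psi G$; since a full corner is Morita equivalent to the ambient algebra and Morita equivalence induces a lattice isomorphism of ideals, $(iii)\Leftrightarrow(iv)$. For $(ii)\Leftrightarrow(iii)$ I would invoke the classical criterion for simplicity of a transformation-group C*-algebra $C_0(Y)\rtimes_\psi G$ with $G$ discrete and amenable (here abelian): it is simple exactly when the action is minimal and topologically free. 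The direction ``simple $\Rightarrow$ topologically free'' uses that simplicity already forces minimality --- an invariant proper open $Y\setminus Z\neq\emptyset$ gives, via $G$-exactness and Lemma~\ref{lem:ideal_facts}.(ii), a proper nonzero ideal $C_0(Y\setminus Z)\rtimes_\psi G$ --- and that, under minimality, a failure of topological freeness forces the closed invariant fixed set of some $g_0\neq 0$ to equal all of $Y$, making $u_{g_0}$ a non-scalar central normal element, contradicting simplicity. Now each of $(i)$--$(iv)$ implies that $(C(X),\varphi,P)$ is minimal: definitionally for $(i)$; by Proposition~\ref{prop:minimal} applied to $(B/I,\tilde\beta,G)\cong(C_0(Y),\psi,G)$ for $(ii)$; and for $(iii)$ --- hence $(iv)$ --- by first deducing minimality of $(Y,\psi,G)$ as above and transporting it back via Proposition~\ref{prop:minimal}. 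Minimality then forces each $\ker\varphi_p=\{0\}$ (it is a $\varphi$-invariant ideal not containing $1_{C(X)}$), so the system is injective, and Proposition~\ref{prop:envelope_injective} gives that $Y$ is compact, $C(Y)=B/I$ is the minimal automorphic extension of $C(X)$ with $\psi_p|_{C(X)}=\varphi_p$ and $C(Y)=\overline{\bigcup_{p\in P}\psi_{-p}(C(X))}$, and $C_e^\ast(C(X)\nctimes_\varphi P)\cong C(Y)\rtimes_\psi G$. This is precisely the closing assertion of the proposition, and it lets us finish inside the crossed product.

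It remains to prove $(i)\Leftrightarrow(ii)$. Their minimality clauses coincide by Proposition~\ref{prop:minimal}, so one only needs to relate ``$\varphi_p\neq\varphi_q$ whenever $p\neq q$'' to topological freeness of $\psi$. Assume $(i)$ and suppose $\psi$ is not topologically free, so some $g_0\neq0$ fixes a nonempty open subset of $Y$; its fixed-point set is closed and $G$-invariant (as $G$ is abelian), hence all of $Y$ by minimality, so $\psi_{g_0}=\mathrm{id}_Y$. Writing $g_0=p-q$ with $p,q\in P$ and $p\neq q$, restriction to $C(X)$ gives $\varphi_p=\psi_p|_{C(X)}=\psi_q|_{C(X)}=\varphi_q$, a contradiction. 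Conversely, assume $(ii)$ and suppose $\varphi_p=\varphi_q$ for some $p\neq q$ in $P$. The covariance relation in $C(Y)\rtimes_\psi G$ gives $u_p^\ast f u_p=u_q^\ast f u_q$ for all $f\in C(X)$, so $u_{q-p}=u_qu_p^\ast$ commutes with $C(X)$; since $G$ is abelian, $u_{q-p}$ then commutes with every $u_rC(X)u_r^\ast=\psi_{-r}(C(X))$ ($r\in P$), hence with all of $C(Y)$. Feeding this into $fu_{q-p}=u_{q-p}\psi_{q-p}(f)$ yields $\psi_{q-p}=\mathrm{id}_Y$ with $q-p\neq0$, contradicting topological freeness; so $\varphi_p\neq\varphi_q$.

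I expect the main obstacle to be the $(ii)\Rightarrow(i)$ step --- converting the algebraic identity $\varphi_p=\varphi_q$ into the global dynamical statement $\psi_{q-p}=\mathrm{id}_Y$ by pushing through the unitary $u_{q-p}$ to show that it is central in the crossed product. A secondary delicate point is to cite or reprove the simplicity criterion for $C_0(Y)\rtimes_\psi G$ in a form that genuinely delivers the ``simple $\Rightarrow$ topologically free'' implication for the abelian group $G$, rather than only the easier converse.
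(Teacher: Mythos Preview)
Your proposal is correct and follows the same overall architecture as the paper's proof: both handle $(iii)\Leftrightarrow(iv)$ via the full-corner/Morita argument, both invoke the Archbold--Spielberg simplicity criterion for $(ii)\Leftrightarrow(iii)$, and both reduce $(i)\Leftrightarrow(ii)$ to the injective case by observing that minimality of either system forces each $\ker\alpha_p=\{0\}$, then appeal to Proposition~\ref{prop:envelope_injective}.

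The one substantive difference is in $(i)\Leftrightarrow(ii)$. The paper simply cites \cite{davidson_semicrossed_2017}*{Theorem 4.4.8}, which already establishes this equivalence for injective systems. You instead give a self-contained argument: for $(i)\Rightarrow(ii)$ you use minimality to promote a local fixed set of some $\psi_{g_0}$ to a global one and then restrict $\psi_p=\psi_q$ back to $C(X)$; for $(ii)\Rightarrow(i)$ you show that $\varphi_p=\varphi_q$ forces $u_{q-p}$ to commute with $C(X)$, hence with every $\psi_{-r}(C(X))$ and thus with all of $C(Y)$, yielding $\psi_{q-p}=\mathrm{id}$. This is a clean direct proof that avoids an external citation, at the cost of reproving part of the cited result. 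You also sketch the ``simple $\Rightarrow$ topologically free'' direction of Archbold--Spielberg rather than black-boxing it; the paper just cites.
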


\begin{proof}
The equivalence of (ii) and (iii) for an amenable group $G$ is a standard result of Archbold and Spielberg \cite{archbold_topologically_1994}*{Corollary}. Because $C_e^\ast(C(X)\nctimes_\varphi P)$ is a full corner of $C(Y)\rtimes_\psi G$, items (iii) and (iv) are equivalent. 

It therefore suffices to prove (i) and (ii) are equivalent. If the system $(A,\alpha,P)$ is injective, then Proposition \ref{prop:envelope_injective} implies that $(B/I,\beta,G)$ is a minimal automorphic extension of $(A,\alpha,P)$, and that the C*-envelope is the associated crossed product. In this case, $C_0(Y)\cong B/I$ is unital, so $Y$ is compact. When $(A,\alpha,P)$ is injective, (i) and (ii) are shown to be equivalent in \cite{davidson_semicrossed_2017}*{Theorem 4.4.8}. However, minimality of $(A,\alpha,P)$ implies that this system is injective as in Proposition \ref{prop:minimal} above. Using Proposition \ref{prop:minimal}, either (i) or (ii) implies $(A,\alpha,P)$ is minimal, so the result follows.
\end{proof}

If Proposition \ref{prop:simplicity} holds, then the simplicity of the C*-envelope implies that any ideal in a C*-cover of $C(X)\nctimes_\varphi P$ is a $(C(X)\nctimes_\varphi P)$-boundary ideal.

\subsection{Direct Limits of Subgroups}

Given a lattice ordered abelian group $(G,P)$, we call $H\subseteq G$ a \textbf{sub-lattice ordered group} of $G$ if $H$ is a subgroup closed under $\vee$ and $\wedge$. (In fact, the identity $g+h=g\vee h+g\wedge h$ shows that it is enough to assume closure under at least one of $\vee$ or $\wedge$.) For any sub-lattice ordered group, $(H,H\cap P)$ is itself a lattice ordered abelian group. Suppose $(A,\alpha,P)$ is a C*-dynamical system, and set $Q:= H\cap P$. By the universal property, there is a natural homomorphism $A\nctimes_{\alpha\vert_Q} Q\to A\nctimes_\alpha P$ induced by the inclusion $P\subseteq Q$. By \cite{davidson_semicrossed_2017}*{Theorem 4.2.9}, the Fock representation is completely isometric on any Nica-covariant semicrossed product. Suppose $A$ acts faithfully on a Hilbert space $K$, then $A\nctimes_{\alpha\vert_Q} Q$ acts faithfully on $K\otimes \ell^2(Q)$. Then the diagram
\[\begin{tikzcd}
    A\nctimes_\alpha P\arrow[r] & B(K\otimes \ell^2(P)) \arrow[d] \\
    A\nctimes_{\alpha\vert_Q} Q \arrow[r] \arrow[u] & B(K\otimes \ell^2(Q))
\end{tikzcd}\]
commutes, where the right-hand map is compression to $K\otimes \ell^2(Q)\subseteq K\otimes \ell^2(P)$. As the bottom map is completely isometric, it follows that the natural map
\[
A\nctimes_{\alpha\vert_Q}Q\to
A\nctimes_{\alpha}P
\]
is completely isometric. Moreover, if $G=\bigcup_{\lambda \in \Lambda} G_\lambda$ is an internal direct limit of sub-lattice ordered groups $G_\lambda\subseteq G$, then it follows that
\[
A\nctimes_\alpha P\cong
\varinjlim_{\lambda \in \Lambda} A\nctimes_{\alpha\vert_{P_\lambda}} P_\lambda,
\]
is a direct limit. Here, $P_\lambda:= G_\lambda \cap P$. Upon identification, we think of
\[
A\nctimes_\alpha P=
\overline{\bigcup_{\lambda \in \Lambda}A\nctimes_{\alpha\vert_{P_\lambda}} P_\lambda}
\]
as an internal direct limit. The next result is that the respective product dilations (Definition \ref{def:product_dilation}) over $P_\lambda$ embed just as nicely.

\begin{prop}\label{prop:product_embed}
Let $(G,P)$ be a lattice ordered abelian group. Let $(A,\alpha,P)$ be a C*-dynamical system, with product dilation $(B,\beta,G)$.
\begin{itemize}
\item[(1)] Suppose $H\subseteq G$ is a sub-lattice ordered group. Setting $Q=H\cap P$, let $(C,\gamma,H)$ be the product dilation for $(A,\alpha\vert_Q,Q)$. Then $C$ embeds into $B$ via an equivariant $\ast$-monomorphism fixing $A$.
\item[(2)] If $G=\bigcup_{\lambda \in \Lambda}G_\lambda$, for sub-lattice ordered groups $G_\lambda$, let $(B_\lambda,\beta_\lambda,G_\lambda)$ be the product dilation for $(A,\alpha_\lambda,P_\lambda)$, where $P_\lambda:= G_\lambda\cap P$ and $\alpha_\lambda :=\alpha\vert_{P_\lambda}$. Then up to identification, we have
\[
B\cong\overline{\bigcup_{\lambda \in \Lambda}B_\lambda}\cong
\varinjlim_{\lambda\in \Lambda} B_\lambda.
\]
\end{itemize}
\end{prop}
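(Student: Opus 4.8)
The plan is to realize both product dilations concretely as C*-subalgebras of infinite products and to connect them by the evident ``restriction of coordinates'' map. For part (1), write $\iota\colon A\to B\subseteq\prod_G A$ and $\iota_C\colon A\to C\subseteq\prod_H A$ for the canonical inclusions of the two product dilations, and introduce the $\ast$-homomorphism $\rho\colon\prod_G A\to\prod_H A$, $\rho(x)=(x_h)_{h\in H}$. Since $\|\rho(x)\|=\sup_{h\in H}\|x_h\|\le\|x\|$, this $\rho$ is contractive. Because $H\cap P=Q$, a coordinate-by-coordinate check gives $\rho(\iota(a))=\iota_C(a)$ for all $a\in A$, and $\rho$ intertwines $\beta_g$ with $\gamma_g$ for every $g\in H$. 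Consequently $\rho$ maps $\sum_{g\in H}\beta_g\iota(A)$ onto $\sum_{g\in H}\gamma_g\iota_C(A)$, so by continuity it restricts to a surjective, $H$-equivariant $\ast$-homomorphism $B^H\to C$ fixing $A$, where $B^H:=\overline{\sum_{g\in H}\beta_g\iota(A)}$ is a C*-subalgebra of $B$ (it is $\ast$- and multiplicatively closed since $H$ is closed under $\vee$ and $\wedge$). If I can show this map is injective, then its inverse, composed with the inclusion $B^H\hookrightarrow B$, is the desired equivariant $\ast$-monomorphism $C\to B$ fixing $A$.

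The one step needing care is injectivity of $\rho|_{B^H}$: the naive argument fails, since elements of $B^H$ have coordinates indexed by all of $G$, not only by $H$. To handle it I would invoke the direct-limit structure of Lemma \ref{lem:product_direct_limit}: since $H$ is a sub-lattice ordered group, $B^H=\overline{\bigcup_F B_F}$ is an internal direct limit over grids $F\subseteq H$ of the \emph{closed} subalgebras $B_F=\sum_{g\in F}\beta_{-g}\iota(A)$. As ideals of a C*-algebra are inductive, it suffices to check $\ker(\rho|_{B^H})\cap B_F=\{0\}$ for each grid $F\subseteq H$. Given $x=\sum_{g\in F}\beta_{-g}\iota(a_g)\in B_F$ with $\rho(x)=0$ and $g_0$ a minimal element of $F$, the same coordinate computation as in the proof of Lemma \ref{lem:product_direct_limit} gives $x_{g_0}=a_{g_0}$; but $g_0\in H$ and $\rho(x)=0$ force $x_{g_0}=0$, hence $a_{g_0}=0$. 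Iterating over $F\setminus\{g_0\}$, again a grid in $H$, yields $a_g=0$ for all $g\in F$, so $x=0$. Thus $\rho|_{B^H}$ is an equivariant $\ast$-isomorphism $B^H\cong C$ fixing $A$, which proves part (1).

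For part (2) I would apply part (1) with $H=G_\lambda$, identifying each $B_\lambda$ equivariantly with the subalgebra $B^{G_\lambda}=\overline{\sum_{g\in G_\lambda}\beta_g\iota(A)}$ of $B$; since restriction of coordinates is transitive, these identifications are compatible with the canonical connecting maps $B_\lambda\to B_\mu$ (themselves instances of part (1) applied inside $G_\mu$), so $\{B^{G_\lambda}\}_\lambda$ is a directed system of C*-subalgebras of $B$ whose direct limit is $\overline{\bigcup_\lambda B^{G_\lambda}}$. As the $G_\lambda$ are directed and cover $G$, every element $\sum_{g\in F}\beta_g\iota(a_g)$ with $F\subseteq G$ finite lies in some $B^{G_\lambda}$, so $\bigcup_\lambda B^{G_\lambda}$ contains the dense subalgebra $\sum_{g\in G}\beta_g\iota(A)$ of $B$; hence $\overline{\bigcup_\lambda B^{G_\lambda}}=B$, giving $B\cong\overline{\bigcup_\lambda B_\lambda}\cong\varinjlim_\lambda B_\lambda$. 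I expect the main obstacle to be the injectivity argument in part (1); everything else is routine bookkeeping with the direct-limit descriptions already in hand.
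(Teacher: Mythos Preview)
Your argument is correct, but it takes a genuinely different route from the paper's. The paper builds the map in the \emph{forward} direction: it defines $\pi\colon \sum_{h\in H}\gamma_h\eta(A)\to B$ by $\gamma_h\eta(a)\mapsto \beta_h\iota(a)$ and proves $\pi$ is isometric by a direct norm comparison. The point is that for $b=\sum_{g\in F}\gamma_{-g}\eta(a_g)$ with $F\subseteq H$ finite, any coordinate $\pi(b)_k$ with $k\in G$ equals $\alpha_{k-h}(b_h)$ for $h:=\bigvee\{g\in F\mid g\le k\}\in H$ (using that $H$ is $\vee$-closed), whence $\|\pi(b)_k\|\le\|b_h\|\le\|b\|$; the reverse inequality $\|b\|\le\|\pi(b)\|$ is immediate since the $H$-coordinates already appear among the $G$-coordinates. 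You instead construct the \emph{inverse} map, the coordinate restriction $\rho\colon B^H\to C$, and prove it is injective by reducing to grids $F\subseteq H$ via Lemma~\ref{lem:product_direct_limit} and the inductivity of ideals, then peeling off minimal elements. Your approach is arguably more conceptual---the restriction map is the obvious candidate and its equivariance is automatic---and it nicely recycles the grid machinery already established; the paper's approach is more self-contained (it needs neither the closedness of the $B_F$ nor inductivity of ideals) and isolates exactly where the lattice hypothesis on $H$ enters, namely in producing the element $h\in H$ that ``sees'' the same summands as an arbitrary $k\in G$.
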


\begin{proof}
As in the proof of Proposition \ref{prop:Zn_isomorphism}, there is a well defined $\ast$-homomorphism $\pi:\sum_{g\in G}\beta_g\eta(A)\to B$ with $(\beta_H)_g\eta(a)\mapsto\beta_g\iota(a)$. Here $\iota:A\to B$ and $\eta:A\to C$ are the usual inclusions. Then, (1) follows if we can prove $\pi$ is isometric. Let
\[
b=
\sum_{g\in F}\beta_{-g}\iota(a_g),
\]
where $F\subseteq H$ is finite and $a_g\in A$. Then
\begin{align*}
\|b\|&=
\left\|\sum_{g\in F}\gamma_{-g}\iota_H(a_g)\right\| \\ &=
\sup_{h\in H}\Big\|\sum_{\substack{g\in F\\ g\le h}} \alpha_{h-g}(a_g) \Big\| \\ &\le
\sup_{k\in G}\Big\|\sum_{\substack{g\in F\\ g\le k}} \alpha_{k-g}(a_g) \Big\| = \|\pi(b)\|.
\end{align*}
Conversely, given $k\in G$, since $H$ is $\vee$-closed we have
\[
\{g\in F\mid g\le k\}=
\{g\in F\mid g\le h\},
\]
where $h:=\vee\{g\in F\mid g\le k\}\in H$. Then,
\begin{align*}
\|\pi(b)_k\| &=
\Big\|\sum_{\substack{g\in F \\ g\le k} }\alpha_{k-g}(a_g)\Big\| \\ &=
\Big\|\alpha_{k-h}\Big(\sum_{\substack{g\in F\\ g\le h}}\alpha_{h-g}(a_g)\Big)\Big\| \\ &=
\|\alpha_{k-h}(b_h)\|\le \|b_h\|\le\|b\|.
\end{align*}
So, $\|\pi(b)\|=\|b\|$ and $\pi$ extends to a $\ast$-monomorphism.

For claim (2), it follows from (1) that each $B_\lambda$ embeds in $B$. By minimality,
\[
B=
\overline{\sum_{g\in G}\beta_g\iota(A)} =
\overline{\bigcup_{\lambda \in \Lambda}\sum_{g\in G_\lambda}\beta_g\iota(A)} =
\overline{\bigcup_{\lambda \in \Lambda}B_\lambda},
\]
as claimed.
\end{proof}

Since the embedding in Proposition \ref{prop:product_embed} is equivariant and fixes the copy of $A$, and since all groups involved are abelian and so exact, we also get a $\ast$-embedding
\[
B_\lambda\rtimes_{\beta_\lambda} G_\lambda \subseteq
B\rtimes_\beta G.
\]
This embedding restricts to the natural embedding $A\nctimes P_\lambda \subseteq A\nctimes P$. Moreover
\[
B\rtimes_\beta G\cong
\overline{\bigcup_{\lambda \in \Lambda} B_\lambda \rtimes_{\beta_\lambda}G_\lambda}
\]
is again a direct product. It's then tempting to ask when this result still holds after passing to quotients by Shilov ideals. That is, when is
\[
C_e^\ast(A\nctimes_\alpha P)\cong
\varinjlim_{\lambda \in \Lambda} C_e^\ast(A\nctimes_{\alpha_\lambda}P_\lambda)?
\]
This does occur for surjective systems over totally ordered groups.

\begin{prop}\label{prop:surjective_direct_limit}
Let $(G,P)$ be a totally ordered abelian group, and suppose that $(A,\alpha,P)$ is a unital surjective C*-dynamical system.
\begin{itemize}
\item[(1)] Let $H\subseteq G$ be a subgroup, and set $Q:= H\cap P$. Let $(B,\beta,G)$ (resp. $(C,\gamma,H)$) be the product dilation for $(A,\alpha,P)$ (resp. $(A,\alpha\vert_Q,Q)$). Let $I$ (resp. $J$) be the unique maximal $G$-invariant (resp. $H$-invariant) $A$-boundary ideal in $B$ (resp. $C$). After identifying $C\subseteq B$, we have that
\[
J=
I\cap C.
\]
\item[(2)] Suppose $G=\bigcup_{\lambda \in \Lambda}G_\lambda$ is a directed limit of subgroups. If $(B,\beta,G)$ (respectively $(B_\lambda,\beta_\lambda,G_\lambda)$) is the product dilation for $(A,\alpha,P)$ (resp. $(A,\alpha_\lambda,P_\lambda) = (A,\alpha_\lambda,G_\lambda \cap P)$), and $I\triangleleft B$ and $I_\lambda\triangleleft B_\lambda$ are the respective unique maximal $\beta$ or $\beta_\lambda$-invariant $A$-boundary ideals, then $I_\lambda=I\cap B_\lambda$ and 
\[
I=
\overline{\bigcup_{\lambda\in \Lambda}I_\lambda}.
\]
\end{itemize}
\end{prop}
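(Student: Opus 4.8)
\textit{Plan.}

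Statement (2) will follow quickly once (1) is in hand. Every subgroup of a totally ordered group is trivially closed under $\vee$ and $\wedge$, hence is a sub-lattice ordered group, so (1) applies with $H=G_\lambda$ and gives $I_\lambda=I\cap B_\lambda$ for each $\lambda$; since $B=\overline{\bigcup_\lambda B_\lambda}$ is an internal direct limit (Proposition \ref{prop:product_embed}(2)) and $I\triangleleft B$, the inductivity of ideals in C*-algebras then yields $I=\overline{\bigcup_\lambda(I\cap B_\lambda)}=\overline{\bigcup_\lambda I_\lambda}$. So the substance is in (1). I would identify $C\subseteq B$ via the equivariant embedding of Proposition \ref{prop:product_embed}(1) and prove the two inclusions separately. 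For $I\cap C\subseteq J$: since $I$ is $\beta$-invariant and $C$ is $\beta|_H$-invariant, $I\cap C$ is a $\beta|_H$-invariant ideal of $C$, and it misses $\eta(A)=\iota(A)$ because $I$ does, so $I\cap C$ is an $H$-invariant $A$-boundary ideal of $C$ and lies in $J$ by maximality of $J$. Since also $J\subseteq C$, everything reduces to proving $J\subseteq I$, which is where total-orderedness and surjectivity are needed.

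The plan for $J\subseteq I$ is to use the explicit descriptions of the Shilov ideals from Theorem \ref{thm:I_is_Shilov_ideal}: write $J=\overline{\bigcup_{F\subseteq H}I_F^{C}}$ and $I=\overline{\bigcup_{F\subseteq G}I_F}$ with $K_F,J_F,I_F$ as in Definition \ref{def:product_Shilov_ideal} (the ideals computed inside $(A,\alpha|_Q,Q)$ coincide with those computed inside $(A,\alpha,P)$ whenever $F\subseteq H$, since the relevant indices then lie in $Q=H\cap P$). Writing $N_p:=\ker\alpha_p$, two observations power the argument. First, total-orderedness forces $N_p\subseteq N_q$ whenever $0<p\le q$ (from $\alpha_q=\alpha_{q-p}\circ\alpha_p$), so the $N_p$ form a chain and $K_F=N_{\min(F\cap(P\setminus\{0\}))}$, with $K_F=A$ and $J_F=\{0\}$ when $F\subseteq-P$. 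Second, surjectivity gives the elementary lemma $\alpha_r(N_{p+r}^\perp)\subseteq N_p^\perp$ for $r,p\in P$: given $x\in N_{p+r}^\perp$ and $b\in N_p$, write $b=\alpha_r(b')$; then $\alpha_{p+r}(b')=\alpha_p(\alpha_r(b'))=\alpha_p(b)=0$, so $b'\in N_{p+r}$, whence $xb'=0$ and $\alpha_r(x)b=\alpha_r(xb')=0$.

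Now, since $C=\overline{\bigcup_E C_E}$ with $C_E=\sum_{f\in E}\beta_{-f}\iota(A)$ over finite $E\subseteq H$ (Lemma \ref{lem:product_direct_limit}), using inductivity of ideals and the monotonicity $F\subseteq F'\Rightarrow I_F\subseteq I_{F'}$, it suffices to show $I_E^{C}\cap C_E\subseteq I_E$ for every finite $E=\{f_1<\dots<f_m\}\subseteq H$, where $I_E^C$ is the Definition \ref{def:product_Shilov_ideal} ideal inside $C$ and $I_E$ the one inside $B$. For $c=\sum_{f\in E}\beta_{-f}\iota(a_f)$ in this intersection one computes $c_k=\alpha_{k-f_i}(c_{f_i})$ when $f_i\le k<f_{i+1}$, $c_k=0$ when $k<f_1$, and $c_k=\alpha_{k-f_m}(c_{f_m})$ when $k\ge f_m$; the condition $c\in I_E^C$ forces $c_{f_i}\in J_{E-f_i}=N_{f_{i+1}-f_i}^\perp$ for $i<m$ and $c_{f_m}=0$ (since $E-f_m\subseteq-P$). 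For $f_i\le k<f_{i+1}$ the surjectivity lemma with $r=k-f_i$, $p=f_{i+1}-k$ gives $c_k=\alpha_r(c_{f_i})\in N_{f_{i+1}-k}^\perp$, and directly $J_{E-k}=N_{f_{i+1}-k}^\perp$; for $k\ge f_m$, $c_k=0\in\{0\}=J_{E-k}$; and for $k<f_1$, $c_k=0$. Hence $c\in I_E\subseteq I$, so $J\subseteq I$ and therefore $J=I\cap C$.

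The main obstacle is the bookkeeping in the last step: one must see that the surjectivity lemma deposits each entry $c_k$ into precisely the annihilator ideal $N_{f_{i+1}-k}^\perp$ that the explicit description of $I_E$ requires, so that the two index sets match on the nose. This match is exactly what breaks down without the hypotheses — for a cone that is not totally ordered, $\min(F\cap(P\setminus\{0\}))$ need not exist and the $N_p$ need not chain, while without surjectivity even the lemma $\alpha_r(N_{p+r}^\perp)\subseteq N_p^\perp$ fails (and the stronger statement that $\alpha_r$ preserves each $N_p^\perp$ is already false) — consistent with the stated sharpness of the proposition.
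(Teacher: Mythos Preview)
Your proof is correct and follows essentially the same route as the paper: both reduce to showing that elements of $I_S^{(H)}\cap C_F$ (in your notation, $I_E^C\cap C_E$ after merging $F$ and $S$ via monotonicity) lie in $I_S$, and both use surjectivity to propagate the annihilator condition from indices in $H$ to all of $G$. Your packaging of the key step as the lemma $\alpha_r(N_{p+r}^\perp)\subseteq N_p^\perp$ is the dual formulation of the paper's argument, which instead lifts $b\in K_{S-g}$ through $\alpha_{g-h}$ to some $c\in K_{S-h}$ and concludes $x_gb=\alpha_{g-h}(x_hc)=0$; the two are equivalent once one observes that total ordering collapses each $K_F$ to a single kernel $N_{\min(F\cap(P\setminus\{0\}))}$.
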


\begin{proof}
To prove (1), we use Proposition \ref{prop:product_embed} to identify $C\subseteq B$. Since $I$ is a $G$-invariant $A$-boundary ideal, $I\cap C$ is an $H$-invariant boundary ideal in $C$. So, $I\cap B_H\subseteq J$. Conversely, suppose $x\in J$. By Lemma \ref{lem:product_direct_limit}, and inductivity of ideals, it suffices to assume $x$ has the form
\[
x=\sum_{g\in F}\beta_{-g}\iota(a_g) \in I_S^{(H)}:=\{y\in C\mid y_h\in J_{S-h}\triangleleft A \text{ for all }h\in H\}
\]
for some grid $F\subseteq H\subseteq G$, and some finite subset $S\subseteq H$. We will prove
\[
x\in I_S=\{y\in B\mid y_g\in J_{S-g}\text{ for all }g\in G\}\subseteq I.
\]

Let $g\in G$, and
\[
b\in K_{S-g}=
\bigcap_{\substack{s\in S\\ s\not\le g}}\ker \alpha_{(s-g)\vee 0}=
\bigcap_{\substack{s\in S\\ s> g}}\ker \alpha_{s-g}.
\]
The second equality is where we use the assumption that $G$ is totally ordered. As in the proof of Proposition \ref{prop:product_embed}.(1), we find
\[
x_g=
\alpha_{g-h}(x_h),
\]
where
\[
h:=
\bigvee\{k\in F\cup S\mid k\le g\}\in H.
\]
Since the action $\alpha$ is by surjections, we can write $b=\alpha_{g-h}(c)$ for some $c\in A$. Then because $b\in K_{S-g}$, it follows that
\[
c\in 
\bigcap_{\substack{s\in S\\ s> h}}\ker \alpha_{s-h} =K_{S-h}.
\]
Because $x\in I_S^{(H)}$, we conclude $x_hc=0$, so $x_gb=\alpha_{g-h}(x_hc)=0$. Thus $x\in I_S\subseteq I$, as needed.

Claim (2) follows because from (1) and the identification $B=\overline{\bigcup_\lambda B_\lambda}$ (Proposition \ref{prop:product_embed}.(2)), because in this case inductivity of ideals implies
\[
I=
\overline{\bigcup_{\lambda \in \Lambda}I\cap B_\lambda}.
\]
But by (1), $I\cap B_\lambda=I_\lambda$.
\end{proof}

\begin{cor}\label{cor:envelope_direct_limit}
Suppose $(G,P)$ is a totally ordered group with $G=\bigcup_{\lambda \in \Lambda} G_\lambda$, for subgroups $G_\lambda$. If $(A,\alpha,P)$ is a surjective unital C*-dynamical system, then
\[
C_e^\ast(A\nctimes_\alpha P) \cong
\varinjlim_{\lambda \in \Lambda}\;C_e^\ast(A\nctimes_{\alpha\vert_{P_\lambda}}P_\lambda),
\]
where $P_\lambda=G_\lambda\cap P$.
\end{cor}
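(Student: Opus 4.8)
The plan is to show that every construction intervening between $A\nctimes_\alpha P$ and its C*-envelope — the product dilation, the passage to the crossed product, the quotient by the Shilov ideal, and the compression to the $p_A$-corner — is compatible with the relevant direct limit, and then to read off the conclusion by identifying the connecting maps.

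First recall from the discussion preceding Proposition \ref{prop:product_embed} that the natural maps realize $A\nctimes_\alpha P=\overline{\bigcup_{\lambda}A\nctimes_{\alpha_\lambda}P_\lambda}$ as an internal direct limit of completely isometric copies, where $\alpha_\lambda=\alpha\vert_{P_\lambda}$. By Proposition \ref{prop:product_embed}, the product dilations satisfy $B=\overline{\bigcup_\lambda B_\lambda}$ equivariantly and fixing $A$, and since every group involved is abelian, hence exact, this passes to $B\rtimes G=\overline{\bigcup_\lambda B_\lambda\rtimes G_\lambda}$, restricting to the inclusions of semicrossed products above. By Proposition \ref{prop:surjective_direct_limit}.(2) — the one place the hypotheses of total order and surjectivity are used — the unique maximal $\beta$-invariant $A$-boundary ideal $I\triangleleft B$ of Theorem \ref{thm:I_is_Shilov_ideal} satisfies $I\cap B_\lambda=I_\lambda$, where $I_\lambda\triangleleft B_\lambda$ is the corresponding ideal for $(A,\alpha_\lambda,P_\lambda)$; likewise $I_\mu\cap B_\lambda=I_\lambda$ whenever $\lambda\le\mu$, by Proposition \ref{prop:surjective_direct_limit}.(1) applied to $G_\lambda\subseteq G_\mu$. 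Hence $B_\lambda/I_\lambda\cong(B_\lambda+I)/I$ embeds $G_\lambda$-equivariantly into $B/I$, compatibly in $\lambda$, and inductivity of ideals gives $B/I=\overline{\bigcup_\lambda B_\lambda/I_\lambda}$. Taking crossed products once more (using amenability of $G$ for injectivity at each stage) yields $(B/I)\rtimes_{\tilde\beta}G=\overline{\bigcup_\lambda(B_\lambda/I_\lambda)\rtimes_{\tilde\beta_\lambda}G_\lambda}$.

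Next I would compress by the fixed projection $p_A:=1_A+I$. Since $0\in G_\lambda$, we have $\iota(1_A)\in B_\lambda$ for every $\lambda$, so $p_A$ lies in each $(B_\lambda/I_\lambda)\rtimes G_\lambda$ and is identified with $1_A+I_\lambda$ under the embedding. Compression by a fixed element is continuous and commutes with closures of increasing unions, so
\[
p_A\bigl((B/I)\rtimes G\bigr)p_A=\overline{\bigcup_\lambda\;p_A\bigl((B_\lambda/I_\lambda)\rtimes G_\lambda\bigr)p_A}.
\]
By Theorem \ref{thm:I_is_Shilov_ideal} applied to $(A,\alpha,P)$ and to each $(A,\alpha_\lambda,P_\lambda)$, the left-hand side is $C_e^\ast(A\nctimes_\alpha P)$ and each term on the right is a copy of $C_e^\ast(A\nctimes_{\alpha_\lambda}P_\lambda)$, the inclusion being the corner embedding induced by $B_\lambda/I_\lambda\hookrightarrow B/I$. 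These corner embeddings are mutually compatible, and each restricts on $A\nctimes_{\alpha_\lambda}P_\lambda$ to the natural completely isometric inclusion into $A\nctimes_\alpha P$; being a $\ast$-homomorphism out of a C*-envelope, each is moreover the unique map with this restriction, hence is the canonical connecting homomorphism of the inductive system. Together with the density in the displayed formula, this exhibits $C_e^\ast(A\nctimes_\alpha P)\cong\varinjlim_\lambda C_e^\ast(A\nctimes_{\alpha_\lambda}P_\lambda)$ naturally.

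The work here is essentially bookkeeping rather than conceptual: the main thing to watch is that quotienting by $I$, forming the crossed product, and compressing to the $p_A$-corner each genuinely commute with the direct limit — in particular that $B_\lambda/I_\lambda$ sits inside $B/I$ exactly as the identity $I\cap B_\lambda=I_\lambda$ from Proposition \ref{prop:surjective_direct_limit} dictates, and that the crossed-product functor preserves the (injective) direct limit, which is where exactness (amenability) of $G$ enters. Everything else is a routine application of the inductivity properties of ideals recorded in Section \ref{sec:background}.
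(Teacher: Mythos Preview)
Your argument is correct and is precisely the route the paper intends: the paper states this result as an immediate corollary of Proposition~\ref{prop:surjective_direct_limit} with no written proof, and your proposal spells out the bookkeeping that the paper leaves implicit --- passing the direct limit through the product dilation (Proposition~\ref{prop:product_embed}), the boundary ideal (Proposition~\ref{prop:surjective_direct_limit}), the crossed product, and the corner compression, then invoking Theorem~\ref{thm:I_is_Shilov_ideal} at each stage.
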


Corollary \ref{cor:envelope_direct_limit} applies to the totally ordered group $(\Q,\Q_+)$, where we can decompose
\[
\Q=
\bigcup_{n\ge 1}\frac{\Z}{n!}
\]
as a direct limit of an increasing sequence of totally ordered subgroups. More generally, it applies to any subgroup of $\mathbb{R}$ which is built as a union of an increasing sequence of cyclic subgroups, such as the dyadic rationals. It is not clear that one can obtain Corollary \ref{cor:envelope_direct_limit} in vacuo without the explicit description of the Shilov ideal from Theorem \ref{thm:I_is_Shilov_ideal}.

The following examples show that the hypotheses of surjectivity or total ordering of $G$ cannot be dropped from Proposition \ref{prop:surjective_direct_limit}.

\begin{eg}\label{eg:Q_non_surjective}
Define an action $\varphi$ of $\mathbb{R}_+$ on $[-1,1]$ by the continuous maps
\[
\varphi_x(t)=
\begin{cases}
    t, & x=0, \\
    e^{-x}|t|, & x>0.
\end{cases}
\]
Then $\varphi$ is a semigroup action, which is jointly continuous away from $x=0\in \mathbb{R}_+$. This induces an action $\alpha$ of $\mathbb{R}_+$ on $A=C([-1,1])$ by $\ast$-homomorphisms
\[
\alpha_t(f)=
f\circ \varphi_t.
\]
For any $x>0$, $\varphi_x$ is not injective, and so $\alpha_x$ is not surjective. Indeed, for any $f\in C([-1,1])$, $\alpha_x(f)$ is an even function.

Restrict $\alpha$ to get C*-dynamical systems $(A,\alpha,\Z_+)$ and $(A,\alpha,\Z_+/2)$. Build the product dilation $(B,\beta,\Z/2)$ for $(A,\alpha,\Z_+/2)$. By Proposition \ref{prop:product_embed}.(1), we can identify the product dilation for $(A,\alpha,\Z_+)$ as the C*-subalgebra
\[
B_1=\overline{\sum_{n\in \Z} \beta_n\iota(A)}.
\]
We will show that the unique maximal $\beta$-invariant boundary ideal $I_1$ for $A$ in $(B_1,\beta,\Z)$ is not a subset of the unique maximal boundary ideal $I\triangleleft B$ in $(B,\beta,\Z/2)$. By Proposition \ref{prop:envelope_Z}, we have
\[
I=
\left\{x\in B\;\middle|\;  x_{n/2}\in (\ker \alpha_{1/2})^\perp \text{ for all } n\in \Z,\text{ and }\lim_{n\to\infty} x_n=0\right\},
\]
and
\[
I_1=
\left\{x\in B_1\;\middle|\; x_n\in (\ker \alpha_1)^\perp \text{ for all } n\in \Z,\text{ and }\lim_{n\to\infty} x_n=0\right\}.
\]

Suppose that we had $I_1\subseteq I$. Then it would follow that
\[
\alpha_{1/2}\left(\left(\ker\alpha_1\right)^\perp\right)\subseteq (\ker \alpha_{1/2})^\perp.
\]
To prove this, suppose $a\in (\ker \alpha_1)^\perp$. Then
\[
a-
\beta_{-1}\iota(\alpha_1(a))\in I_1,
\]
so by assumption $a-\beta_{-1}\iota(\alpha_1(a))\in I$. Then
\[
[a-\beta_{-1}\iota(\alpha_1(a))]_{1/2}=
\alpha_{1/2}(a)\in (\ker\alpha_{1/2})^\perp.
\]
However, in our case, for $x>0$,
\[
\ker \alpha_x =
\left\{f\in A\mid f\vert_{[0,e^{-x}]}=0\right\}.
\]
So,
\[
\left(\ker \alpha_x\right)^\perp =
C_0((0,e^{-x})) =
\left\{f\in A\mid \supp(f)\subseteq[0,e^{-x}]\right\}.
\]
We certainly cannot have
\[
\alpha_{1/2}\left(C_0((0,e^{-1}))\right)\subseteq
C_0(0,e^{-1/2}),
\]
because $\alpha_{1/2}$ is always an even function and $\alpha_{1/2}\ne 0$. For instance, $f(x)=\max\{x(1-ex),0\}$ satisfies
\[
f\in C_0(0,e^{-1})\quad\text{and}\quad \alpha_{1/2}(f)\not\in C_0(0,e^{-1/2}),
\]
because $\alpha_{1/2}(f)(-e^{-1/2}/2)=f(e^{-1}/2)>0$. So, we cannot have $I_1\subseteq I$ and the conclusion in Proposition \ref{prop:surjective_direct_limit}.(1) fails for the sub-lattice ordered group $\Z\subseteq \Z/2$ when $\alpha$ is not surjective.
\end{eg}

\begin{eg}\label{eg:Z2_not_embed}
Proposition \ref{prop:surjective_direct_limit}.(1) fails in the case $H=\Z\oplus \{0\}\subseteq \Z\oplus \Z=G$, even for surjective actions. Take any C*-dynamical system $(A,\alpha,\Z_+^2)$. Using the same notation as Proposition \ref{prop:surjective_direct_limit}, let $C$ and $B$ be the respective product dilations for $(A,\alpha,\Z_+\oplus \{0\})$ and $(A,\alpha,\Z_+^2)$. Let $J$ and $I$ be the respective unique maximal invariant $A$-boundary ideals in $C$ and $B$. As in Proposition \ref{prop:product_embed}.(1), identify $C\subseteq B$. Then, suppose for a contradiction that $J\subseteq I$.

As $H\cong \Z$, Proposition \ref{prop:envelope_Z} gives
\[
J=
\left\{x\in B\subseteq \prod_{\Z^2} A\; \middle|\; x_{(n,0)}\in (\ker \alpha_1)^\perp \text{ for all }n\in \Z, \text{ and }\lim_{n\to \infty} x_{(n,0)}=0\right\}.
\]
Therefore, if $a\in (\ker \alpha_1)^\perp$, we have
\[
x=\iota(a)-\beta_1^{-1}\iota\alpha_1(a) \in J.
\]
Given $\varepsilon>0$, there is a finite subset $F\subseteq \Z^2$ and an element $y\in I_F$ with $\|x-y\|<\varepsilon$. Since $\{I_F\mid F\subseteq G\text{ finite}\}$ is directed, we are free to enlarge $F$ so that $(1,1)\in F$. Set
\[
k=
\max\{m\mid (n,m)\in F\}.
\]
Then for $j\ge k$, we have
\begin{align*}
y_{(0,j)} &\in
\bigg(\bigcap_{\substack{(n,m)\in F\\(n,m-j)\not \le 0}}\ker \alpha_{(n,m-j)\vee 0}\bigg)^\perp\\ &=
\bigg(\bigcap_{\substack{(n,m)\in F\\ n> 0}}\ker \alpha_1^n\bigg)^\perp \subseteq 
\left(\ker \alpha_1\right)^\perp,
\end{align*}
so
\[
\dist(\alpha_2^j(a),(\ker \alpha_1)^\perp) \le
\|x-y\|<\varepsilon.
\]
This proves that for any commuting unital endomorphisms $\alpha_1,\alpha_2\in \End(A)$, and any $a\in (\ker \alpha_1)^\perp$, that
\begin{equation}\label{eq:almost_invariant_perp}
\lim_{j\to\infty}\dist(\alpha_2^j(a),(\ker \alpha_1)^\perp)=
\lim_{j\to \infty}\|\alpha_2^j(a)+(\ker \alpha_1)^\perp\|=0.
\end{equation}

However, the identity \eqref{eq:almost_invariant_perp} fails in general. Let $X=[0,1]\times [0,1]$ and $A=C(X)$. The two injective continuous maps $\varphi_1,\varphi_2:X\to X$ defined by
\[
\varphi_1(s,t)=\left(\frac{s}{2},t\right),\qquad
\varphi_2(s,t)=\left(\frac{s}{2},\frac{t}{2}\right)
\]
commute and define surjective $\ast$-endomorphisms $\alpha_i\in \End(A)$, where $\alpha_i(f)=f\circ \varphi_i$, for $i=1,2$. Then
\begin{align*}
(\ker \alpha_1)^\perp &=
C_0([0,1/2)\times [0,1]), \text{ and}\\
(\ker \alpha_2)^\perp &=
C_0([0,1/2)\times [0,1/2)).
\end{align*}
Pick any $f\in (\ker \alpha_1)^\perp$ with $f(s,t)=1$ whenever $s\in [0,3/8]$. Then we have $\alpha_2^j(f)(3/4,0)=1$ for any $j\ge 1$. So,
\[
\|\alpha_2^j(f)+(\ker \alpha_1)^\perp\| =
\left\|\alpha_2^j(f)\vert_{[1/2,1]\times [0,1]}\right\| \ge 1
\]
for all $j$, and \eqref{eq:almost_invariant_perp} does not hold. We conclude that Proposition \ref{prop:surjective_direct_limit}.(1) fails for the surjective system $(A,\alpha,\Z^2_+)=(C(X),\alpha,\Z^2_+)$, with the sub-lattice ordered group $H=\Z\oplus \{0\}\subseteq \Z^2$.
\end{eg}

\end{document}